\numberwithin{equation}{section}
\theoremstyle{plain}
\newtheorem{theorem}{Theorem}[section]
\newtheorem{lemma}[theorem]{Lemma}
\newtheorem{proposition}[theorem]{Proposition}
\newtheorem{corollary}[theorem]{Corollary}
\theoremstyle{definition}
\newtheorem{definition}[theorem]{Definition}
\newtheorem{example}[theorem]{Example}
\newtheorem{convention}[theorem]{Convention}
\newtheorem{remark}[theorem]{Remark}
\newtheorem*{remark*}{Remark}
\newtheorem{question}[theorem]{Question}
\let\c@equation\c@theorem  
\DeclareMathOperator{\GL}{GL}
\newcommand{\fg}{\mathfrak g}
\DeclareMathOperator{\gldim}{gldim}
\DeclareMathOperator{\Ext}{Ext}
\DeclareMathOperator{\pdim}{projdim}
\DeclareMathOperator{\fdim}{flatdim}
\DeclareMathOperator{\gr}{gr}
\DeclareMathOperator{\injdim}{injdim}
\DeclareMathOperator{\GKdim}{GKdim}
\newcommand{\fm}{\mathfrak{m}}
\newcommand\nat{{\rm($\natural$)}}
\begin{document}

\title[Hopf algebras of GK-dimension two]
{Hopf algebras of GK-dimension two \\
with vanishing Ext-group}

\author{D.-G. Wang, J.J. Zhang and G. Zhuang}

\address{Wang: School of Mathematical Sciences,
Qufu Normal University, Qufu, Shandong 273165, P.R.China}

\email{dgwang@mail.qfnu.edu.cn, dingguo95@126.com}

\address{Zhang: Department of Mathematics, Box 354350,
University of Washington, Seattle, Washington 98195, USA}

\email{zhang@math.washington.edu}

\address{Zhuang: Department of Mathematics, Box 354350,
University of Washington, Seattle, Washington 98195, USA}

\email{gzhuang@math.washington.edu}

\begin{abstract}
We construct and study a family of finitely generated Hopf algebra 
domains $H$ of Gelfand-Kirillov dimension two such that 
$\Ext^1_H(k,k)=0$. Consequently, we answer a question of 
Goodearl and the second-named author.
\end{abstract}

\subjclass[2000]{Primary 16P90, 16W30; Secondary 16A24, 16A55}


\keywords{Hopf algebra, Gelfand-Kirillov dimension, pointed, noetherian}


\maketitle


\setcounter{section}{-1}
\section{Introduction}
\label{xxsec0} 
Analysis of Hopf algebras of low Gelfand-Kirillov dimension (or 
GK-dimension for short) is an important step in understanding basic 
properties and algebraic structures of general Hopf algebras. Noetherian 
prime regular Hopf algebras of GK-dimension one were studied in 
\cite{BZ, Li, LWZ}. The study of noetherian Hopf algebra domains 
(or Hopf domains, for short) of GK-dimension two was started by Goodearl 
and the second-named author \cite{GZ} in which a classification was 
obtained under the extra hypothesis \nat\; [Theorem \ref{xxthm1.4}]. 
Let $k$ be an algebraically closed field of characteristic zero. Recall 
from \cite{GZ} that \nat\; means the following non-vanishing condition 
of the first $\Ext$-group

{\it $\quad$ \nat $:$ \; $\Ext^1_H(_Hk,_Hk)\neq 0$, where $_Hk$ denotes 
the trivial left $H$-module.}

\noindent
A well-known example of Hopf domains of GK-dimension two is the 
quantized enveloping algebra of the positive Borel subalgebra of 
${\mathfrak {sl}}_2(k)$, which is isomorphic to $A(1,q):= 
k\langle x^{\pm1},y\rangle/(xy-qyx)$ where $q$ is a nonzero
scalar and $\Delta(x)=x\otimes x, \Delta(y)= y\otimes1+ x\otimes y$
(defined in Example \ref{xxex1.1}). One can check the condition \nat\; by
verifying $\Ext^1_{A(1,q)}(k,k)=\begin{cases} k & {\text{if}} \quad 
q\neq 1\\ k\oplus k & {\text{if}} \quad q=1\end{cases}$. It is natural 
to ask if every Hopf domain of GK-dimension two satisfies \nat, 
see \cite[Question 0.3]{GZ}. Our first goal is to construct finitely 
generated noetherian Hopf domains $H$ of GK-dimension two 
such that $\Ext^1_H(k,k)=0$, or equivalently, that the condition \nat\; 
fails. As a consequence, \cite[Question 0.3]{GZ} is answered negatively. 

We investigate a family of Hopf algebras of GK-dimension two with 
vanishing $\Ext^1_H(k,k)$, denoted by $K(\{p_i\},\{q_i\},\{\alpha_i\},M)$ 
(see Section 2). A subfamily of which, denoted by $B(n,\{p_i\}_{1}^s,q,
\{\alpha_i\}_{1}^s)$, is a modification of 
$B(n,p_0,p_1,\cdots,p_s,q)$ introduced in \cite{GZ}. We conjecture that 
these $B(n,\{p_i\}_{1}^s,q,\{\alpha_i\}_{1}^s)$ are the only 
pointed Hopf domains of GK-dimension two that are missing 
from the list given in \cite[Theorem 0.1]{GZ}. The second goal of 
the paper is to prove the following theorem which provides an evidence
to the conjecture. We say that $H$ satisfies the hypothesis $\Omega'$ if

\begin{enumerate}
\item[]
{\it $\Omega':$ \; $H$ does not contain $A(1,q)$ as Hopf subalgebra 
for any $q$ being either }
\item[]
{\it a primitive 5th or a primitive 7th root of unity.}
\end{enumerate}

\begin{theorem}
\label{xxthm0.1}
Let $H$ be a Hopf domain of GK-dimension two such that
it is finitely generated by grouplike and skew primitive elements 
as an algebra and that $\Ext^1_H(k,k)=0$. If $H$ satisfies $\Omega'$,
then $H$ is isomorphic to $B(n,\{p_i\}_{1}^s,q,\{\alpha_i\}_{1}^s)$
with $\alpha_i\neq \alpha_j$ for some distinct integers $i$ and $j$.
\end{theorem}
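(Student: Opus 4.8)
The plan is to classify such $H$ by first reducing to a structural backbone built from the grouplikes and a single skew primitive, then bootstrapping to the full list. Because $H$ is generated by grouplike and skew primitive elements, its coradical is the group algebra $kG$ of the grouplike group $G$, and the assumption $\GKdim H = 2$ together with finite generation forces $G$ to be a finitely generated abelian group of rank $\le 2$ (this is the same kind of argument as in \cite{GZ}: an infinite non-abelian or too-large $G$ already pushes GK-dimension up). First I would run the coradical filtration: a skew primitive $y$ lives in $H_1$, so $\Delta(y) = y \otimes 1 + g \otimes y$ for some grouplike $g$, and conjugation by any grouplike $x$ scales $y$ up to an additive ambiguity coming from $(g-1)$-coboundaries. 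The vanishing $\Ext^1_H(k,k)=0$ is the crucial constraint: $\Ext^1_H(k,k)$ is dual to the space of primitives in $H/H(\text{augmentation ideal})^2$-type data, so its vanishing says, roughly, that there is no genuinely primitive direction and no grouplike that acts trivially enough to produce a $1$-cocycle — equivalently $G$ must be torsion-free of rank exactly $1$ is impossible (that gives $A(1,q)$ with a nonzero Ext), forcing $G \cong \ZZ$ with the adjoint action on $y$ having \emph{no} eigenvalue $1$ after the allowable normalization.

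Next I would pin down the relations. Writing $G = \langle x \rangle \cong \ZZ$, the element $y$ satisfies $x y x^{-1} = q y$ (after absorbing a coboundary, using that $1$ is not an eigenvalue) and $g = x^n$ for some $n \ge 1$ with $\Delta(y) = y\otimes 1 + x^n \otimes y$; a Hilbert-series / GK-dimension count shows $y$ cannot be transcendental over $k[x^{\pm 1}]$ in a free way without exceeding GK-dimension $2$ unless there is a relation, and the Hopf structure forces that relation to be of the form $y$ satisfies a polynomial whose "$\Delta$-primitive" shape is controlled — this is exactly the mechanism producing $B(n, p_0, \dots, p_s, q)$ in \cite{GZ}, and the modification to $B(n,\{p_i\}_1^s, q, \{\alpha_i\}_1^s)$ comes from allowing the extra scalar parameters $\alpha_i$ in the comultiplication of the higher skew primitives. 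Concretely I expect to produce skew primitive generators $y_1, \dots, y_s$ with $\Delta(y_i) = y_i \otimes 1 + x^{n/p_i} \otimes y_i + \alpha_i(\cdots)$ and relations $x y_i x^{-1} = q^{?} y_i$, $y_i^{p_i} = $ (lower terms in $x$), matching the defining presentation of $B(n,\{p_i\}_1^s, q, \{\alpha_i\}_1^s)$, and the condition $\Ext^1_H(k,k)=0$ translates into the statement that the eigenvalues line up so that $\alpha_i \ne \alpha_j$ for some $i \ne j$ — if all $\alpha_i$ were equal one recovers a case with nonzero $\Ext^1$, namely an honest $B(n,\dots)$ from \cite{GZ}'s list or an $A(1,q)$.

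The role of $\Omega'$ is to exclude sporadic exceptions where a primitive $5$th or $7$th root of unity $q$ would allow an alternative Hopf structure on the same underlying algebra — these are the known "extra" Hopf structures (coming from the small-order cyclotomic coincidences that make certain quantum-symmetrizer relations degenerate) that would otherwise contain a copy of $A(1,q)$ with $q$ of order $5$ or $7$ and spoil the uniqueness of the classification; ruling them out lets me assert that the presentation found above is the \emph{only} Hopf algebra structure, hence $H \cong B(n,\{p_i\}_1^s, q, \{\alpha_i\}_1^s)$.

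The main obstacle I anticipate is the second step: showing that the skew primitive elements, together with their powers and the grouplikes, actually generate $H$ with \emph{exactly} the expected relations and no more — i.e., a Hilbert-series or PBW-type argument bounding $\GKdim H$ from below to force the relations to be "tight", combined with a cohomological argument that no further skew primitive of a new type can appear once $\Ext^1_H(k,k) = 0$. Controlling the higher terms of the coradical filtration (showing it stabilizes after finitely many steps and that each graded piece is generated by products of the $y_i$) is where the GK-dimension-two hypothesis has to be used most delicately, likely via the associated graded Hopf algebra and a comparison with the known GK-dimension-two graded Hopf domains. Once the generators and relations are fixed, matching with the defining presentation of $B(n,\{p_i\}_1^s, q, \{\alpha_i\}_1^s)$ and verifying $\alpha_i \ne \alpha_j$ for some $i,j$ (otherwise $\Ext^1 \ne 0$) should be a bookkeeping check against the formulas in Section 2.
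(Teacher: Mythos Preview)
Your outline has the right large-scale shape (reduce to $C_0=k\mathbb{Z}$, analyze the skew primitives, identify the presentation with $B(n,\{p_i\},q,\{\alpha_i\})$, and read off $\alpha_i\neq\alpha_j$ from $\Ext^1=0$), but it has a genuine gap at the technical heart of the argument: you have no mechanism for establishing the \emph{cross-relations} among distinct skew primitives. Given two non-major skew primitives $y_1,y_2$ with $x^{-1}y_ix=q_iy_i$ and weights $x^{n_1},x^{n_2}$, one must prove $q_1^{n_2}q_2^{n_1}=1$ (and then that $y_2y_1-q_2^{n_1}y_1y_2=0$). A Hilbert-series or PBW bound on GK-dimension does not see this: the skew polynomial ring on $x^{\pm1},y_1,y_2$ with \emph{any} braiding $q_{ij}$ already has GK-dimension at most three, and imposing $y_i^{p_i}=y_1^{p_1}+\alpha_i(x^M-1)$ does not by itself force $q_{12}q_{21}=1$. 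The paper obtains this relation by passing to the finite-dimensional quotient $H/(y_1^{p_1},x^M-1)$, extracting from it a rank-two Nichols algebra $\mathcal{B}(V)$ of diagonal type with braiding matrix $q_{ij}=q_j^{n_i}$, and then invoking Heckenberger's classification of such Nichols algebras with finite arithmetic root system. That classification yields a short explicit list of possible $(q_{ij})$; after imposing the arithmetic constraints coming from $n_ip_i=M$ and $\gcd(n_1,n_2)=1$, the surviving cases are $q_{12}q_{21}=1$ together with four exceptional families tied to primitive $5$th and $7$th roots of unity (the algebras $N_5,N_7,N_{10},N_{21}$).

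This also corrects your reading of $\Omega'$. It is not about ruling out alternative Hopf structures or a uniqueness issue; it is precisely the device that discards the four exceptional Heckenberger cases above, since each $N_i$ contains $A(1,q)$ with $q$ a primitive $5$th or $7$th root of unity. Without the Nichols-algebra step you have nothing for $\Omega'$ to act on. Two smaller points: the $\alpha_i$ live in the \emph{algebra} relations $y_j^{p_j}=y_i^{p_i}+(\alpha_j-\alpha_i)(x^M-1)$, not in the comultiplication of the $y_i$; and the reduction to $C_0=k\mathbb{Z}$ is handled in the paper by separate classification results for $\GKdim C_0=0$ and $\GKdim C_0=2$, not by an $\Ext^1$ argument.
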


There are seven families of noetherian Hopf domains of GK-dimension two 
which satisfy $\Ext^1_H(k,k)\neq 0$ [Theorem \ref{xxthm1.4}]. As an 
immediate consequence, we have 

\begin{corollary}
\label{xxcor0.2}
Let $H$ be a Hopf domain of GK-dimension two such that it is finitely 
generated by grouplike and skew primitive elements. If $H$ satisfies 
$\Omega'$, then $H$ is isomorphic to either the algebra in Theorem 
\ref{xxthm0.1} or one of the algebras in the seven families listed 
in Theorem \ref{xxthm1.4}.
\end{corollary}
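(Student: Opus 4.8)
\emph{Outline of the proof.} Since $H$ is generated as an algebra by grouplike and skew primitive elements it is pointed, with coradical $kG$, $G=G(H)$; as $H$ is a domain $G$ is torsion free, and $kG\hookrightarrow H$ forces $\GKdim kG\le\GKdim H=2$, so $G$ is free abelian of rank $r\le2$ (as in \cite{GZ}). The cases $r=0$ and $r=2$ both lead to algebras with $\Ext^1_H(k,k)\ne0$, contradicting the hypothesis: if $r=0$ then $H$ is connected, the canonical surjection $U(P(H))\twoheadrightarrow H$ together with $\GKdim H=2$ forces $\dim_kP(H)=2$ and $H\cong U(P(H))$, an enveloping algebra in Theorem~\ref{xxthm1.4}; if $r=2$ then every skew primitive generator has finite order over $kG\cong k\ZZ^2$ (else $\GKdim H\ge3$), so $H$ is module finite over $k\ZZ^2$, and one checks as in \cite{GZ} that $\Ext^1_H(k,k)\ne0$. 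Hence $G\cong\ZZ$; write $G=\langle y\rangle$, $kG=k[y^{\pm1}]$.

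Reformulate the hypothesis. Writing $\mathfrak m=\ker\epsilon$, one has $\Ext^1_H(k,k)\cong\mathrm{Der}_\epsilon(H,k)$, the space of linear maps $\delta\colon H\to k$ with $\delta(1)=0$ and $\delta(ab)=\epsilon(a)\delta(b)+\delta(a)\epsilon(b)$, so that $\Ext^1_H(k,k)=0$ if and only if $\mathfrak m=\mathfrak m^2$. Choose skew primitive algebra generators $x_1,\dots,x_s$; after absorbing grouplike factors each $x_i$ may be taken $(1,y^{n_i})$-primitive, and (after a normalization; non-semisimple $y$-actions are disposed of as in \cite{GZ}) $yx_iy^{-1}=p_ix_i$ for scalars $p_i\in k^{\times}$. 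Applying $\delta\in\mathrm{Der}_\epsilon(H,k)$ to $yx_i=p_ix_iy$ gives $(1-p_i)\delta(x_i)=0$, so if some $p_i=1$ then $\delta(x_i)$ is unconstrained and $\mathrm{Der}_\epsilon(H,k)\ne0$; hence $p_i\ne1$ for all $i$, and each $x_i\in\mathfrak m^2$. On the other hand $\delta(y^{n_i}-1)=n_i\delta(y)$, so $\mathfrak m=\mathfrak m^2$ can hold only if a relation of $H$ expresses a nontrivial power $y^m$ in terms of the $x_i$. Since $1-y^m$ is $(1,y^m)$-primitive and the subalgebra generated by the $x_i$ can furnish a skew primitive element matching the $y$-weights only in the form $\Phi(x_i)$ with vanishing linear part, $H$ must satisfy a relation $\Phi(x_i)=\alpha_{i_0}(1-y^m)$ with $\alpha_{i_0}\ne0$ and $m\ne0$; in particular the $p_i$ are roots of unity, whose orders are constrained by $n$ and by the exponents appearing in $\Phi$.

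It remains to show that the whole relation set of $H$ coincides with that of $B(n,\{p_i\}_1^s,q,\{\alpha_i\}_1^s)$. The subalgebra $R\subseteq H$ generated by $x_1,\dots,x_s$ is a domain of GK-dimension one on which $y$ acts by the characters $p_i$; classifying its admissible presentations pins down the integers $n_i$, the commutation scalars in $x_ix_j=q^{\ast}x_jx_i$, and the power relations among the $x_i$ — all in terms of the single integer $n$ and the scalar $q$ — and matches the relations $\Phi(x_i)=\alpha_i(1-y^{m_i})$; comparing with Section~2 identifies $H$ first with a member of $K(\{p_i\},\{q_i\},\{\alpha_i\},M)$ and then, given the above constraints, with $B(n,\{p_i\}_1^s,q,\{\alpha_i\}_1^s)$. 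Here $\Omega'$ enters: in two exceptional branches of the root-of-unity bookkeeping one is otherwise led to a Hopf subalgebra isomorphic to $A(1,q)$ with $q$ a primitive $5$th or a primitive $7$th root of unity, and $\Omega'$ removes precisely these, leaving only branches that close. Finally the $\alpha_i$ cannot all coincide: were they equal, $H$ would be one of the algebras $B(n,p_0,\dots,p_s,q)$ of \cite{GZ}, which satisfies \nat\ and hence has $\Ext^1_H(k,k)\ne0$, contrary to hypothesis.

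The main obstacle is the third step: proving that the skew primitive generators cannot interact in any fashion not already present in $B$, and carrying through the arithmetic that forces the exponents $n_i$, the orders of the $p_i$, and the scalar $q$ into mutual compatibility while $H$ remains a domain of GK-dimension exactly two. It is exactly this arithmetic that fails to close for the two exceptional root orders, which is why $\Omega'$ must be imposed rather than derived.
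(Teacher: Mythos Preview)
Your outline has several genuine gaps. First, a framing error: Corollary~\ref{xxcor0.2} does \emph{not} assume $\Ext^1_H(k,k)=0$; that is the hypothesis of Theorem~\ref{xxthm0.1}. The paper observes the two statements are equivalent (since every algebra in Theorem~\ref{xxthm1.4} satisfies \nat), so you are implicitly proving Theorem~\ref{xxthm0.1} instead---but then you cannot describe the $r=0$ and $r=2$ cases as ``contradicting the hypothesis'' while simultaneously placing $H$ in Theorem~\ref{xxthm1.4}. More seriously, your claim that $G$ is free abelian is false: Proposition~\ref{xxprop1.5} shows that when $\GKdim kG=2$ the group can be the nonabelian $\ZZ\rtimes\ZZ$, and the paper's argument in that case (Lemma~\ref{xxlem1.6}) is that $H=C_0$, not that $H$ is module-finite over $k\ZZ^2$.

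The decisive gap is your third step, which you yourself flag as the main obstacle but do not actually carry out. Phrases like ``classifying its admissible presentations pins down\dots'' and ``root-of-unity bookkeeping'' are not arguments. The paper's proof here is substantial and nontrivial: one first treats the case of two skew primitives $y_1,y_2$ (Section~\ref{xxsec5}) by passing to a finite-dimensional quotient, extracting a rank-two Nichols algebra $\mathcal{B}(V)$, and invoking Heckenberger's classification of rank-two arithmetic root systems (Lemma~\ref{xxlem4.1}, Proposition~\ref{xxprop4.2}) to force $q_{12}q_{21}=1$ outside four exceptional families $N_5,N_7,N_{10},N_{21}$---not two---which is where $\Omega$ (implied by $\Omega'$) enters. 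Only then does further analysis (Lemma~\ref{xxlem5.4}) yield $y_1y_2=y_2y_1$, and Theorem~\ref{xxthm6.2} bootstraps pairwise to arbitrary $s$. None of this machinery---in particular Heckenberger's result, which is the essential external input---appears in your outline, and without it the claimed identification of $H$ with $B(n,\{p_i\}_1^s,q,\{\alpha_i\}_1^s)$ is unsupported.
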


It is unknown whether all finitely generated Hopf domains of GK-dimension 
two are generated by grouplike and skew primitive elements. If it is 
affirmative, Corollary \ref{xxcor0.2} provides a classification 
of finitely generated Hopf domains of GK-dimension two. Some basic 
properties of $B(n,\{p_i\}_{1}^s,q,\{\alpha_i\}_{1}^s)$ are listed in the 
next theorem. 

\begin{theorem}
\label{xxthm0.3}
Let $H$ be the algebra $B(n,\{p_i\}_{1}^s,q,\{\alpha_i\}_{1}^s)$. Then
\begin{enumerate}
\item
$H$ is finitely generated over its affine center.
\item
$\injdim H=2$.
\item
$\gldim H<\infty$ if and only if $\gldim H=2$ if and only if
$s=2$ and $\alpha_1\neq \alpha_2$.
\end{enumerate}
\end{theorem}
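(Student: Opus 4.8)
The plan is to work directly from an explicit presentation of $H = B(n,\{p_i\}_{1}^s,q,\{\alpha_i\}_{1}^s)$ as an iterated construction (a crossed product or smash product built out of a polynomial/Laurent-polynomial algebra on the skew primitive generators $y_1,\dots,y_s$ together with the grouplike $x^{\pm 1}$), and to exploit the fact that $H$ is an iterated Ore extension (localized in one variable). For part (a), I would identify the subalgebra of $H$ generated by suitable central elements: since each $y_i$ is a $(1,g_i)$-skew primitive with $g_i$ a power of $x$, the commutation relations are $x y_i = p_i y_i x$ and $y_i y_j = (\text{scalar})\, y_j y_i + (\text{lower terms involving } \alpha)$, so appropriate products $y_i^{m_i}$ and a power of $x$ (chosen to kill all the roots-of-unity scalars $p_i$) are central. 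One shows these generate an affine central subalgebra $Z_0$ and that $H$ is a finitely generated $Z_0$-module by a standard PBW/filtration argument: $H$ has a $k$-basis of ordered monomials $x^a y_1^{b_1}\cdots y_s^{b_s}$, and the $b_i$ can be reduced modulo $m_i$ and $a$ modulo the relevant exponent using the central elements, leaving finitely many monomial generators over $Z_0$. Hence $Z := Z(H) \supseteq Z_0$ is affine (noetherian + module-finite over an affine algebra forces $Z$ affine by Artin–Tate) and $H$ is module-finite over $Z$.

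For part (b), the cleanest route is to realize $H$ as an iterated Ore extension over $k[x^{\pm 1}]$: adjoining $y_1$ gives an Ore extension with an automorphism and a skew derivation, and each subsequent $y_i$ likewise. Each Ore extension raises injective dimension by exactly one when the skew derivation is suitably nontrivial, but for Laurent-type localizations one must be careful. Actually the more robust approach: $H$ is noetherian, AS-Gorenstein (this follows for Hopf algebras of finite injective dimension that are noetherian with the relevant homological conditions — one invokes the structural results underlying \cite{GZ}), and $\GKdim H = 2$; combined with the fact that $H$ has a normal regular sequence or that its associated graded ring (under the PBW filtration) is a nice noetherian AS-Gorenstein algebra of injective dimension $2$, one lifts $\injdim \gr H = 2$ to $\injdim H = 2$. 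Alternatively, since $H$ is module-finite over the affine center $Z$ by part (a) and $H$ is a maximal order / has no finite-dimensional quotients obstructing it, one computes $\injdim H = \injdim Z = \Kdim Z = 2$.

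For part (c), the key observation is that $\gldim H < \infty$ is equivalent to $H$ having finite global dimension equal to $\GKdim H = 2$ (by Auslander–Buchsbaum-type rigidity for these algebras, or because $\gldim H \ge \injdim H = 2$ always and $\gldim H \le \GKdim H$ when finite), so the content is the equivalence with "$s = 2$ and $\alpha_1 \ne \alpha_2$." One direction: if $s \ge 3$, or if $s = 2$ but $\alpha_1 = \alpha_2$, exhibit a nonzero finite-dimensional $H$-module (e.g. a quotient of $k$ by a suitable relation among the $y_i$'s forced by coincident $\alpha_i$, or the trivial module with a nonsplit self-extension pattern) whose projective dimension is infinite — concretely, compute a periodic resolution or show $\Ext^j_H(k,k) \ne 0$ for all $j$ via a Koszul-type complex that fails to be exact. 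The other direction: when $s = 2$ and $\alpha_1 \ne \alpha_2$, write down an explicit length-$2$ projective (Koszul) resolution of $_Hk$ — the hypothesis $\alpha_1 \ne \alpha_2$ is exactly what makes the relevant $2\times 2$ "connecting" matrix invertible so the complex $0 \to H \to H^2 \to H \to k \to 0$ is exact — giving $\pdim {_Hk} = 2$ and hence $\gldim H = 2$.

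\medskip

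\noindent The main obstacle I expect is part (c), specifically the "only if" direction: proving that $\gldim H = \infty$ whenever the $\alpha_i$ fail the genericity condition requires producing, for each bad case, an explicit module with infinite projective dimension and tracking the resolution far enough to see periodicity or nonvanishing. The case $s=2,\ \alpha_1=\alpha_2$ is the subtlest, since then $H$ looks "close to" the regular case and one must pinpoint exactly which $\Ext$ obstruction survives. Computing $\injdim H$ in part (b) without simply quoting a general AS-Gorenstein result for noetherian Hopf algebras is also somewhat delicate because of the Laurent-polynomial localization in the $x$ variable, but the iterated-Ore-extension bookkeeping handles it.
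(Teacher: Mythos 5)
Your parts (a) and (b) essentially track the paper's proof of Theorem \ref{xxthm2.7}: for (a) the paper takes the central subalgebra $T=k[y_1^{p_1},x^{\pm M}]$ and reads off module-finiteness from the PBW basis of Lemma \ref{xxlem2.1}, exactly as you propose; for (b) it simply quotes \cite[Theorem 0.1]{WZ} (affine noetherian PI Hopf algebras are AS--Gorenstein with injective dimension equal to GK-dimension), which is the ``structural result'' you gesture at. Be careful, though, with your alternative $\injdim H=\injdim Z=\Kdim Z$: module-finiteness over an affine center does not by itself control injective dimension (finite-dimensional algebras of infinite injective dimension are module-finite over $k$), so some Gorenstein input is unavoidable.

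Part (c) is where the real gaps are, in both directions. For the ``if'' direction your proposed resolution $0\to H\to H^2\to H\to k\to 0$ is not justified and its shape is suspect: the augmentation ideal requires the three generators $x-1,y_1,y_2$ (the relation only expresses $x^M-1$, not $x-1$, in terms of the $y_i$), and since $\Ext^1_H(k,k)=0$ is precisely the defining feature of these algebras, the ``Koszul complex on generators and relations'' heuristic and any minimal-resolution bookkeeping break down --- there is no $2\times 2$ connecting matrix whose invertibility you can point to. The paper avoids this entirely: when $s=2$ and $\alpha_1\neq\alpha_2$ it exhibits $H$ as a crossed product $A\star(\ZZ/(M))$ where $A=k_{q}[y_1,y_2]S^{-1}$ is a localization of a (skew) polynomial ring of global dimension $2$, and concludes $\gldim H=\gldim A=2$ from \cite[Theorem 7.5.6(iii)]{MR}. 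For the ``only if'' direction you say only ``exhibit a module with infinite projective dimension / a periodic resolution,'' which is not an argument; the actual mechanism is a reduction to singular graded subalgebras. If $\alpha_i=\alpha_j$, the Hopf subalgebra $K_0=k\langle x^{\pm1},y_i,y_j\rangle$ has $H$ free over it, hence $\gldim K_0\leq\gldim H$, and $K_0=Y[x^{\pm1};\sigma]$ where $Y=k[y_i,y_j]/(y_i^{p_i}-y_j^{p_j})$ is a connected graded domain of GK-dimension one not isomorphic to $k[y]$, so $\gldim Y=\infty$; if $s\geq 3$ with all $\alpha_i$ distinct, one reduces to a connected graded PI algebra of GK-dimension two that is not a skew polynomial ring and invokes \cite[Theorem 3.5]{StZ}. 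Without identifying this freeness-plus-singularity transfer, the ``only if'' half of (c) is missing. (Incidentally, the case $s=2$, $\alpha_1=\alpha_2$ that you flag as subtlest is the easiest one: it is exactly the singular-curve subalgebra case.)
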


Although many statements hold over arbitrary base field $k$, we assume 
that $k$ is algebraically closed of characteristic zero for simplicity. 
All vector spaces, algebras, tensor products and linear maps are taken 
over $k$. Usually $H$ denotes a Hopf algebra over $k$. Our basic 
reference for Hopf algebras is the book \cite{Mo} and we denote counit, 
coproduct, and antipode by the symbols $\epsilon$, $\Delta$, and $S$, 
respectively.

\subsection*{Acknowledgments}
The authors thank Nicol{\'a}s Andruskiewitsch for Remark \ref{xxrem4.3} 
and thank Ken Goodearl, Martin Lorenz and Don Passman for Proposition 
\ref{xxprop1.5} and their proofs. J.J. Zhang thanks Ken Brown and Ken 
Goodearl for many valuable conversations on the subject during the last 
few years. A part of research was done when J.J. Zhang was visiting 
Fudan University in Fall quarter of 2009, Spring quarter of 2010 
and Fall quarter of 2010. J.J. Zhang and G. Zhuang were supported 
by the US National Science Foundation. 

\section{Review and some classification results}
\label{xxsec1}
This section is divided into two parts. The first part is a review of 
the work on a classification of Hopf domains of 
GK-dimension two under the condition \nat. The second part concerns 
a classification of pointed Hopf domains of GK-dimension two with 
$\GKdim C_0\neq 1$ where $\{C_i\}$ denotes the coradical filtration of 
$H$.

\subsection{Goodearl-Zhang's work}
\label{xxsec1.1}

We collect some examples of Hopf algebras of GK-dimension two and 
state the main result of \cite{GZ}. Everything in this subsection is 
from \cite{GZ}. A nonzero element $y\in H$ is {\it skew 
primitive}, or more precisely, {\it $(1,g)$-primitive}, if 
$$\Delta(y)=y\otimes 1+ g\otimes y$$
where $g$ is a grouplike element in $H$. Such a $g$ (uniquely determined 
by $y$) is called the {\it weight} of $y$ and denoted by $\mu(y)$. Note 
that $(g-1)$ is always a skew primitive element of weight $g$.
A skew primitive is called {\it trivial} if it is of the form
$c(g-1)$ for $c\in k^{\times}:=k\setminus \{0\}$ and for a 
grouplike element $g$. In most cases, a skew primitive element
is meant to be nontrivial. 

\begin{example}
\label{xxex1.1} Let $n\in{\mathbb Z}$ and $q\in
k^{\times}$, and set $A= k\langle x^{\pm1},y \mid
xy=qyx\rangle$. There is a unique Hopf algebra structure on $A$
under which $x$ is grouplike and $y$ is skew primitive, with
$\Delta(y)= y\otimes1+ x^n\otimes y$. This Hopf algebra is denoted
by $A(n,q)$. By \cite[Construction 1.1]{GZ}, if $m\in{\mathbb Z}$
and $r\in k^{\times}$, then $A(m,r)\cong A(n,q)$ if and only if
either $(m,r)= (n,q)$ or $(m,r)= (-n,q^{-1})$.
\end{example}

\begin{example}
\label{xxex1.2} Let $n,p_0,p_1,\dots,p_s$ be positive integers and
$q\in k^{\times}$ with the following properties:
\begin{enumerate}
\item $s\ge2$ and $1<p_1< p_2< \cdots< p_s$;
\item $p_0\mid n$ and $p_0,p_1,\dots,p_s$ are pairwise relatively prime;
\item $q$ is a primitive $\ell$-th root of unity, where $\ell=
(n/p_0)p_1p_2\cdots p_s$.
\end{enumerate}
Set $m=p_1p_2\cdots p_s$ and $m_i= m/p_i$ for $i=1,\dots,s$. Choose an
indeterminate $y$, and consider the subalgebra $A= k[y_1,\dots,y_s]
\subset k[y]$ where $y_i= y^{m_i}$ for $i=1,\dots,s$. The $k$-algebra
automorphism of $k[y]$ sending $y\mapsto qy$ restricts to an automorphism
$\sigma$ of $A$. There is a unique Hopf algebra structure on
the skew Laurent polynomial ring $B= A[x^{\pm1};\sigma]$ such that $x$ is
grouplike and the $y_i$ are skew primitive, with $\Delta(y_i)=
y_i\otimes1 + x^{m_in}\otimes y_i$ for $i=1,\dots,s$.
The Hopf algebra $B$ has GK-dimension two. We shall denote
it $B(n,p_0,\dots,p_s,q)$ \cite[Construction 1.2]{GZ}.
\end{example}

\begin{example}
\label{xxex1.3} Let $n$ be a positive integer and set
$C=k[y^{\pm1}] \bigl[ x; (y^n-y)\frac{d}{dy} \bigr]$. There is a
unique Hopf algebra structure on $C$ such that $\Delta(y)=y\otimes y$
and $\Delta(x)= x\otimes y^{n-1}+ 1\otimes
x$. This Hopf algebra is denoted by $C(n)$. For $m,n\in{\mathbb
Z}_{>0}$, the Hopf algebras $C(m)$ and $C(n)$ are isomorphic if and
only if $m=n$ \cite[Construction 1.4]{GZ}.
\end{example}

Here is the main result of \cite{GZ}. An algebra is called
{\it affine} if it is finitely generated as an algebra over
$k$. The condition \nat\; is defined in the introduction.

\begin{theorem}
\label{xxthm1.4} \cite[Theorem 0.1]{GZ}
Let $H$ be a Hopf domain of GK-dimension two 
satisfying \nat. 
Then $H$ is noetherian if and only if $H$ is affine, 
if and only if $H$ is isomorphic to one of the following:
\begin{enumerate}
\item[(I)]
The group algebra $k\Gamma$, where $\Gamma$ is either
\begin{enumerate}
\item[(Ia)]
the free abelian group ${\mathbb Z}^2$, or
\item[(Ib)]
the nontrivial semidirect product ${\mathbb Z}\rtimes{\mathbb Z}$.
\end{enumerate}
\item[(II)]
The enveloping algebra $U({\mathfrak g})$, where ${\mathfrak g}$ is
either
\begin{enumerate}
\item[(IIa)]
the $2$-dimensional abelian Lie algebra over $k$, or
\item[(IIb)]
the Lie algebra over $k$ with basis $\{x,y\}$ and $[x,y]=y$.
\end{enumerate}
\item[(III)]
The Hopf algebras $A(n,q)$ from Example \ref{xxex1.1}, for $n\ge0$.
\item[(IV)]
The Hopf algebras $B(n,p_0,\dots,p_s,q)$ from Example \ref{xxex1.2}.
\item[(V)]
The Hopf algebras $C(n)$ from Construction from Example
\ref{xxex1.3}, for $n\ge2$.
\end{enumerate}
Aside from the cases $A(0,q)\cong A(0,q^{-1})$, the Hopf algebras
listed above are pairwise non-isomorphic. 
\end{theorem}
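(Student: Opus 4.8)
The statement is a triple equivalence together with a pairwise non-isomorphism claim, so the plan is to close a cycle of implications and then separate the families by invariants. The implication \emph{list}${}\Rightarrow{}$\emph{affine and noetherian} is a direct verification: the groups in (I) are torsion-free polycyclic of Hirsch length two, so their group algebras are affine noetherian domains of $\GKdim$ two; the enveloping algebras in (II) are noetherian with $\GKdim=\dim\fg=2$; $A(n,q)$ and $C(n)$ are iterated Ore extensions of (Laurent) polynomial rings, hence affine noetherian domains, and a filtration count gives $\GKdim=2$; and $B(n,p_0,\dots,p_s,q)=A[x^{\pm1};\sigma]$ is a skew-Laurent extension of the affine commutative domain $A$ of $\GKdim=1$, again affine noetherian of $\GKdim=2$. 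Conversely I would prove the single structural statement that an \emph{affine or noetherian} Hopf domain of $\GKdim$ two satisfying \nat\ lies in (I)--(V); feeding this into the verification closes all equivalences, since \emph{affine}${}\Rightarrow{}$\emph{list}${}\Rightarrow{}$\emph{noetherian} and \emph{noetherian}${}\Rightarrow{}$\emph{list}${}\Rightarrow{}$\emph{affine}.

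The core is thus the classification, and I would begin by extracting the consequences of \nat. Identifying $\Ext^1_H(k,k)$ with $(\fm/\fm^2)^{*}$, where $\fm=\ker\epsilon$, the hypothesis yields a nonzero $\epsilon$-derivation $\delta\colon H\to k$; restricted to the group $G=G(H)$ of grouplikes it is a homomorphism $G\to(k,+)$, and it detects a nontrivial tangent direction coming either from $G$ or from a nontrivial skew-primitive. Since $H$ is a domain, $G$ is torsion-free, and $kG\subseteq H$ is a Hopf subalgebra that is a commutative domain of $\GKdim\le2$, so $G$ has finite Hirsch length at most two. Using that a noetherian (or affine) Hopf domain is faithfully flat over its Hopf subalgebras and that Gelfand--Kirillov dimension is additive across the resulting quotient $\bar H$, I would run the classification by the trichotomy $\GKdim kG\in\{0,1,2\}$.

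If $\GKdim kG=0$ then $G=\{1\}$ and $H$ is connected, so by the structure theory of connected Hopf algebras of finite $\GKdim$ in characteristic zero $H=U(\fg)$ for $\fg$ the Lie algebra of primitives; $\GKdim H=2$ forces $\dim\fg=2$, giving (II). If $\GKdim kG=2$ then $\GKdim\bar H=0$, so the quotient is trivial and $H=kG$; the only torsion-free groups of Hirsch length two with $kG$ a domain are $\mathbb Z^2$ and the Klein-bottle group $\mathbb Z\rtimes\mathbb Z$, giving (I). The substantial case is $\GKdim kG=1$, i.e.\ $G\cong\mathbb Z=\langle x\rangle$, where the genuinely quantum examples arise. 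Here I would analyze the nontrivial skew-primitives $y$, their weights $\mu(y)=x^{n}$, and the skew-commutation $xy=qyx$ forced by grouplike conjugation: when $q$ is not a root of unity a single skew-primitive generates everything and one obtains $A(n,q)$; when $q$ is a root of unity one must control the commutative subalgebra generated by the skew-primitives, which takes the form $k[y^{m_1},\dots,y^{m_s}]\subset k[y]$, and matching the Hopf structure and the $\GKdim=2$ bound to the numerical conditions of Example~\ref{xxex1.2} yields $B(n,p_0,\dots,p_s,q)$; the remaining additive possibility, where a skew-primitive interacts with $x$ through a derivation rather than a pure twist, produces $C(n)$.

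The main obstacle is this last case, and within it the proof that $H$ is genuinely \emph{generated} by $x^{\pm1}$ and the skew-primitives, i.e.\ that no hidden generators survive beyond the subalgebra one can already see; this requires showing the coradical filtration is tame enough to control the associated graded and that the domain property together with the $\GKdim=2$ bound forces equality between $H$ and the obvious subalgebra. It is precisely here that \nat\ must be used to pin down the skew-primitive weights and the order of $q$, excluding the degenerate configurations that (as this paper shows) violate \nat. Finally, pairwise non-isomorphism follows from discrete invariants: the isomorphism type and Hirsch length of $G(H)$ separate (I)--(III) from one another and from (V); commutativity, the Lie algebra of primitives, and whether $H$ is PI (with its PI-degree in the root-of-unity cases) separate the remaining families; and within $A(n,q)$ and $C(n)$ the explicit criteria already recorded in Examples~\ref{xxex1.1} and~\ref{xxex1.3} apply. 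The only coincidence surviving this analysis is $A(0,q)\cong A(0,q^{-1})$, which is exactly the stated exception.
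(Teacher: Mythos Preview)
This theorem is not proved in the present paper at all: it is quoted verbatim from \cite[Theorem~0.1]{GZ} as background, and the surrounding text (``Here is the main result of \cite{GZ}'') makes clear that no argument is supplied here. There is therefore no proof in the paper to compare your proposal against.

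That said, a few remarks on your outline as a standalone sketch. Your trichotomy on $\GKdim kG(H)$ is natural, but you silently assume $H$ is pointed when you pass from $G(H)=\{1\}$ to ``$H$ is connected'' and when you treat $kG(H)$ as if it were the coradical; Theorem~\ref{xxthm1.4} does not assume pointedness, so this is a genuine gap that must be closed (in \cite{GZ} the condition \nat\ is exploited through the commutator quotient $H/[H,H]$ and an associated Hopf quotient of GK-dimension one, rather than through the coradical directly). Your treatment of the $\GKdim kG=1$ case is also too thin: the sentence ``matching the Hopf structure and the $\GKdim=2$ bound to the numerical conditions of Example~\ref{xxex1.2} yields $B(n,p_0,\dots,p_s,q)$'' hides essentially all of the work, including the proof that $H$ is generated by grouplikes and skew primitives, which you yourself flag as the main obstacle but do not address. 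Finally, your claim that $kG$ is commutative because it is a domain of $\GKdim\le 2$ needs justification (it uses \cite[Lemma~4.5]{GZ} or an equivalent), and the additivity of GK-dimension across a faithfully flat Hopf quotient is not a general fact and would itself require an argument in this setting.
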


It would be convenient if every noetherian Hopf algebra domain of 
GK-dimension two satisfied \nat, but the algebras defined in Section 2 
are counterexamples. 

\subsection{Partial results on pointed Hopf domains of GK-dimension two}
\label{xxsec1.2}
In this subsection we start a classification of pointed Hopf 
domains $H$ of GK-dimension strictly less than three. Note 
that we do not assume that $H$ satisfies the condition \nat\;
in this subsection. Since $H$ is pointed, the 
coradical $C_0$ of $H$ is a group algebra $kG$ where $G$ consists 
of all grouplike elements in $H$. Since $kG$ is a subalgebra of $H$, 
$\GKdim kG<3$. Then $\GKdim kG$ is either 0, or 1, or 2. We consider 
these three subcases.

\subsubsection{$\GKdim C_0=2$}
The following proposition was proposed by Goodearl and the 
second-named author and proved by Goodearl, Passman and 
Lorenz. We thank them for sharing their proofs with us. 

Recall that the {\it centeralizer} of an element $g$ in a group 
$\Gamma$ is defined to be
$${\bf C}_\Gamma(g)=\{ h\in \Gamma\;|\; hg=gh\}.$$
The centralizer of a subset in $\Gamma$ is defined similarly.
The {\it finite conjugate center} of a group $\Gamma$ is defined to be
\cite[p. 115]{Pa}
$$\Delta(\Gamma)=\{x\in \Gamma\;|\; [\Gamma:{\bf C}_\Gamma(x)]<\infty\}.$$
Please do not confuse $\Delta(\Gamma)$ with the coproduct $\Delta$ and 
with conventions $\Delta({\mathcal B}(V))$ and $\Delta^{+}({\mathcal B}(V))$
introduced and used locally in Section \ref{xxsec4}. 

\begin{proposition}[Goodearl-Zhang]
\label{xxprop1.5} 
If the group algebra $k\Gamma$ is an affine domain
of GK-dimension two, then $\Gamma$ is either ${\mathbb Z}^2$
or the nontrivial semidirect product ${\mathbb Z}\rtimes
{\mathbb Z}=\langle x,y\;| xy=y^{-1}x\rangle$. Namely, $k\Gamma$
is in Theorem \ref{xxthm1.4}(I). 
\end{proposition}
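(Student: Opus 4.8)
The plan is to exploit the structure theory of polycyclic-by-finite groups together with Gelfand--Kirillov dimension bounds. First I would observe that if $k\Gamma$ is a domain then $\Gamma$ is torsion-free, and if $k\Gamma$ is affine (finitely generated as a $k$-algebra) then $\Gamma$ is finitely generated as a group. Next, the hypothesis $\GKdim k\Gamma = 2$ forces $\Gamma$ to have polynomial growth of degree $2$; by Gromov's theorem $\Gamma$ is virtually nilpotent, and being finitely generated torsion-free with a nilpotent finite-index subgroup, $\Gamma$ is polycyclic. The Hirsch length $h(\Gamma)$ then equals $\GKdim k\Gamma = 2$ (using the standard fact that for polycyclic-by-finite groups the GK-dimension of the group algebra coincides with the Hirsch length), so $\Gamma$ is a torsion-free group of Hirsch length $2$.

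The second step is to classify torsion-free polycyclic groups of Hirsch length $2$. Such a group has a normal subgroup $N \cong \mathbb{Z}$ with $\Gamma/N$ torsion-free of Hirsch length $1$, hence $\Gamma/N \cong \mathbb{Z}$; thus $\Gamma$ is an extension $1 \to \mathbb{Z} \to \Gamma \to \mathbb{Z} \to 1$. Every such extension splits (since $\mathbb{Z}$ is free), so $\Gamma \cong \mathbb{Z} \rtimes_\phi \mathbb{Z}$ where $\phi \in \Aut(\mathbb{Z}) = \{\pm 1\}$. This gives exactly two groups: $\mathbb{Z}^2$ (when $\phi = 1$) and the nontrivial semidirect product $\mathbb{Z} \rtimes \mathbb{Z} = \langle x, y \mid xyx^{-1} = y^{-1}\rangle$ (when $\phi = -1$). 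Both are indeed torsion-free and have group algebras that are affine domains of GK-dimension two, so no further elimination is needed.

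An alternative, more self-contained route that avoids invoking Gromov's theorem would be to argue directly: since $k\Gamma$ is an affine domain of GK-dimension two, a result on the finite conjugate center $\Delta(\Gamma)$ (for which the excerpt has set up notation) shows that $\Delta(\Gamma)$ is a finitely generated abelian group of Hirsch length at most $2$, and that $\Gamma/\Delta(\Gamma)$ is finite when $\Gamma$ is not abelian-by-finite in a controlled way; combined with the fact that a noetherian domain group algebra forces $\Gamma$ to be polycyclic-by-finite (otherwise $k\Gamma$ is not noetherian), one again reduces to Hirsch length $2$ and finishes as above. I expect the main obstacle to be justifying cleanly that $\GKdim k\Gamma = 2$ forces $\Gamma$ polycyclic of Hirsch length exactly $2$ — i.e., ruling out finitely generated groups of quadratic growth that are not virtually nilpotent without quoting Gromov, and pinning down the precise relationship between GK-dimension and Hirsch length; once that structural input is in hand, the extension-theoretic classification is routine. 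Since the excerpt attributes the proposition to Goodearl, Passman, and Lorenz, I anticipate the actual proof leverages Passman's book on group rings for exactly these polycyclic-by-finite and $\Delta(\Gamma)$ arguments.
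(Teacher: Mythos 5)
Your overall strategy (growth of degree two $\Rightarrow$ virtually abelian $\Rightarrow$ classify torsion-free groups of Hirsch length $2$) can be made to work, but the step you treat as routine is precisely the content of the proposition, and as stated it has a gap. You assert that a torsion-free polycyclic group of Hirsch length $2$ ``has a normal subgroup $N\cong\mathbb{Z}$ with $\Gamma/N$ torsion-free of Hirsch length $1$.'' No justification is given, and the general principle this seems to rest on -- that one can always peel a normal infinite cyclic subgroup off a torsion-free polycyclic group leaving a torsion-free quotient, i.e.\ that such groups are poly-(infinite cyclic) -- is \emph{false}: the Hantzsche--Wendt group is torsion-free polycyclic of Hirsch length $3$ with finite abelianization, hence admits no surjection onto $\mathbb{Z}$ and no such series. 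So the claim is special to Hirsch length $2$ and needs an argument. The paper's proof (due to Lorenz) supplies exactly this missing argument: it takes $A=\Delta(\Gamma)\cong\mathbb{Z}^2$, shows $G=\Gamma/A$ embeds in $\{\pm 1\}$ via the determinant of the conjugation action -- the key point being that every nontrivial $g\in G$ fixes a nontrivial element $\gamma^n\in A$ and therefore has Jordan form $\mathrm{diag}(1,\det g_A)$ with $\det g_A=-1$ -- and then extracts the presentation $\langle x,y\mid xy=y^{-1}x\rangle$ by a careful choice of generators, ruling out the torsion case $(xy)^2=1$ along the way. Your extension-splitting and $\Aut(\mathbb{Z})=\{\pm 1\}$ observations are fine once such a series is in hand, but obtaining it is where the work lies.

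A secondary inaccuracy: it is not a standard fact that $\GKdim k\Gamma$ equals the Hirsch length for polycyclic-by-finite groups. The correct statement is the Bass--Guivarc'h formula $\GKdim k\Gamma=\sum_k k\cdot r_k$, which exceeds the Hirsch length $\sum_k r_k$ whenever the nilpotent finite-index subgroup is nonabelian (the discrete Heisenberg group has Hirsch length $3$ but $\GKdim k\Gamma=4$). In your situation the formula does force $r_1=2$ and all higher $r_k=0$, so $\Gamma$ is virtually $\mathbb{Z}^2$ and your conclusion $h(\Gamma)=2$ stands, but the justification should be routed through Bass--Guivarc'h rather than the false equality. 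Your alternative sketch via $\Delta(\Gamma)$ correctly anticipates the paper's actual route, but it too stops short of the determinant/fixed-point analysis that pins down $\Gamma/\Delta(\Gamma)\le\mathbb{Z}/2$ and hence the two possible groups.
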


The following nice proof is due to Lorenz.

\begin{proof}[Proof of Proposition \ref{xxprop1.5} (Lorenz)]
First of all, since $k\Gamma$ is an affine domain, $\Gamma$ is
finitely generated and torsionfree. Since $\GKdim (k\Gamma)=2$, 
$\Gamma$ is abelian-by-finite with Hirsch number $2$. 

Let $A=\Delta(\Gamma)$ be the finite conjugate center of $\Gamma$. 
By \cite[Lemma 1.6, p. 117]{Pa} $A$ is abelian, and hence $A\cong 
{\mathbb Z}^2$. In this case $A$ is also the largest abelian 
subgroup of $\Gamma$. Since $A$ is abelian and $\Gamma$ is 
abelian-by-finite, we have $A={\bf C}_{\Gamma}(A)$
and $G:=\Gamma/A$ is finite. If $G=\{1\}$ 
then $\Gamma={\mathbb Z}^2$. So it remains to consider that case that 
$G\neq \{1\}$. Let $f$ be the homomorphism
$$G\hookrightarrow \GL(A) \longrightarrow \{\pm 1\}$$
defined by
$$g\longmapsto g_A\longmapsto \det g_A$$
where $g_A\in \GL(A)$ is given by the conjugation of $g$ on $A$.
We claim that $f$ is an isomorphism, or equivalently, $\det g_A=-1$
for all $1\neq g\in G$. Write $g=\gamma A$ with 
$\gamma\not\in A$. Since $G$ is finite, there is an $n$ such that
$g^n=1$, or $\gamma^n\in A$. But $\gamma^n\neq 1$ since $\Gamma$
is torsionfree. So $g_A$ has a nontrivial fixed point 
$\gamma^n\in A$. This implies that $g_A$ has Jordon canonical 
form $\begin{pmatrix} 1&0\\0 & \det g_A\end{pmatrix}$, and 
hence we must have $\det g_A=-1$. This proves the claim. 
Pick any $x\in \Gamma$ whose image is $g$ that generates $G$.
Note that $x^2\in Z(\Gamma)$, the center of $\Gamma$. Since $x_A(=g_A)$ 
has eigenvalues $1$ and $-1$, $Z(\Gamma)$ has rank 1, or equivalently, 
is infinite cyclic. Hence the subgroup $\langle x, Z(\Gamma)\rangle$ 
of $\Gamma$ is infinite cyclic too. Without loss of generality we 
assume that $\langle x, Z(\Gamma)\rangle$ is generated by $x$. 
Consequently, $Z(\Gamma)$ is generated by $x^2$. Furthermore, 
$A/Z(\Gamma)$ is infinite cyclic as well, with a generator $y$.
Since $x_A$ has the eigenvalue $-1$ in $y$, we have 
$$xyx^{-1}=y^{-1}x^{2r}$$
for some $r$. Replacing $y$ by $yx^{2s}$ for a suitable $s$, we may
assume that $r=0$ or $r=-1$. If $r=-1$, then $(xy)^2=1$, which
contradicts to the fact $\Gamma$ is torsionfree. Thus we must
have $r=0$ and $xyx^{-1}=y^{-1}$. Since $\Gamma$ is
generated by $x$ and $y$, $\Gamma=\langle x,y\;|\; xy=y^{-1}x\}
={\mathbb Z}\rtimes {\mathbb Z}$.
\end{proof}

\begin{lemma}
\label{xxlem1.6}
Let $H$ be a pointed Hopf domain. If
$\GKdim H<\GKdim C_0+1$, then $H=C_0$.
\end{lemma}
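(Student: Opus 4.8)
\textbf{Proof proposal for Lemma \ref{xxlem1.6}.}

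The plan is to show that if $H$ is a pointed Hopf domain with $\GKdim H < \GKdim C_0 + 1$, then the coradical filtration $\{C_i\}$ does not actually grow, i.e.\ $C_1 = C_0$, and hence $H = C_0$. First I would recall that since $H$ is pointed, $C_0 = kG$ where $G$ is the group of grouplike elements, and that the coradical filtration is an algebra filtration whose associated graded $\gr H = \bigoplus_{i\ge 0} C_i/C_{i-1}$ is a graded Hopf algebra with $(\gr H)_0 = kG$. Crucially, since $H$ is a domain, so is $\gr H$ (filtered domains have domain associated graded), and moreover $\GKdim \gr H \le \GKdim H$ by the standard fact that GK-dimension does not increase when passing to the associated graded of a filtered algebra (when the filtration is reasonable, e.g.\ locally finite after tensoring appropriately; here one uses that $H$ is finitely generated or works directly with the filtration). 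So it suffices to prove the statement for $\gr H$, i.e.\ we may assume $H$ is graded with $H_0 = kG$.

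Next I would analyze the degree-one part. By the theory of pointed Hopf algebras, $C_1/C_0$ decomposes as a $kG$-bimodule into a sum of one-dimensional pieces spanned by the images of skew primitive elements; equivalently, there exist nontrivial $(1,g)$-primitive elements whose classes span $C_1/C_0$. Suppose $C_1 \ne C_0$, so there is at least one such nontrivial skew primitive $y$, say $(1,g)$-primitive with $g \in G$. Then the subalgebra of $H$ generated by $g^{\pm 1}$ (inside $C_0$, which is a domain, so $g$ has infinite order unless $g=1$; if $g=1$ then $y$ is primitive) together with $y$ is either a polynomial-type or skew-polynomial-type subalgebra. The key point is a GK-dimension count: the subalgebra $k\langle Z(G)^{\pm}, y\rangle$ — or more carefully the Hopf subalgebra generated by $g$ and $y$ — will have GK-dimension at least $\GKdim C_0 + 1$, because adjoining the skew primitive $y$ (which is not a scalar multiple of $g-1$, hence genuinely new modulo $C_0$, and satisfies a relation like $gy = \lambda yg$ with the $y$-powers $1, y, y^2, \dots$ linearly independent over $C_0$ in $\gr H$) strictly increases GK-dimension by one. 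This contradicts $\GKdim H < \GKdim C_0 + 1$.

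The step I expect to be the main obstacle is making the GK-dimension increase rigorous in the full generality allowed: one must be careful that the relevant skew primitive $y$ together with an appropriate infinite-order grouplike (or with $1$, in the primitive case) really generates a subalgebra of GK-dimension $\GKdim C_0 + 1$ and not less — this requires knowing that the powers $y^i$ remain linearly independent over $C_0$, which in the graded setting $\gr H$ follows because $y^i \in C_i \setminus C_{i-1}$ (a product of $i$ elements of strict filtration degree $1$ has strict filtration degree $i$ in a domain), so $\{y^i\}$ are linearly independent modulo lower filtration and the subalgebra $C_0[y]$ (or $C_0[y;\sigma]$) is a free $C_0$-module of infinite rank, giving GK-dimension exactly $\GKdim C_0 + 1$. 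One also needs the reduction to the graded case to preserve the domain hypothesis and the strict inequality on GK-dimension, and to handle the possibility that $G$ is finite (then $\GKdim C_0 = 0$ and a primitive element gives a polynomial ring $k[y]$, forcing $\GKdim H \ge 1 = \GKdim C_0 + 1$, again a contradiction). Assembling these, $C_1 = C_0$; since in a pointed Hopf algebra $C_1 = C_0$ forces $C_n = C_0$ for all $n$ (the coradical filtration is generated in degree $\le 1$ in the sense that $C_{n} = \bigwedge^{n+1} C_0$ and the wedge stabilizes once it stabilizes at the first step), we conclude $H = \bigcup_n C_n = C_0$.
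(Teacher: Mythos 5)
The central step of your argument --- that in $\gr_C H$ a product of $i$ elements of strict coradical degree $1$ has strict degree $i$, so that the powers $y^i$ are linearly independent over $C_0$ and $C_0[y;\sigma]$ is free of infinite rank --- fails, and it fails exactly in the case that carries all the content of the lemma. The premise it rests on, namely that the associated graded of the coradical filtration of a domain is again a domain, is false: for $H=A(1,q)$ with $q$ a primitive $p$th root of unity ($p\ge 2$), the element $y^p$ is again skew primitive, hence lies in $C_1$, so the image $\bar y$ of $y$ in degree one of $\gr_C H$ satisfies $\bar y^{\,p}=0$ even though $H$ is a domain. (The true implication goes the other way: if the associated graded is a domain then so is the filtered algebra; compare how Lemma \ref{xxlem1.8}(b) has to argue indirectly, via $\gr_{\fm}K=U(\fg)$, to show $\gr_C H$ is a domain in the connected case.) Consequently you cannot conclude that adjoining a nontrivial skew primitive raises the GK-dimension by one from a formal filtration-degree count: when the commutator $\lambda$ of $y$ is a primitive $p$th root of unity, the powers of $y$ collapse back into low coradical degree, and this is precisely the situation your proof never engages with.

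The paper's proof is organized around that dichotomy. It invokes \cite[Theorem 0.2]{WZZ1} --- a genuine lower-bound-of-growth theorem, not a formal associated-graded argument --- to conclude that under the hypothesis $\GKdim H<\GKdim C_0+1$ the commutator $\lambda$ of any nontrivial skew primitive must be a primitive $p$th root of unity with $p\ge 2$; it then normalizes $g^{-1}yg=\lambda y$, identifies the Hopf subalgebra generated by $g^{\pm1}$ and $y$ with $A(1,\lambda)$ using \cite[Lemma 4.5]{GZ} (a noncommutative domain has GK-dimension at least two), deduces that $y^{p}$ is a \emph{nontrivial} skew primitive with commutator $1$, and applies \cite[Theorem 0.2]{WZZ1} once more to $y^{p}$ to obtain the contradiction $\GKdim H\ge \GKdim C_0+1$. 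To repair your proposal you would essentially have to reprove that theorem; the claim that the $y^i$ stay independent over $C_0$ is where its difficulty is hiding, not a routine verification.
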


\begin{proof} Suppose on contrary that $H\neq C_0$. Then there is a 
nonzero skew primitive element $y\in C_1\setminus C_0$ such that 
$\Delta(y)=y\otimes 1+g\otimes y$ and $g^{-1}yg=\lambda y+\tau(g-1)$ 
for some $\lambda, \tau \in k$ \cite[Lemma 2.5]{WZZ1}. By 
\cite[Theorem 0.2]{WZZ1}, the hypothesis that $\GKdim H<\GKdim C_0+1$
implies that $\lambda$ is a $p$th primitive root of unity for 
some $p\geq 2$. Since $\lambda\neq 1$, we may assume 
$g^{-1}yg-\lambda y=0$ by \cite[Lemma 2.5]{WZZ1}. Since the Hopf
subalgebra $K$ generated by $g^{\pm 1}$ and $y$ is a noncommutative domain,
$\GKdim K\geq 2$ by \cite[Lemma 4.5]{GZ}, whence $\GKdim K=2$ and $K$ 
is isomorphic to $A(1, \lambda)$ defined in Example \ref{xxex1.1}. As 
a consequence, $y^{p}$ is a nontrivial skew primitive element. Since 
$g^{-p} y^{p} g^{p}=y^{p}$, \cite[Theorem 0.2]{WZZ1} implies that 
$\GKdim H\geq \GKdim C_0+1$, a contradiction.
\end{proof}

\begin{theorem}
\label{xxthm1.7}
Let $H$ be an affine pointed Hopf domain of 
GK-dimension strictly less than three. If the coradical $C_0$ is of  
GK-dimension two, then $H$ is isomorphic to $k\Gamma$ where $\Gamma$ 
is either ${\mathbb Z}^2$ or ${\mathbb Z}\rtimes {\mathbb Z}$ as given 
in Theorem \ref{xxthm1.4}(I). 
\end{theorem}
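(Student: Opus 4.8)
\textbf{Proof proposal for Theorem \ref{xxthm1.7}.}

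The plan is to combine Proposition \ref{xxprop1.5} with Lemma \ref{xxlem1.6}. Since $H$ is a pointed Hopf domain, its coradical $C_0$ equals the group algebra $k\Gamma$ where $\Gamma$ is the group of grouplike elements of $H$. The hypothesis says $C_0$ has GK-dimension two, and $C_0$ is a subalgebra of $H$, so $C_0$ is an affine domain — wait, a priori $\Gamma$ need not be finitely generated just because $H$ is affine, so the first thing I would check is that $\Gamma$ is finitely generated. This should follow because $H$ is affine of GK-dimension two and $k\Gamma$ is a subalgebra of GK-dimension two; a domain of GK-dimension two cannot contain a properly ascending chain of subalgebras of GK-dimension two, and more concretely one can argue that $\Gamma$, being a subgroup of the units of an affine noetherian-ish situation, is finitely generated. (Alternatively, one invokes that $H$ affine of finite GK-dimension forces $C_0 = k\Gamma$ to have finite GK-dimension, and a group algebra $k\Gamma$ of finite GK-dimension over a finitely generated base has $\Gamma$ polycyclic-by-finite, hence finitely generated.) Once $\Gamma$ is finitely generated, $k\Gamma$ is an affine domain of GK-dimension two.

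Next I would apply Proposition \ref{xxprop1.5} directly to $k\Gamma = C_0$: this gives that $\Gamma$ is either $\mathbb{Z}^2$ or $\mathbb{Z}\rtimes\mathbb{Z}$, so in particular $C_0 = k\Gamma$ already has GK-dimension two and is one of the algebras in Theorem \ref{xxthm1.4}(I). Then I would apply Lemma \ref{xxlem1.6} with this $H$: we are given $\GKdim H < 3 = \GKdim C_0 + 1$, so the lemma forces $H = C_0 = k\Gamma$. Combining the two conclusions gives exactly the statement of the theorem.

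The main obstacle is the first step — verifying that $\Gamma$ is finitely generated (equivalently that $k\Gamma$ is affine) so that Proposition \ref{xxprop1.5} applies. If $\Gamma$ were not known to be finitely generated a priori, I would instead run a version of Lorenz's argument directly: a group $\Gamma$ with $\GKdim k\Gamma = 2$ is abelian-by-finite of Hirsch number $2$ even without the affine hypothesis (this is the Gromov/Bass–Guivarc'h polynomial-growth side of the story applied to the GK-dimension of the group algebra), its finite conjugate center $\Delta(\Gamma)$ is abelian of Hirsch number $2$ hence $\cong\mathbb{Z}^2$, and then finite generation of $\Gamma$ follows because $\Gamma/\Delta(\Gamma)$ is finite (as $\Delta(\Gamma) = \mathbf{C}_\Gamma(\Delta(\Gamma))$ is the largest abelian subgroup and has finite index). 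After that the rest of Lorenz's proof of Proposition \ref{xxprop1.5} goes through verbatim. So the theorem is essentially immediate from the two preceding results, modulo this mild finiteness bookkeeping, and I would keep the written proof to a few lines: reduce to Proposition \ref{xxprop1.5} for the structure of $\Gamma$, then invoke Lemma \ref{xxlem1.6} to conclude $H = C_0$.
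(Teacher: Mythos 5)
Your proposal is correct and matches the paper's proof, which simply combines Lemma \ref{xxlem1.6} and Proposition \ref{xxprop1.5}. The finite-generation bookkeeping you worry about disappears if you apply the two results in the opposite order: Lemma \ref{xxlem1.6} needs no affineness hypothesis and immediately gives $H=C_0$ from $\GKdim H<3=\GKdim C_0+1$, after which $C_0=H$ is affine by the hypothesis on $H$ and Proposition \ref{xxprop1.5} applies directly to it.
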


\begin{proof} The assertion follows from Lemma \ref{xxlem1.6} and 
Proposition \ref{xxprop1.5}.
\end{proof}

\subsubsection{$\GKdim C_0=0$}
In this case $C_0=k$ and $H$ is a connected Hopf algebra. 
Part (a) of the  following lemma is due to Le Bruyn (unpublished).

\begin{lemma} 
\label{xxlem1.8} Let $H$ be a connected Hopf algebra and 
$K$ be the associated graded Hopf algebra $\gr_C H$ with 
respect to the coradical of $H$.
\begin{enumerate}
\item
$H$ is a domain.
\item
$K$ is a connected graded Hopf algebra that is a domain
with $\GKdim K\leq \GKdim H$.
\item
Let $\fm$ be the graded maximal ideal of $K$. Then 
$\gr_\fm K$ is a universal enveloping algebra $U(\fg)$
where ${\mathfrak g}$ is a graded Lie algebra generated in degree one with
dimension no more than $\GKdim K$.
\end{enumerate}
\end{lemma}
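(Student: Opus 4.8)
\textbf{Proof proposal for Lemma \ref{xxlem1.8}.}

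The plan is to establish the three parts in the logical order (a), (b), (c), with part (a) being the technical heart that the rest builds on. For part (a), I would exploit the coradical filtration $k = C_0 \subseteq C_1 \subseteq \cdots$ of the connected Hopf algebra $H$. The associated graded object $\gr_C H = \bigoplus_{i\ge 0} C_i/C_{i-1}$ is a connected graded Hopf algebra, and it is a standard fact (see e.g. Montgomery \cite{Mo}, or Foissy's work on connected graded Hopf algebras) that because $\ch k = 0$ and $\gr_C H$ is connected graded, it is generated in degree one as an algebra; more importantly, since $C_1$ is spanned by $1$ and primitive elements, $\gr_C H$ is a \emph{cocommutative up to the primitives} object whose degree-one part is primitive, and by Milnor--Moore the subalgebra of $\gr_C H$ generated in degree one is an enveloping algebra $U(L)$ for a graded Lie algebra $L$ concentrated in positive degrees. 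An enveloping algebra of a Lie algebra over a field of characteristic zero is a domain (it has a PBW basis and an associated graded that is a polynomial ring). To deduce that $H$ itself is a domain, I would use the fact that $H$ admits an exhaustive filtration (the coradical filtration) whose associated graded $\gr_C H$ is a domain: if $\gr F$ of a filtered ring $F$ is a domain, then $F$ is a domain, by the usual leading-term argument. This gives (a); this is also the step I expect to be the main obstacle, since pinning down exactly why $\gr_C H$ is generated in degree one (hence an enveloping algebra, hence a domain) requires the characteristic-zero structure theory of connected graded Hopf algebras, and one must be careful that the coradical filtration is indeed an algebra filtration compatible with $\Delta$ — this uses that $\Delta(C_n) \subseteq \sum_{i+j=n} C_i \otimes C_j$, a basic property of the coradical filtration.

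For part (b), set $K = \gr_C H$. It is connected graded by construction, and it is a Hopf algebra because the coradical filtration is a Hopf filtration. That $K$ is a domain follows from the argument just given (indeed it \emph{is} the enveloping-algebra-type object, or contains one cofinally). For the GK-dimension inequality $\GKdim K \le \GKdim H$: since $K = \gr_C H$ is the associated graded of $H$ with respect to an exhaustive ascending filtration, and GK-dimension does not increase under passage to the associated graded of a filtered algebra (a standard fact, see Krause--Lenagan; for filtrations that are not finite-dimensional in each piece one restricts to finitely generated subalgebras and uses that the filtration induced on each is still exhaustive), we get $\GKdim K \le \GKdim H$. One should note the filtration here has finite-dimensional pieces $C_i$ when $H$ is locally finite, which holds for the coradical filtration of any Hopf algebra.

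For part (c), let $\fm$ be the graded maximal ideal (augmentation ideal) of the connected graded domain $K$ and form $\gr_\fm K = \bigoplus_{n\ge 0} \fm^n/\fm^{n+1}$. Since $K$ is connected graded and generated by $\fm$, the ring $\gr_\fm K$ is again connected graded; the key point is that it is \emph{primitively generated} — the classes of elements of $\fm/\fm^2$ are primitive in $\gr_\fm K$ because the comultiplication on $K$ satisfies $\Delta(\fm) \subseteq \fm\otimes K + K\otimes \fm$ and the "cross terms" $\fm\otimes\fm$ land in filtration degree contributing nothing to the primitive part at the associated-graded level. By the Milnor--Moore theorem (valid since $\ch k = 0$), a connected graded cocommutative Hopf algebra that is primitively generated is the enveloping algebra $U(\fg)$ of its Lie algebra of primitives $\fg = P(\gr_\fm K)$, which is graded and generated in degree one. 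Finally, $\GKdim U(\fg) = \dim_k \fg$ when $\fg$ is finite dimensional, and $\GKdim \gr_\fm K \le \GKdim K \le \GKdim H$ by the same associated-graded estimate as in (b), so $\dim_k \fg \le \GKdim K$; in particular $\fg$ is finite dimensional. This completes the proof. The only subtlety to be careful about in (c) is checking cocommutativity of $\gr_\fm K$ and that its primitives are exactly the degree-one part generating it, both of which follow from tracking the Hopf structure through the two successive associated-graded constructions.
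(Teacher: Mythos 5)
The central claim on which your part (a) rests --- that $K=\gr_C H$ is generated in degree one, hence primitively generated, hence (essentially) an enveloping algebra --- is not a standard fact and is false in general. The degree-one part of $K$ is $C_1/C_0$, the space of primitive elements of $H$, and a connected Hopf algebra need not be generated by its primitives: for instance the connected Hopf algebras of GK-dimension three constructed in \cite{Zh} have algebra generators $x,y,z$ with $x,y$ primitive and $\Delta(z)=z\otimes 1+1\otimes z+y\otimes x$, so $z\in C_2\setminus C_1$ and its symbol in $K_2$ does not lie in the subalgebra generated by $K_1$. Consequently your argument shows at best that the subalgebra of $K$ generated in degree one is a domain, which yields neither that $K$ nor that $H$ is a domain. (Indeed, if $\gr_C H$ were always generated in degree one, part (c) of the lemma would be superfluous, and the dimension count in the proof of Theorem \ref{xxthm1.9} showing $K\cong\gr_\fm K$ in the GK-dimension-two case would be unnecessary.)

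The paper's proof runs in the opposite order, (c) then (b) then (a), precisely to avoid this gap. Part (c) is where your ``generated in degree one'' idea is actually valid: $\gr_\fm K$ is generated by $\fm/\fm^2$ tautologically, its degree-one part is primitive because $\Delta(a)-a\otimes 1-1\otimes a\in\fm\otimes\fm$ for $a\in\fm$, and Milnor--Moore then gives $\gr_\fm K=U(\fg)$ with $\fg$ graded, generated in degree one, of dimension at most $\GKdim K$; the paper simply cites \cite[Proposition 3.4(a)]{GZ} for all of this. Then (b): $U(\fg)$ is a domain, and the $\fm$-adic filtration of $K$ is separated because $K$ is connected graded (so $\fm^i\subseteq K_{\geq i}$ and $\bigcap_i\fm^i=0$), whence the leading-symbol argument lifts the domain property from $\gr_\fm K$ to $K$; the inequality $\GKdim K\leq\GKdim H$ is \cite[Lemma 6.5]{KL}, as you say. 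Finally (a) follows by one more leading-symbol lift, from $K=\gr_C H$ to $H$ along the exhaustive coradical filtration. Your parts (b) and (c) contain essentially the right ingredients; the error is in attributing degree-one generation to $\gr_C H$ rather than to $\gr_\fm K$, which is exactly what forces the two-step associated-graded argument.
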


\begin{proof} 
(c) This is a consequence of \cite[Proposition 3.4(a)]{GZ}. 

(b) Since $U(\fg)$ is a domain (for any $\fg$), so is $\gr_\fm K$. 
By definition, $K$ is a connected ${\mathbb N}$-graded Hopf algebra. 
Then $\bigcap_i \fm_K=0$, and so the filtration $\{\fm_K^i\}_{i\geq 0}$
is exhaustive and separated. Therefore $K$ is a domain.
By \cite[Lemma 6.5]{KL}, $\GKdim K\leq \GKdim H$. 

(a) This is a consequence of (b). 
\end{proof}

\begin{theorem}
\label{xxthm1.9}
Let $H$ be a connected Hopf domain of GK-dimension strictly
less than three. Then $H$ is isomorphic to $U(\mathfrak h)$ for a 
Lie algebra ${\mathfrak h}$ of dimension no more than $2$. If
$\GKdim H\geq 2$, then $H$ is isomorphic to Hopf algebras in Theorem 
\ref{xxthm1.4}(II). 
\end{theorem}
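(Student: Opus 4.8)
\textbf{Proof proposal for Theorem \ref{xxthm1.9}.}
The plan is to induct on $\GKdim H$, using Lemma \ref{xxlem1.8} to reduce to the associated graded situation and then to identify the Lie algebra explicitly. First I would dispose of the low-dimensional cases: if $\GKdim H = 0$ then $H$ is finite-dimensional and connected, hence $H = k = U(0)$; if $\GKdim H = 1$ then by Lemma \ref{xxlem1.8}(b) the graded Hopf domain $K = \gr_C H$ has $\GKdim K \le 1$, and by part (c) the Lie algebra $\fg$ with $\gr_\fm K = U(\fg)$ is generated in degree one with $\dim \fg \le 1$, so $\fg$ is either $0$ or the $1$-dimensional abelian Lie algebra; a dimension count then forces $K = k[x]$ and, lifting back, $H \cong k[x] = U(k)$ (a $1$-dimensional Hopf domain is easily seen to be a polynomial ring). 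The substantive case is $\GKdim H = 2$.

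For $\GKdim H = 2$, apply Lemma \ref{xxlem1.8}: $K = \gr_C H$ is a connected graded Hopf domain with $\GKdim K \le 2$, and $\gr_\fm K = U(\fg)$ with $\fg$ a graded Lie algebra generated in degree one of dimension at most $2$. There are three possibilities for $\fg$: zero, one-dimensional, or two-dimensional abelian (a graded Lie algebra generated in degree one and of dimension $2$ must have all generators in degree one, so the bracket lands in degree $2$ but there is nothing there, forcing it abelian — unless one generator has higher degree, which cannot happen if both live in degree one; I would check this case analysis carefully). If $\fg = 0$ or is $1$-dimensional, then $\GKdim \gr_\fm K \le 1$, hence $\GKdim K \le 1 < 2$, contradiction. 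So $\fg$ is the $2$-dimensional abelian Lie algebra and $\gr_\fm K = k[x_1,x_2]$ is a polynomial ring in two degree-one variables; since $\GKdim$ is preserved under taking associated graded with respect to an exhaustive separated filtration whose Rees ring is nice, $K$ itself has $\GKdim K = 2$ and is a connected graded Noetherian domain whose degree-one part is $2$-dimensional. Now I would argue that $K$, being a connected graded Hopf domain with Hilbert series forced to be $(1-t)^{-2}$ by the identification of $\gr_\fm K$, must already be commutative: the primitively generated connected Hopf algebra on a $2$-dimensional primitive space is $U(\fg_0)$ for $\fg_0 = C_1/C_0 \oplus \cdots$; more precisely $K$ is generated by $C_1 \cap K = $ its degree-one piece (connected graded Hopf algebras are generated by primitives in char $0$ via Milnor--Moore applied gradedly), so $K = U(\fg_K)$ with $\fg_K$ the space of primitives, which is $2$-dimensional, and a $2$-dimensional graded Lie algebra concentrated in degree one is abelian; hence $K = k[x_1,x_2]$.

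Having pinned down $K = \gr_C H = k[x_1,x_2]$, I would lift to $H$ itself. By the Milnor--Moore theorem (valid since $\ch k = 0$ and $H$ is connected, so cocommutative on its primitives is not needed — rather, connectedness gives that $H$ is generated by its primitives $P(H)$, by \cite[Theorem 5.6.5 or similar]{Mo}), $H = U(\fh)$ where $\fh = P(H)$ is the Lie algebra of primitive elements. The equality $\gr_C H = k[x_1,x_2]$ forces $\dim \fh = 2$, so $\fh$ is one of the two $2$-dimensional Lie algebras: the abelian one, giving $H = U(\text{IIa})$, or the one with basis $\{x,y\}$, $[x,y] = y$, giving $H = U(\text{IIb})$. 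Both appear in Theorem \ref{xxthm1.4}(II), so we are done. The main obstacle I anticipate is the step asserting that a connected graded Hopf domain $K$ with $\GKdim K \le 2$ and $\gr_\fm K = U(\fg)$ abelian of dimension $2$ is itself a polynomial ring: one must be careful that $K$ is generated in degree one (this is where connectedness and the structure of $\gr_\fm K$ enter, perhaps via \cite[Proposition 3.4]{GZ} again or a direct filtered argument) and that there is no room for a nontrivial relation given the Hilbert series constraint. Once generation in degree one is established, the rest is the standard Milnor--Moore identification.
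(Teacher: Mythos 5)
Your overall skeleton matches the paper's: pass to $K=\gr_C H$ and then to $\gr_\fm K=U(\fg)$ via Lemma \ref{xxlem1.8}, force $\fg$ to be two-dimensional abelian in degree one, conclude $K\cong U(\fg)$ is a polynomial ring generated in degree one, and lift to $H=U(\fh)$ via \cite[Theorem 5.6.5]{Mo}. The one place where you genuinely diverge is the lower bound $\dim\fg\geq 2$: the paper gets $\dim\fg_1\geq\dim K_1=\dim C_1/C_0\geq\dim\fh=2$ by citing \cite[Theorem 1.1]{Zh} (every Hopf domain of GK-dimension $\geq 2$ contains $U(\fh)$ with $\dim\fh=2$), whereas you derive it from a growth estimate. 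Your route can be made to work, but not by invoking ``$\GKdim$ is preserved under taking associated graded'': Lemma \ref{xxlem1.8}(b) (via \cite[Lemma 6.5]{KL}) only gives $\GKdim K\leq\GKdim H$, so ``$\GKdim K\leq 1<2$'' is not yet a contradiction. What actually closes the argument is the chain of containments $\fm^n\subseteq K_{\geq n}$ and $C_i C_j\subseteq C_{i+j}$: if $\dim\fg\leq 1$ then $\dim K/\fm^n\leq n$, hence $\sum_{i<n}\dim K_i=\dim C_{n-1}\leq n$, hence any finite-dimensional $V\subseteq C_N$ has $\dim V^n\leq Nn+1$ and $\GKdim H\leq 1$, contradicting $\GKdim H=2$. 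Spelled out this way your estimate is a legitimate, self-contained substitute for the citation of \cite{Zh}.

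The genuine gap is your repeated appeal to Milnor--Moore. Both claims --- ``connected graded Hopf algebras are generated by primitives in char $0$ via Milnor--Moore applied gradedly'' and ``connectedness gives that $H$ is generated by its primitives'' --- are false without cocommutativity, and if either were true the whole theorem would be immediate with no GK-dimension analysis at all. The correct mechanism, which is exactly what the paper uses and which you half-identify in your ``main obstacle'' paragraph, is degree-one generation read off from $\fm/\fm^2$: since $\fg=\fg_1$ and $(\fm/\fm^2)_1=K_1$, one gets $K_{\geq 2}=\fm^2$, so $\fm=K_1+\fm^2$ and graded Nakayama gives that $K$ is generated by $K_1$; hence $\fm^n=K_{\geq n}$ and $K\cong\gr_\fm K=U(\fg)=k[x_1,x_2]$. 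It is this degree-one generation of $\gr_C H$ --- not connectedness per se --- that shows $H$ is generated by $C_1=k\oplus P(H)$, i.e.\ by primitives, after which \cite[Theorem 5.6.5]{Mo} yields $H=U(P(H))$ with $\dim P(H)=\dim K_1=2$, and both two-dimensional Lie algebras give algebras listed in Theorem \ref{xxthm1.4}(II). With the Milnor--Moore appeals replaced by this argument and the growth inequality made precise, your proof is correct.
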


\begin{proof} To avoid triviality we assume that $\GKdim H\geq 2$.
By \cite[Theorem 1.1]{Zh}, $H$ contains a Hopf subalgebra
$U(\mathfrak h)$ for some 2-dimensional Lie algebra $\mathfrak h$.

Retain the notation from Lemma \ref{xxlem1.8}, we have 
$\gr_{\fm} K=U(\mathfrak g)$ and 
$$\dim {\mathfrak g}\geq \dim {\mathfrak g}_1\geq \dim K_1
=\dim C_1/C_0\geq \dim {\mathfrak h}=2.$$
By Lemma \ref{xxlem1.8}(b,c), $\dim {\mathfrak g}<3$, 
whence $\dim {\mathfrak g}=\dim {\mathfrak g}_1=\dim K_1=2$. Since 
${\mathfrak g}$ is graded and generated in degree 1, it must be 
a 2-dimensional abelian Lie algebra. Therefore $\gr_{\fm}(K)=
U(\mathfrak g)=U({\mathfrak g}_1)$ is commutative and generated 
by ${\mathfrak g}_1$. Since $K$ is connected graded and since 
$\dim K_1=2$, $K_1={\mathfrak g}_1=\fm/\fm^2$. This implies that 
$K\cong \gr_{\fm}(K)=U(\mathfrak g)$. As a consequence, $H$ is 
generated by primitive elements. The assertion follows from 
\cite[Theorem 5.6.5]{Mo}.
\end{proof}

\subsubsection{$\GKdim C_0=1$} 
\label{xxsec1.2.3}
The most difficult case is when $\GKdim C_0=1$. Since we assume 
$H$ is a domain, so is $C_0$. Thus $C_0$ is commutative 
\cite[Lemma 4.5]{GZ}. By \cite[Proposition 2.1]{GZ},
$C_0=kG$ where $G$ is abelian of rank one. 

\begin{lemma}
\label{xxlem1.10} Let $H$ be a pointed Hopf algebra that is 
finitely generated as algebra over $k$. Then $C_0$ is 
finitely generated.
\end{lemma}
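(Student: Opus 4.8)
\textbf{Proof proposal for Lemma \ref{xxlem1.10}.}
The plan is to exploit the coradical filtration $\{C_i\}$ together with the finite generation of $H$. Write $H = k\langle a_1,\dots,a_r\rangle$, and observe that, because the coradical filtration is exhaustive, each generator $a_j$ lies in some $C_{n_j}$; set $N = \max_j n_j$, so all generators lie in $C_N$. Since the coradical filtration is a coalgebra filtration, $C_i\cdot C_j\subseteq C_{i+j}$, and hence products of the $a_j$ of length $\le m$ lie in $C_{mN}$. This does \emph{not} immediately bound the filtration degree of an arbitrary element of $H$, so the real content is to pass to the associated graded object and argue there.

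The key step is to consider $\gr_C H = \bigoplus_{i\ge 0} C_i/C_{i-1}$, which is a graded Hopf algebra whose degree-zero piece is $C_0 = kG$. The surjection $H\twoheadrightarrow \gr_C H$ is not an algebra map, but the images $\bar a_j \in \gr_C H$ (in degrees $n_j$) still generate $\gr_C H$ as an algebra: any $c\in C_i$ is a polynomial in the $a_j$ modulo $C_{i-1}$, because $H$ is generated by the $a_j$ and the filtration is compatible with multiplication. So $\gr_C H$ is a finitely generated graded algebra with $(\gr_C H)_0 = kG$. Now I would invoke the standard fact that if a connected-over-a-base graded algebra $R = \bigoplus_{i\ge0} R_i$ is finitely generated as an algebra, then $R_0$ is finitely generated as an algebra over $k$: indeed, writing $R = R_0\langle \bar a_1,\dots,\bar a_r\rangle$ with each $\bar a_j$ homogeneous of positive degree (after discarding those of degree zero, which already lie in $R_0$ — but here some $a_j$ may map into $C_0$, and those generate a finitely generated subalgebra of $R_0$ to begin with), the degree-zero component $R_0$ is spanned by those monomials in the $\bar a_j$ of total degree zero, i.e.\ by products of the degree-zero generators alone, so $R_0 = k\langle \bar a_j : \deg \bar a_j = 0\rangle$ is finitely generated. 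Thus $kG = C_0$ is a finitely generated algebra.

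The main obstacle is the step asserting that $\gr_C H$ is generated as an algebra by the images of a finite generating set of $H$: one must be careful that the coradical filtration on $H$ restricts correctly and that a polynomial expression $a = P(a_1,\dots,a_r)$ for $a\in C_i$ can be chosen so that each monomial of $P$ lands in $C_i$ (discarding higher-filtration cancellation). This is where the coalgebra-filtration property $C_i C_j \subseteq C_{i+j}$ is used: group the monomials of $P$ by the filtration degree they land in; the top-degree part of $a$ in $\gr_C H$ is then a polynomial in the $\bar a_j$, and one finishes by downward induction on $i$. Once this is in hand, the lemma reduces to the purely graded statement above, which is elementary. (One could alternatively phrase the whole argument with the Rees algebra of the coradical filtration in place of $\gr_C H$; this avoids worrying about whether $H\to\gr_C H$ is multiplicative, at the cost of slightly more bookkeeping.)
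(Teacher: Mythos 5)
There is a genuine gap, and it sits exactly where you flagged ``the main obstacle'': the claim that $\gr_C H$ is generated as an algebra by the images of a finite algebra generating set of $H$ is false, and so is the degree-zero consequence you extract from it. The problem is cancellation: if $a=P(a_1,\dots,a_r)$, a monomial of weighted degree $D$ may actually lie in $C_l$ for some $l<D$, and a sum of monomials of weighted degree $D$ may drop even further; the class of such an element in $C_l/C_{l-1}$ is simply not visible inside the subalgebra of $\gr_C H$ generated by the symbols $\bar a_j$ (whose product lives in degree $D$, where it is zero). Your proposed downward induction never reduces this defect, because re-expressing the offending combination as another polynomial in the $a_j$ can reproduce the same phenomenon. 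A minimal counterexample: in Sweedler's $4$-dimensional Hopf algebra $H_4=k\langle g,y\rangle$ with $g$ grouplike and $y$ a $(1,g)$-primitive, set $z_1=y+g$ and $z_2=y-g$. Then $H_4=k\langle z_1,z_2\rangle$ and both generators lie in $C_1\setminus C_0$, so your argument would conclude $C_0=k\langle \bar a_j:\deg\bar a_j=0\rangle=k$; but $C_0=k\oplus kg$. Indeed $\bar z_1=\bar z_2=\bar y$ in $C_1/C_0$, so the relation $z_1-z_2=2g$ is invisible in $\gr_C H_4$, and $\gr_C H_4$ is not generated by $\bar z_1,\bar z_2$. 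The paper's own algebras exhibit the same effect in positive degree: $y_j^{p_j}$ has weighted degree $p_j\ge 2$ but is a nontrivial skew primitive, hence lies in $C_1$, so $\gr_C B$ is not generated by the images of $x^{\pm1},y_1,\dots,y_s$.

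The fix must use the coalgebra structure of the generating set, not just the multiplicative filtration. The paper's route: a finite generating set is contained in a finite-dimensional subcoalgebra $V$ (fundamental theorem of coalgebras), which is pointed; let $F$ be Takeuchi's free pointed Hopf algebra on $V$, so there is a surjection of Hopf algebras $F\to H$. By Takeuchi's Theorem~35 the coradical $F_0$ is generated by the coradical of $V$, hence $G(F)$ is a finitely generated group, and by \cite[Corollary 5.3.5]{Mo} the surjection carries $G(F)$ onto $G(H)$, so $G(H)$ is finitely generated. (A hands-on variant: $H=\sum_m V^m$, each $V^m$ is the image of the coalgebra map $V^{\otimes m}\to H$, and since $(V^{\otimes m})_0=V_0^{\otimes m}$, every grouplike of $H$ is a product of the finitely many grouplikes of $V$.) Either way, the essential input is the control of the coradical of a product or image of coalgebras, which your purely filtered-algebra argument cannot supply.
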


\begin{proof} 
Suppose $H$ is generated by a finite dimensional subcoalgebra $V$. 
Then $V$ is a pointed coalgebra. Let $F$ be the free (pointed) Hopf 
algebra generated by $V$, which is defined in \cite{Ta2}. By the 
universal property of $F$, there is a Hopf algebra sujective map 
$F\to H$. It follows from \cite[Theorem 35]{Ta2} that the coradical 
$F_0$ of $F$ is generated by the coradical of $V$. Hence $F_0$ is 
finitely generated. By \cite[Corollary 5.3.5]{Mo}, $G(H)$ is a quotient
of $G(F)$. Therefore $G(H)$ is finitely generated and the assertion
follows.
\end{proof}

See Theorem \ref{xxthm6.2} for a result in this subcase. 

To conclude this section we state a result of \cite{WZZ2}. An algebra 
$A$ is called {\it PI} if it satisfies a polynomial identity and $A$ 
is called {\it locally PI} if every affine subalgebra of $A$ is PI.

\begin{theorem}
\label{xxthm1.11}\cite[Theorem 7.2]{WZZ2} 
Let $H$ be an affine pointed Hopf domain such that $\GKdim H<3$
and that $C_0=k{\mathbb Z}$. If $H$ is not PI, then $H$ is isomorphic 
to one of following
\begin{enumerate}
\item
The Hopf algebra $A(n,q)$ of Example \ref{xxex1.1} where $n>0$ and
$q$ is not a root of unity.
\item
The Hopf algebra $C(n)$ of Example \ref{xxex1.3} for $n\geq 2$.
\end{enumerate}
\end{theorem}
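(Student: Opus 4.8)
The plan is to work through the coradical filtration of $H$, locate a single skew primitive element, show that $H$ is generated as a Hopf algebra by the grouplikes together with that element, recognise $H$ from its one defining relation, and finally let the non-PI hypothesis discard the degenerate possibilities. First I would pin down the GK-dimension: since $C_0=k\mathbb{Z}$ is commutative, Lemma~\ref{xxlem1.6} shows that $\GKdim H<\GKdim C_0+1=2$ would force $H=C_0$, which is commutative and hence PI; as $H$ is assumed non-PI, this gives $\GKdim H=2$. Write $C_0=k\langle x\rangle$ with $x$ a free grouplike generator, so $G(H)\cong\mathbb{Z}$. Because $H\neq C_0$ there is a nontrivial skew primitive $y\in C_1\setminus C_0$, and, exactly as in the proof of Lemma~\ref{xxlem1.6} via \cite[Lemma 2.5]{WZZ1}, one may arrange $\Delta(y)=y\otimes 1+x^n\otimes y$ and $x^{-1}yx=\lambda y+\tau(x^n-1)$ for some $n\in\mathbb{Z}$, $\lambda\in k^{\times}$, $\tau\in k$. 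The primitive case $n=0$ I would dispose of by a separate, easier analysis; for $n\neq 0$, after replacing $x$ by $x^{-1}$ and adjusting $y$ (using $A(n,q)\cong A(-n,q^{-1})$ and the corresponding symmetry of $C(n)$), we may assume $n\geq 1$.

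Next I would read off the defining relation. If $\lambda\neq 1$, replacing $y$ by $y+\frac{\tau}{\lambda-1}(x^n-1)$ gives $x^{-1}yx=\lambda y$, so $K:=k\langle x^{\pm1},y\rangle$ is a noncommutative Hopf subdomain; by \cite[Lemma 4.5]{GZ} $\GKdim K\geq 2$, hence $\GKdim K=2$ and $K\cong A(n,\lambda^{\pm1})$. If $\lambda=1$ and $\tau\neq 0$, rescaling $y$ to make $\tau=1$ turns the relation into the one defining $C(n+1)$, with the grouplike and the skew primitive generators interchanged as in Example~\ref{xxex1.3}, so $K\cong C(n+1)$, again of GK-dimension $2$. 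If $\lambda=1$ and $\tau=0$, then $x$ and $y$ commute, $K$ is commutative, and one argues that $H$ is then PI, contradicting the hypothesis; so this branch is empty. Thus $H$ contains a copy $K$ of $A(n,\lambda^{\pm1})$ or of $C(n+1)$.

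The crux is to prove $H=K$, i.e.\ that no skew primitive of any weight beyond $y$, and no element of $C_2\setminus C_1$, is needed to generate $H$. For this I would use that $H$ is faithfully flat, indeed free, over the Hopf subalgebra $K$ (pointed Hopf algebras being free over their Hopf subalgebras): the quotient coalgebra $\overline{H}:=H/HK^{+}$ is then pointed, and since every element of $K$ maps into $k\overline{1}$, any nontrivial grouplike or skew primitive of $\overline{H}$ lifts to an element $h\in H\setminus K$; adjoining $h$ to $K$ then yields a Hopf subalgebra of GK-dimension at least $\GKdim K+1=3$, contradicting $\GKdim H=2$. Hence $\overline{H}=k$ and $H=K$. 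Once $H=K$, the non-PI hypothesis immediately forbids $\lambda$ from being a root of unity in the $A$-branch and forbids $n+1=1$ in the $C$-branch, leaving exactly $A(n,q)$ with $n>0$ and $q$ not a root of unity, and $C(n)$ with $n\geq 2$.

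I expect the main obstacle to be precisely the step $H=K$: understanding the quotient coalgebra $\overline{H}$, and making rigorous the claim that a genuinely new generating skew primitive (or grouplike) over $K$ must raise the GK-dimension by one even when the adjoint $G$-action on the higher coradical layers is not semisimple, so that no Jordan-form shortcut is available --- and in particular ruling out that $H$ is a proper module-finite extension of $K$. A secondary difficulty, needed to eliminate the commutative and root-of-unity branches and the primitive case, is propagating a polynomial identity from a Hopf subalgebra of $H$ over which $H$ is module-finite up to $H$ itself; here one would combine the freeness above with the structure theory of affine PI Hopf domains of GK-dimension two.
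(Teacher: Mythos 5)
This theorem is imported from \cite[Theorem 7.2]{WZZ2} (a paper listed as in preparation), and the present paper contains no proof of it --- indeed it explicitly remarks that the proof in \cite{WZZ2} uses nothing from here --- so there is no internal argument to compare yours against; I can only assess the proposal on its own terms, and it has a genuine gap at exactly the step you flag as the crux. You reduce everything to proving $H=K$, where $K$ is the Hopf subalgebra generated by $x^{\pm1}$ and a single skew primitive $y$, and you justify this by asserting that adjoining to $K$ any further grouplike or skew primitive produces a Hopf subalgebra of GK-dimension at least $\GKdim K+1=3$. That principle is false, and this paper is built around its failure: the algebras $B(n,p_0,\dots,p_s,q)$ of Example~\ref{xxex1.2} and $B(n,\{p_i\}_1^s,q,\{\alpha_i\}_1^s)$ of Section~\ref{xxsec2} are Hopf domains of GK-dimension two generated by one grouplike together with $s\ge 2$ skew primitives that are independent modulo $C_0$ and none of which is redundant. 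So a genuinely new skew primitive generator need not raise the GK-dimension at all. What must carry the argument is the non-PI hypothesis, entering through the commutator dichotomy of Section~\ref{xxsec3} and \cite{WZZ1}: a skew primitive whose commutator is a primitive $p$th root of unity generates with the grouplikes a PI subalgebra $A(1,\lambda)$ and yields a major skew primitive $y^p$, while skew primitives with commutator outside $\sqrt{\;}$ are counted by $\dim P'_M\le \GKdim H-\GKdim C_0<2$ (Lemma~\ref{xxlem3.3}). One has to show that non-PI forces all the skew-primitive data into a single $P'_{g,\lambda,1}$, and your proposal never invokes non-PI at this point, which is precisely where it is indispensable.

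There are secondary gaps as well. Lemma~\ref{xxlem1.6} only gives $2\le\GKdim H<3$, not $\GKdim H=2$; ruling out non-integral values is itself part of what is being proved. In the branch $\lambda=1$, $\tau=0$ the subalgebra $K$ is commutative, but that does not make $H$ PI, so killing this branch again requires the skew-primitive count rather than the one-line dismissal given. The theorem does not assume $H$ is generated by grouplikes and skew primitives, and a nontrivial skew primitive of the quotient coalgebra $H/HK^{+}$ does not obviously lift to one of $H$; a Taft--Wilson type argument is needed to get from ``$H\ne K$'' to ``there is a skew primitive outside $K$''. Finally, the case $n=0$ that you propose to dispose of by ``a separate, easier analysis'' is exactly where $A(0,q)$ lives --- a non-PI affine pointed Hopf domain of GK-dimension two with $C_0=k{\mathbb Z}$ and a primitive generator --- and it requires genuine care (note that Theorem~\ref{xxthm1.4} admits $A(n,q)$ for $n\ge 0$ while the statement here lists only $n>0$), so it should not be waved away.
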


Note that the proof of \cite[Theorem 7.2]{WZZ2} does not use 
anything in this paper. Combining \cite[Theorem 7.2]{WZZ2} with 
\cite[Lemma 4.5]{GZ} and Theorems \ref{xxthm1.7} and \ref{xxthm1.9} 
we have the following Corollary. 

\begin{corollary}
\label{xxcor1.12}\cite[Theorem 0.1]{WZZ2} 
Let $H$ be an affine pointed Hopf domain of GKdimension strictly less 
than three. If $H$ is not PI, then $H$ is isomorphic to one of following
\begin{enumerate}
\item
The enveloping algebra $U(\mathfrak g)$ of 2-dimensional non-abelian
solvable Lie algebra ${\mathfrak g}$ as in Theorem \ref{xxthm1.4}(IIb).
\item
The Hopf algebra $A(n,q)$ of Example \ref{xxex1.1} where $n> 0$ and
$q$ is not a root of unity.
\item
The Hopf algebra $C(n)$ of Example \ref{xxex1.3} for $n\geq 2$.
\end{enumerate}
\end{corollary}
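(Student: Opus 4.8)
The statement to prove is Corollary \ref{xxcor1.12}.

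\begin{proof}[Proof proposal for Corollary \ref{xxcor1.12}]
The plan is to reduce the general affine pointed Hopf domain $H$ with $\GKdim H<3$ to the three subcases organized by the value of $\GKdim C_0$, exactly as the paper has set them up, and then invoke the structural results already established. Since $H$ is a pointed domain, $C_0=kG$ with $G$ the group of grouplikes, and $C_0\subseteq H$ forces $\GKdim C_0\in\{0,1,2\}$; I will treat these three cases in turn and show that in each one the non-PI hypothesis pins down $H$.

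First, suppose $\GKdim C_0=2$. By Theorem \ref{xxthm1.7}, $H\cong k\Gamma$ for $\Gamma$ either $\ZZ^2$ or $\ZZ\rtimes\ZZ$; but both of these group algebras are PI (they are module-finite over a central commutative affine subalgebra, namely $k[\ZZ^2]$ or $k[x^{\pm2},y^{\pm1}]$ respectively), so this case cannot occur for a non-PI $H$. Next, suppose $\GKdim C_0=0$, i.e. $C_0=k$ and $H$ is connected; by Theorem \ref{xxthm1.9}, $H\cong U(\fh)$ for a Lie algebra $\fh$ of dimension at most $2$. If $\dim\fh\le1$ then $U(\fh)$ is commutative, hence PI; if $\fh$ is $2$-dimensional abelian then $U(\fh)=k[x,y]$ is again PI. The only surviving possibility is $\fh$ the $2$-dimensional non-abelian solvable Lie algebra, giving conclusion (1). (One must check this $U(\fh)$ is genuinely not PI: its GK-dimension is $2$ while it is not finite over its center, since the center of $U(\fh)$ is just $k$ here — standard for the $ax+b$ Lie algebra.) Finally, suppose $\GKdim C_0=1$; as noted before Lemma \ref{xxlem1.10}, $C_0=kG$ with $G$ abelian of rank one, and by Lemma \ref{xxlem1.10} together with \cite[Proposition 2.1]{GZ} one reduces to $C_0=k\ZZ$ (the torsion of $G$ can be absorbed; a torsion grouplike generates a finite-dimensional, hence PI, Hopf subalgebra, and one argues that a non-PI $H$ must have $C_0\cong k\ZZ$). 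Then Theorem \ref{xxthm1.11} applies and yields that $H$ is $A(n,q)$ with $n>0$ and $q$ not a root of unity, or $C(n)$ with $n\ge2$ — these are conclusions (2) and (3). Assembling the three cases gives the Corollary.

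The step I expect to be the main obstacle is the $\GKdim C_0=1$ reduction to the clean hypothesis $C_0=k\ZZ$ of Theorem \ref{xxthm1.11}: one must rule out the possibility that $G$ has nontrivial torsion (or, more precisely, show that passing to the relevant Hopf subalgebra does not lose the non-PI property and lands one in the $k\ZZ$ situation). Here the key facts are that a rank-one abelian group is $\ZZ\times(\text{finite})$, that $k$ is algebraically closed of characteristic zero so a finite abelian group algebra is a product of copies of $k$, and that "non-PI" localizes appropriately to an affine subalgebra by definition. The other cases are essentially bookkeeping: each of $k\ZZ^2$, $k(\ZZ\rtimes\ZZ)$, $k[x,y]$, and the abelian $U(\fh)$ is PI by an explicit finite-over-center argument, and the non-abelian $2$-dimensional $U(\fh)$ is the one Hopf algebra in the connected case that fails PI. I would also remark that none of the three conclusions is PI and that they are pairwise non-isomorphic (different $\GKdim C_0$, or the $A(n,q)$-versus-$C(n)$ distinction from Theorem \ref{xxthm1.4}), so the list is irredundant, though strictly this is not required by the statement.
\end{proof}
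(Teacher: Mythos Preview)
Your approach is exactly the paper's: split by $\GKdim C_0\in\{0,1,2\}$, invoke Theorems \ref{xxthm1.7} and \ref{xxthm1.9} for the outer cases (discarding the PI outputs), and feed the middle case into Theorem \ref{xxthm1.11}. The one place you overcomplicate things is the reduction to $C_0=k\ZZ$ when $\GKdim C_0=1$: there is no torsion issue to worry about, because $H$ is a domain, hence so is $C_0=kG$, and a group algebra over a field of characteristic zero is a domain only if $G$ is torsionfree; together with Lemma \ref{xxlem1.10} this gives $G\cong\ZZ$ directly. Your suggested workaround (``a torsion grouplike generates a PI Hopf subalgebra, so\ldots'') would not have sufficed anyway, since a PI subalgebra does not force the ambient algebra to be PI.
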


As a consequence of the above Corollary, there is no affine
pointed Hopf domain of GK-dimension strictly between 2 and 3
\cite[Corollary 7.3]{WZZ2}. 

\section{Definition and elementary properties}
\label{xxsec2}

By the last section only un-classified (and more interesting) affine pointed 
Hopf domains of GKdimension two are PI and satisfy $\GKdim C_0=1$ and
$\Ext^1_H(k,k)=0$, which will occupy our attention for the rest of the paper. 

In this section we construct and study our main object -- a class of 
Hopf domains with $\Ext^1_H(k,k)=0$. We first introduce a more 
general class, denoted by $K$, dependent on a set of parameters with 
various conditions listed below. Suppose 
\begin{enumerate}
\item[(I2.0.1)]
$s\ge2$ and $M\geq 2$ are two integers; 
\item[(I2.0.2)]
$n_1,\cdots,n_s, p_1,\cdots,p_s$ are positive integers
such that $M=n_i p_i$ for any $i$;
\item[(I2.0.3)] 
$q_1,\cdots, q_s$ are nonzero scalars in $k$;
\item[(I2.0.4)]
for each $i$, both $q_i$ and $q_i^{n_i}$ are primitive $p_i$-th 
roots of unity;
\item[(I2.0.5)]
$q_j^{n_i}=q_i^{-n_j}$ for all $i<j$;
\item[(I2.0.6)]
$\alpha_1,\cdots, \alpha_s$ are scalars in $k$.
\end{enumerate}
There are two more conditions to consider. We will see soon in Lemma 
\ref{xxlem2.3}(b,c) that the Hopf algebra $K$ is a domain if and 
only if 
\begin{enumerate}
\item[(I2.0.7)]
$\gcd(p_i,p_j)=1$ for all $i\neq j$,
\end{enumerate}
and that $K$ satisfies the vanishing condition $\Ext^1_K(k,k)=0$ if and 
only if 
\begin{enumerate}
\item[(I2.0.8)]
$\alpha_i\neq \alpha_j$ for some $i\neq j$.
\end{enumerate}
In the rest of this section we fix a parameter set 
$$\{s, M, \{n_i\}_{i=1}^s, 
\{p_i\}_{i=1}^s, \{q_i\}_{i=1}^s, \{\alpha_i\}_{i=1}^s\}$$ 
satisfying 
(I2.0.1-I2.0.6). Let $K$ be the algebra generated by $x^{\pm 1}, y_1,
\cdots,y_s$ subject to the following relations
\begin{enumerate}
\item[(I2.0.9)]
$x x^{-1}=x^{-1} x=1$,
\item[(I2.0.10)]
$y_i x =q_i x y_i $ for all $i$,
\item[(I2.0.11)]
$y_jy_i= q_{j}^{n_i} y_iy_j$ for all $i<j$,
\item[(I2.0.12)]
$y_j^{p_j}=y_i^{p_i}+(\alpha_j-\alpha_i)(x^{M}-1)$ for all $i<j$.
\end{enumerate}
It is easy to see that the parameters $\{\alpha_i\}_{i=1}^s$ can be replaced 
by $\{0,\alpha_2-\alpha_1,\cdots, \alpha_s-\alpha_1\}$ without changing
the algebra. In other words, we may assume that $\alpha_1=0$. 
If $p_j=1$ for some $j$, then relation (I2.0.12) says that $y_j$ is 
generated by $y_i$ and $x$, so we can remove $y_j$ from the generating 
set without changing the algebra $K$. By choosing a minimal generating 
set we may assume that 
\begin{enumerate}
\item[(I2.0.13)]
$p_i\geq 2$ for all $i$. 
\end{enumerate}

\begin{lemma}
\label{xxlem2.1} 
The algebra $K$ has a $k$-linear basis of monomials
$$\{x^{w_0} y_1^{w_1}y_2^{w_2}\cdots y_s^{w_s}\}$$
where $w_0\in {\mathbb Z}, w_1\in {\mathbb N}$ and $0\leq w_i\leq
p_i-1$ for all $2\leq i\leq s$. 
\end{lemma}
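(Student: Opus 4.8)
The plan is to show that the proposed set of monomials both spans $K$ over $k$ and is linearly independent, the latter via a representation (or a PBW-type/diamond-lemma) argument.

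\emph{Spanning.} First I would observe that $K$ is generated as an algebra by $x^{\pm1}, y_1, \dots, y_s$, so every element is a $k$-linear combination of words in these generators. Using relation (I2.0.10) one moves all powers of $x$ (and $x^{-1}$, via (I2.0.9)) to the left; using (I2.0.11) one sorts the $y_i$'s into increasing index order, at the cost of scalar factors $q_j^{n_i}$; and using (I2.0.12) in the form $y_j^{p_j} = y_1^{p_1} + (\alpha_j - \alpha_1)(x^M-1)$ one reduces every exponent $w_i$ for $i \ge 2$ to the range $0 \le w_i \le p_i - 1$, replacing a high power of $y_j$ by a power of $y_1$ plus lower-order terms (the $x^M-1$ term is absorbed into the leading $x$-power and does not involve any $y$). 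Since $y_1$ is allowed arbitrary nonnegative exponent $w_1$, this rewriting terminates. One must check the rewriting is confluent enough to conclude spanning, but for spanning alone it suffices that \emph{some} sequence of reductions lands in the claimed set, which the above provides (a suitable monomial order — say, ordered first by $\sum_{i\ge 2}$ of the truncated part, or by the exponent of $y_s$, then $y_{s-1}$, etc. — makes each step strictly decreasing).

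\emph{Linear independence.} This is the part I expect to be the main obstacle, since the relations (I2.0.12) couple the different $y_i^{p_i}$ and one needs to rule out unexpected collapses. I would construct a faithful-enough action of $K$ on an explicit vector space. A natural candidate: let $R = k[y]$ be a polynomial ring and consider $V = k[z^{\pm1}] \otimes R = k[z^{\pm1}, y]$; map $x$ to multiplication by $z$ composed with the automorphism $y \mapsto q\, y$ for a suitable primitive root of unity $q$ (chosen compatibly with all the $q_i$ using (I2.0.4)–(I2.0.5)), map $y_i$ to multiplication by a polynomial $f_i(y)$ of the shape $y^{M/p_i}$ up to a shift that realizes the $\alpha_i$-twist — i.e. one wants $f_i^{p_i} - f_j^{p_j}$ to be a constant multiple of $z^M - 1$ after incorporating the $z$-shift, mirroring (I2.0.12). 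Concretely one expects to realize $K$ inside a skew Laurent extension $k[y_1,\dots,y_s][x^{\pm1};\sigma]$ or a quotient thereof, exactly as in Example \ref{xxex1.2}, and then the monomials $x^{w_0} y_1^{w_1} \cdots y_s^{w_s}$ in the stated range map to linearly independent elements because their images have distinct ``leading data'' ($z$-degree $w_0$ modulo the group action, together with the $y$-degree which, in the truncated range $0 \le w_i \le p_i-1$ for $i \ge 2$ and arbitrary $w_1$, are all distinct by the coprimality-type bookkeeping from $M = n_i p_i$). Alternatively, and perhaps more cleanly, I would verify the overlap ambiguities of the rewriting system $\{$(I2.0.9)–(I2.0.12)$\}$ resolve (Bergman's Diamond Lemma): the ambiguities to check are $(x, x^{-1})$ overlaps, $y_j y_i x$ type overlaps, $y_k y_j y_i$ triple overlaps, and overlaps of (I2.0.11) with (I2.0.12) such as $y_j^{p_j} y_i$ reduced two ways, and $y_j^{p_j+1}$; conditions (I2.0.4)–(I2.0.6) are precisely what make these resolve. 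The Diamond Lemma then yields both spanning and independence simultaneously.

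I would present the Diamond Lemma route as the main argument, since it is self-contained and the compatibility conditions (I2.0.1)–(I2.0.6) are visibly tailored to it, relegating the explicit module to a remark or using it only as a sanity check. The key technical point — and the one deserving the most care — is the overlap $y_j^{p_j} \cdot y_i$ (for $i < j$): reducing the left two letters first via (I2.0.12) gives $(y_i^{p_i} + (\alpha_j-\alpha_i)(x^M-1))\, y_i$, while using (I2.0.11) to pull $y_i$ left through $y_j^{p_j}$ first and then applying (I2.0.12) must give the same thing, which forces the scalar identity $(q_j^{n_i})^{p_j} = 1$ — true since $q_j^{n_j p_j} = q_j^M$ and $q_j^{p_j}=1$ by (I2.0.4), hence $q_j^{n_i p_j}$ is a power of $q_j^{p_j}=1$ — and one also needs $y_i$ to commute appropriately with $x^M$, i.e. $q_i^M = 1$, again from (I2.0.4) since $p_i \mid M$. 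The analogous check for the triple $y_k^{?} y_j y_i$ and for $y_j^{p_j}$ overlapping itself uses (I2.0.5) in the form $q_j^{n_i} = q_i^{-n_j}$. Once all ambiguities are confirmed resolvable, the Diamond Lemma gives that the irreducible monomials — exactly the set in the statement — form a $k$-basis, completing the proof.
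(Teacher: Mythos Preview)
Your proposal is correct and takes essentially the same approach as the paper: both argue spanning via the reduction rules and linear independence via Bergman's Diamond Lemma, checking that the overlap ambiguities resolve thanks to conditions (I2.0.4)--(I2.0.6). The paper does exactly this, explicitly resolving the overlaps $xx^{-1}x$, $y_j^{p_j}x$, and $y_j^{p_j}y_i$ and declaring the rest routine; your list of overlaps to check matches. One small point worth tightening: to have a well-defined reduction system you should, as the paper does, normalize (I2.0.12) to the single rule $y_j^{p_j}=y_1^{p_1}+\alpha_j(x^M-1)$ for each $j>1$ (setting $\alpha_1=0$), rather than keeping a rule for every pair $i<j$, since otherwise $y_j^{p_j}$ is the left-hand side of several rules and you introduce extra (easy but unnecessary) inclusion ambiguities. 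Your representation-theoretic alternative is not used in the paper and is not needed.
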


\begin{proof}
We use Bergman's Diamond Lemma \cite[Theorem 1.2]{Be}. Define a linear 
order on the set of generators as follows
$$x^{-1}<x<y_1<\cdots < y_s.$$
By using relations in (I2.0.9)-(I2.0.12) it is 
easy to see that the algebra 
$K$ is generated by $x^{\pm 1},y_1,\cdots,y_s$ subject to the following
relations, with leading monomials in the left-hand side of the equations,
\begin{enumerate}
\item[(I2.1.1)]
$x x^{-1}=1$,
\item[(I2.1.2)]
$x^{-1} x=1$,
\item[(I2.1.3)]
$y_i x=q_i x y_i $ for all $i$,
\item[(I2.1.4)]
$y_i x^{-1}=q_i^{-1} x^{-1} y_i $ for all $i$,
\item[(I2.1.5)]
$y_jy_i= q_{ij} y_iy_j$ for all $i<j$ where $q_{ij}=q_j^{n_i}=q_i^{-n_j}$,
\item[(I2.1.6)]
$y_j^{p_j}=y_1^{p_1}+\alpha_j (x^{M}-1)$ for all $j>1$ (where we assume
$\alpha_1=0$).
\end{enumerate}
Using these relations, every element in $K$ can be written as a linear
combination of monomials listed the assertion. Therefore the given set 
of monomials $\{x^{w_0} y_1^{w_1}y_2^{w_2}\cdots y_s^{w_s}\}$ span the 
algebra $K$. 

To prove these monomials form a basis, it suffices to show that all 
ambiguities generated by relations (I2.1.1-I2.1.6) can be resolved 
(see Diamond Lemma \cite[Theorem 1.2]{Be}). The rest of the proof
amounts to verifying the required statement.

The first ambiguity is created between (I2.1.1) and (I2.1.2), which 
can be resolved as follows.
$$(x x^{-1})x=1 x=x, \quad {\text{and}}\quad
x(x^{-1} x)=x 1=x.$$ 

To save space we only resolve two more
ambiguities. As noted before we may assume that $\alpha_1=0$.

The ambiguity between (I2.1.3) and (I2.1.6) is obtained from the monomial
$y_j^{p_j} x$. It is easy to see that 
$$y_j^{p_j-1} (y_j x)=q_j^{p_j} x (y_j^{p_j})=x(y_1^{p_1}+\alpha_j
(x^{M}-1))$$ 
and that
$$(y_j^{p_j}) x=(y_1^{p_1}+\alpha_j
(x^{M}-1))x=x(y_1^{p_1}+\alpha_j (x^{M}-1)).$$
So the ambiguity is resolved.

The ambiguity between (I2.1.5) and (I2.1.6) can be resolved as below.
For any $i<j$,
$$\begin{aligned}
y_j^{p_j-1} (y_jy_i)&= q_{ij}^{p_j} y_i (y_j^{p_j})=
y_i(y_1^{p_1}+\alpha_j(x^{M}-1))\\
&=(q_{1i}^{-p_i}y_1^{p_1}+\alpha_j(x^{M}-1))y_i\\
&=(y_1^{p_1}+\alpha_j(x^{M}-1))y_i
\end{aligned}
$$ and
$$(y_j^{p_j}) y_i= (y_1^{p_1}+\alpha_j(x^{M}-1))y_i.\qquad\quad $$
So the ambiguity is resolved.

It is routine to check that all other ambiguities can be resolved and
therefore the assertion follows.
\end{proof}

The coalgebra structure of $K$ is defined by the following rules
\begin{enumerate}
\item[(I2.1.7)]
$\Delta(x)=x\otimes x,\; \epsilon(x)=1$,
\item[(I2.1.8)]
$\Delta(x^{-1})=x^{-1}\otimes x^{-1}, \; \epsilon(x^{-1})=1$,
\item[(I2.1.9)]
$\Delta(y_i)=y_i\otimes 1+x^{n_i}\otimes y_i,\; \epsilon(y_i)=0$.
\end{enumerate}

\begin{lemma}
\label{xxlem2.2} 
The algebra $K$ is a Hopf algebra using the rules defined by 
{\rm{(I2.1.7)-(I2.1.9)}} 
and the antipode is determined by the following rules
\begin{enumerate}
\item[]
$S(x)=x^{-1},\; S(x^{-1})=x$,
\item[]
$S(y_i)=-x^{-n_i} y_i=-q_i^{n_i}y_i x^{-n_i}$ for all $i$.
\end{enumerate}
\end{lemma}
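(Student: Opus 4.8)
The plan is to verify directly that the maps defined by (I2.1.7)--(I2.1.9) extend to well-defined algebra homomorphisms $\Delta\colon K\to K\otimes K$ and $\epsilon\colon K\to k$, then to check the Hopf algebra axioms (coassociativity, counit, existence of antipode). The key point — and really the only nonroutine one — is the well-definedness of $\Delta$: since $K$ is presented by generators and relations (I2.0.9)--(I2.0.12), one must show that the proposed images of the generators satisfy all those relations inside $K\otimes K$. I would carry this out relation by relation.

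First I would treat the easy relations. The relation (I2.0.9), $xx^{-1}=x^{-1}x=1$, holds since $\Delta(x)\Delta(x^{-1})=(x\otimes x)(x^{-1}\otimes x^{-1})=1\otimes 1$. For (I2.0.10), compute $\Delta(y_i)\Delta(x)=(y_i\otimes 1+x^{n_i}\otimes y_i)(x\otimes x)=y_ix\otimes x+x^{n_i}x\otimes y_ix$ and $q_i\Delta(x)\Delta(y_i)=q_i(x\otimes x)(y_i\otimes 1+x^{n_i}\otimes y_i)=q_ixy_i\otimes x+q_ixx^{n_i}\otimes xy_i$; these agree using $y_ix=q_ixy_i$ in each tensor factor. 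For (I2.0.11) the computation is similar: expanding $\Delta(y_j)\Delta(y_i)$ and $q_j^{n_i}\Delta(y_i)\Delta(y_j)$ produces four terms each, and matching them uses the relations $y_jy_i=q_j^{n_i}y_iy_j$, $y_jx^{n_i}=q_j^{n_i}x^{n_i}y_j$ (from (I2.0.10)), and crucially the compatibility hypothesis (I2.0.5), $q_j^{n_i}=q_i^{-n_j}$, to handle the cross term $x^{n_j}\otimes y_j$ against $y_i\otimes 1$.

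The main obstacle is relation (I2.0.12), $y_j^{p_j}=y_i^{p_i}+(\alpha_j-\alpha_i)(x^M-1)$, because one must compute $\Delta(y_i)^{p_i}=(y_i\otimes 1+x^{n_i}\otimes y_i)^{p_i}$ via a $q$-binomial expansion. Here the hypotheses (I2.0.2) and (I2.0.4) are exactly what make this work: the two summands $y_i\otimes 1$ and $x^{n_i}\otimes y_i$ $q$-commute with parameter $q_i^{n_i}$ (since $(x^{n_i}\otimes y_i)(y_i\otimes 1)=q_i^{n_i}(y_i\otimes 1)(x^{n_i}\otimes y_i)$), and $q_i^{n_i}$ is a primitive $p_i$-th root of unity, so the quantum binomial theorem collapses to $\Delta(y_i)^{p_i}=y_i^{p_i}\otimes 1+x^{n_ip_i}\otimes y_i^{p_i}=y_i^{p_i}\otimes 1+x^M\otimes y_i^{p_i}$. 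Since $\Delta(x^M-1)=x^M\otimes x^M-1\otimes 1=x^M\otimes(x^M-1)+(x^M-1)\otimes 1$, one checks that $\Delta$ applied to the right-hand side of (I2.0.12) equals $y_i^{p_i}\otimes 1+x^M\otimes y_i^{p_i}+(\alpha_j-\alpha_i)\bigl(x^M\otimes(x^M-1)+(x^M-1)\otimes 1\bigr)$, which is symmetric in $i\leftrightarrow j$ up to rewriting $y_i^{p_i}=y_j^{p_j}-(\alpha_j-\alpha_i)(x^M-1)$, hence equals the same expression computed from $y_j^{p_j}$; so $\Delta$ respects (I2.0.12). The map $\epsilon$ visibly kills all the relations.

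Finally I would dispatch the remaining axioms. Coassociativity $(\Delta\otimes\id)\Delta=(\id\otimes\Delta)\Delta$ and the counit identity hold on the generators $x^{\pm1}$ (grouplike) and $y_i$ (skew primitive) by the standard one-line checks, and since both sides are algebra maps they agree on all of $K$. For the antipode, one verifies that the proposed $S$ is a well-defined algebra anti-homomorphism $K\to K$ — again checking the relations (I2.0.9)--(I2.0.12), where (I2.0.12) uses that $S(y_i)^{p_i}=(-x^{-n_i}y_i)^{p_i}$ simplifies via the $q$-commutation of $x^{-n_i}$ and $y_i$ together with $q_i^{n_i}$ being a primitive $p_i$-th root of unity — and then checks $m(S\otimes\id)\Delta=m(\id\otimes S)\Delta=\epsilon$ on generators, which extends to all of $K$ by the usual argument. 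With the basis from Lemma \ref{xxlem2.1} in hand, well-definedness on generators suffices throughout, so no convergence or spanning issues arise.
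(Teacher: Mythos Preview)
Your proposal is correct and follows exactly the approach the paper takes (which is stated there only tersely: verify that $\Delta$ and $\epsilon$ respect the defining relations, check coassociativity and counit on generators, then check that $S$ extends to an anti-homomorphism and satisfies the antipode axiom on generators); you have simply supplied the details the paper omits. One small slip: in your $q$-commutation for the binomial expansion you should have $(x^{n_i}\otimes y_i)(y_i\otimes 1)=q_i^{-n_i}(y_i\otimes 1)(x^{n_i}\otimes y_i)$ rather than $q_i^{n_i}$, but since $q_i^{-n_i}$ is equally a primitive $p_i$-th root of unity the collapse of the quantum binomial still goes through and your conclusion is unaffected.
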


\begin{proof} It is easy to verify that rules (I2.1.7)-(I2.1.9)
define algebra homomorphisms $\Delta: K\to K\otimes K$ and $\epsilon:
K\to k$  since both of them maps relations of $K$ to zero. 
Coassociativity and counit axioms hold since these axioms hold for 
the generators. This proves that $K$ is a bialgebra.

Note that $S$ extends to an algebra anti-automorphism of $K$. To check
$K$ is a Hopf algebra we only need to apply the antipode axiom to
the generators, which can be verified directly.
\end{proof}

The Hopf algebra $K$ is denoted by $K(\{p_i\}, \{q_i\}, \{\alpha_i\}, M)$
if we need to indicate the parameters. Note that $n_i=M/p_i$ for all $i=
1,\cdots,s$.

\begin{lemma}
\label{xxlem2.3} 
Let $K$ be 
$K(\{p_i\}, \{q_i\}, \{\alpha_i\}, M)$.
\begin{enumerate}
\item
$\gcd(p_i, n_i)=1$ for all $i$. 
\item
If $K$ is a domain (or, more generally, $K$ has a quotient Hopf algebra 
domain $K'$ of GK-dimension two), then $q_j^{n_i}=q_i^{n_j}=1$ and 
$\gcd(p_i,p_j)=1$ for all $i\neq j$. As a consequence, {\rm{(I2.0.7)}} 
holds.
\item
$K$ satisfies \nat\; if and only if $\alpha_i=\alpha_j$ for all $i\neq j$.
\end{enumerate}
\end{lemma}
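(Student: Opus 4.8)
The plan is to analyze the three parts of Lemma \ref{xxlem2.3} by exploiting the monomial basis from Lemma \ref{xxlem2.1} together with the coalgebra structure (I2.1.7)--(I2.1.9). For part (a), I would argue as follows: from (I2.0.2) and the definition $n_i = M/p_i$, any common divisor $d$ of $p_i$ and $n_i$ would divide $M = n_i p_i$; but more to the point, condition (I2.0.4) says $q_i^{n_i}$ is a \emph{primitive} $p_i$-th root of unity. Since $q_i$ itself is a primitive $p_i$-th root of unity, $q_i^{n_i}$ has order $p_i/\gcd(p_i,n_i)$, and this equals $p_i$ forces $\gcd(p_i,n_i)=1$. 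So part (a) is immediate from (I2.0.4); I would state it in one or two lines.

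For part (b), the key idea is to pass to a GK-dimension-two quotient Hopf algebra domain $K'$ and track the images of the generators. In $K$ we have the relation (I2.0.11), $y_j y_i = q_j^{n_i} y_i y_j$ for $i<j$, and by (I2.0.5) $q_j^{n_i} = q_i^{-n_j}$. First I would show that in any quotient Hopf domain of GK-dimension two, the images $\bar y_i$ of the $y_i$ must be nonzero: indeed $y_i^{p_i} = y_1^{p_1} + \alpha_i(x^M-1)$, and if $\bar y_i = 0$ for two different indices one gets $\bar x^M = 1$ (when the $\alpha$'s differ) or a relation forcing GK-dimension to drop, contradicting that $K'$ is a domain of GK-dimension two. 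With all $\bar y_i \neq 0$, I would consider the Hopf subalgebra of $K'$ generated by $\bar x^{\pm 1}$ and a single $\bar y_i$; by \cite[Lemma 4.5]{GZ} this is a domain, and since it is a quotient of $A(n_i, q_i)$-type data it has GK-dimension at most two. The crucial point is that $K'$ being a domain of GK-dimension two \emph{exactly} two cannot accommodate a genuinely noncommutative relation among $\bar y_i$ and $\bar y_j$ unless the skew-commutation scalar is trivial: here I would invoke a GK-dimension/growth argument (in the spirit of Lemma \ref{xxlem1.6} and \cite[Theorem 0.2]{WZZ1}) showing that if $q_j^{n_i} \neq 1$ then one builds too many algebraically independent elements. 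Concretely, if $q_j^{n_i}$ is a primitive $t$-th root of unity with $t\geq 2$, then $\bar y_i^{t}$ and $\bar y_j$ skew-commute trivially but $\bar y_i, \bar y_j, \bar x$ generate something of GK-dimension $\geq 3$, contradiction; hence $q_j^{n_i}=1$ and by (I2.0.5) also $q_i^{n_j}=1$. Then $\gcd(p_i,p_j)=1$ follows because $q_i$ has order $p_i$ and $q_i^{n_j}=1$ forces $p_i \mid n_j = M/p_j$; combined with $p_j \mid n_i = M/p_i$ and $M = n_ip_i = n_jp_j$, a short number-theoretic argument (using part (a)) gives coprimality. This establishes (I2.0.7).

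For part (c), I would compute $\Ext^1_K(k,k)$ directly, or equivalently count the skew primitive elements modulo trivial ones and the relations they must satisfy. Using (I2.1.9), each $y_i$ is $(1,x^{n_i})$-primitive, and $x-1$ is $(1,x)$-primitive. The group $G$ of grouplikes is $\langle x\rangle \cong \mathbb Z$, so $C_0 = k\mathbb Z$ and $\Ext^1_K(k,k)$ decomposes by the characters of $G$ acting by conjugation. The abelianization / Hochschild-type computation reduces to: the degree-one part of the dual is spanned by the classes of $x-1$ and of the $y_i$ that remain nonzero and independent modulo the ideal generated by $(x^M-1)$ and the relations (I2.0.12). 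When all $\alpha_i$ are equal, relation (I2.0.12) reads $y_j^{p_j} = y_i^{p_i}$, imposing no linear constraint, and one finds a surviving skew-primitive contribution giving $\Ext^1_K(k,k)\neq 0$, i.e.\ \nat\ holds. When $\alpha_i \neq \alpha_j$ for some pair, the relation $y_j^{p_j} - y_i^{p_i} = (\alpha_j - \alpha_i)(x^M - 1)$ links the $x$-direction to the $y$-directions in a way that kills the relevant cohomology class, so $\Ext^1_K(k,k) = 0$. I would carry this out by identifying $\Ext^1_K(k,k)$ with $(I/I^2)^*$ restricted appropriately, where $I = \ker\epsilon$, and checking the one relevant linear functional.

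The main obstacle I anticipate is part (b): turning ``$K'$ is a domain of GK-dimension two'' into the sharp arithmetic conclusion $q_j^{n_i}=1$. The subtlety is that $K$ itself need not be a domain a priori, so one must work with an abstract quotient $K'$ and show its generators behave well; controlling GK-dimension of the subalgebras generated by pairs $\{\bar x, \bar y_i\}$ and $\{\bar x, \bar y_i, \bar y_j\}$ requires the structural input of \cite[Lemma 4.5]{GZ} and the growth dichotomy of \cite[Theorem 0.2]{WZZ1}, and assembling these into a clean contradiction when $q_j^{n_i}\neq 1$ is the delicate step. Parts (a) and (c) should be comparatively routine once the bookkeeping is set up.
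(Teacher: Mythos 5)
Parts (a) and (c) are essentially fine. Your one-line argument for (a) is exactly the paper's. For (c), you compute $\Ext^1_K(k,k)\cong (I/I^2)^*$ directly, whereas the paper routes the same computation through $K/[K,K]$ and \cite[Theorem 3.8(c)]{GZ} (when the $\alpha_i$ all agree, $K/(y_1,\dots,y_s)\cong k[x^{\pm1}]$ is an infinite-dimensional commutative quotient, so \nat\ holds; when some $\alpha_i\neq\alpha_j$, the relations force $y_i\in[K,K]$ and $x^M-1\in[K,K]$, so $K/[K,K]$ is finite-dimensional and \nat\ fails). These are equivalent bookkeeping devices; just make sure that in the ``all $\alpha_i$ equal'' direction you actually exhibit a nonzero functional (e.g.\ via the quotient onto $k[x^{\pm1}]$) rather than only observing that no constraint is imposed.

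Part (b) has a genuine gap. Your proposed contradiction --- that if $q_j^{n_i}$ is a primitive $t$-th root of unity with $t\geq 2$ then $\bar y_i,\bar y_j,\bar x$ ``generate something of GK-dimension $\geq 3$'' --- is false. Theorem \ref{xxthm2.7}(d) shows $\GKdim K=2$ for \emph{every} admissible parameter set, including those violating (I2.0.7); the explicit non-domain example $K(\{10,15\},\{q^3,q^{-2}\},\{0,1\},30)$ after Proposition \ref{xxprop2.4} still has GK-dimension two. The failure of the domain property when $q_j^{n_i}\neq 1$ is not a growth phenomenon, so no contradiction can be extracted from GK-dimension of the full subalgebra on three generators. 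The idea you are missing is the paper's substitution trick: since $k$ is algebraically closed, write $\alpha_j-\alpha_i=\gamma^{p_i}$; because $y_ix^{n_i}=q_i^{n_i}x^{n_i}y_i$ with $q_i^{n_i}$ a primitive $p_i$-th root of unity, the quantum binomial theorem gives
$$y_i^{p_i}+\gamma^{p_i}(x^{n_ip_i}-1)=(y_i+\gamma x^{n_i})^{p_i}-\gamma^{p_i},$$
so $a:=y_i+\gamma x^{n_i}$ and $b:=y_j$ satisfy $ba=q_j^{n_i}ab$ \emph{and} $b^{p_j}=a^{p_i}-\gamma^{p_i}$. The second relation makes $k\langle a,b\rangle$ a proper quotient of the quantum plane, hence of GK-dimension at most one; since it is a domain, \cite[Lemma 4.5]{GZ} forces it to be commutative, whence $q_j^{n_i}=1$. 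It is precisely this two-generator subalgebra of GK-dimension $\leq 1$ (not a three-generator subalgebra of GK-dimension $\geq 3$) that yields the conclusion, and the completion of the $p_i$-th power is what makes the relation between $a$ and $b$ ``pure'' enough to cap the growth. Your deduction of $\gcd(p_i,p_j)=1$ from $q_j^{n_i}=1$ via part (a) is fine once $q_j^{n_i}=1$ is actually established.
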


\begin{proof} (a) Since $q_i$ and $q_i^{n_i}$ are both primitive
$p_i$-th root of unity, $\gcd(p_i,n_i)=1$.

(b) First we assume $K$ is a domain. Fix any $i\neq j$. Since $k$ is 
algebraically closed, there is a $\gamma$ such that $\alpha_j-\alpha_i
=\gamma^{p_i}$. We re-write the relation 
$y_j^{p_j}=y_i^{p_i}+(\alpha_j-\alpha_i) (x^{M}-1)$ as 
$y_j^{p_j}=y_i^{p_i}+\gamma^{p_i}(x^{p_i n_i}-1)$.
Since $y_i x^{n_i} =q_i^{n_i}x^{n_i} y_i$ and $q_i^{n_i}$ is a primitive 
$p_i$-th root of unity, we have 
$y_j^{p_j}=(y_i+\gamma x^{n_i})^{p_i}-\gamma^{p_i}$.
The relation $y_jy_i=q_j^{n_i} y_i y_j$ implies that 
$y_j(y_i+\gamma x^{n_i})=q_j^{n_i} (y_i+\gamma x^{n_i}) y_j$.
Thus the subalgebra $Y$ generated by $a:=y_i+\gamma x^{n_i}$ and
$b:=y_j$ has GK-dimension at most one. To see this, note that 
$k\langle a,b\rangle/(ba-q_j^{n_i}ab)$ is a domain of GK-dimension two
and that $Y$ is a proper quotient of $k\langle a,b\rangle/(ba-q_j^{n_i}ab)$.
Since $K$ is a domain, so is $Y$. By \cite[Lemma 4.5]{GZ}, $Y$ is 
commutative, and whence, $q_j^{n_i}=q_i^{-n_j}=1$. Consequently, $p_j$ 
divides $n_i$. By part (a), $\gcd(p_i,p_j)=\gcd(p_i, n_i)=1$.

If $K$ has a quotient Hopf algebra $K'$ which is a domain of GK-dimension 
two, the proof can be modified so that $ab=ba$ in $K'$ where $a$ is
the image of $y_i+\gamma x^{n_i}$ and $b$ is the image of $y_j$ in $K'$. 
Further that $y_iy_j=q_j^{n_i} y_i y_j$ in $K$ implies that 
$ab=q_j^{n_i} ba$ in $K'$. If $q_j^{n_i}\neq 1$,
then either $a$ or $b$ is 0 in $K'$. In either cases, equation (I2.1.6)
implies that the image of $y_1^{p_1}$ is in coradical of $K'$, which 
is $C_0(K')=k[x,x^{-1}]$. Consequently, the image of $y_j^{p_j}$ is
also in $C_0(K')$ for all $j$. Therefore $\GKdim K'=1$, a contradiction.
Thus $q_j^{n_i}=1$, which leads to the conclusion. 

(c) If $\alpha_i=\alpha_j$ for all $i,j$, then $K/(y_1,\cdots,y_s)$ is 
isomorphic to $k[x,x^{-1}]$, which is an infinite dimensional
commutative Hopf algebra. Hence \nat\; holds following 
\cite[Theorem 3.8(c)]{GZ}. 

Suppose $\alpha_i\neq \alpha_i$ for some $i\neq j$. As noted before 
we may assume $p_i\geq 2$ for all $j$ to avoid triviality. 
Then $q_i$ is not 1 for each $i$. Relation (I2.0.10) implies that
$y_i\in [K,K]$ for all $i$. Thus $K/[K,K]$ is isomorphic to 
$k[x,x^{-1}]/(x^{M}-1)$ by relation (I2.0.12), which is finite dimensional.
By \cite[Theorem 3.8(c)]{GZ}, \nat\; fails.
\end{proof}

\begin{proposition}
\label{xxprop2.4} 
The following are equivalent.
\begin{enumerate}
\item
$K(\{p_i\}, \{q_i\}, \{\alpha_i\}, M)$ is a domain.
\item
$\gcd(p_i,p_j)=1$ for all $i\neq j$. (Consequently, $p_j\mid n_i$ for all
$i\neq j$). 
\item
There exists a nonzero scalar $q$ such that $q_i=q^{m_i}$ for each $i$ 
where $m_i=(p_1\cdots p_s)/p_i$, and in this case 
$K(\{p_i\}, \{q_i\}, \{0\}, M)$ is isomorphic to a domain 
$B(n,n,p_1,\cdots, p_s,q)$ for $n:=M/(p_1\cdots p_s)$.
\item
$K(\{p_i\}, \{q_i\}, \{0\}, M)$ is a domain.
\end{enumerate}
\end{proposition}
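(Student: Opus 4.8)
The plan is to prove the chain of equivalences $(1)\Rightarrow(2)\Rightarrow(3)\Rightarrow(4)\Rightarrow(1)$, using Lemma \ref{xxlem2.3} for the hardest implication and the theory of $B(n,p_0,\dots,p_s,q)$ from Example \ref{xxex1.2} for the structural identification. The implication $(1)\Rightarrow(2)$ is immediate from Lemma \ref{xxlem2.3}(b), and the parenthetical remark $p_j\mid n_i$ follows because $p_j\mid M=n_ip_i$ together with $\gcd(p_i,p_j)=1$. The implication $(4)\Rightarrow(1)$ is trivial once we observe (as noted right after (I2.0.6)) that the parameters $\{\alpha_i\}$ can be translated so that $\alpha_1=0$, and then a further argument is needed to see that the isomorphism type of $K$ as an \emph{algebra} does not depend on the $\alpha_i$ at all — actually this is not true in general, so instead I would route $(1)\Leftrightarrow(4)$ through condition (2): Lemma \ref{xxlem2.3}(b) shows domain $\Rightarrow$ (2), and (2) is visibly a condition not involving the $\alpha_i$, so it suffices to prove $(2)\Rightarrow(1)$ and $(2)\Rightarrow(4)$ simultaneously. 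This is why the real content is $(2)\Rightarrow(3)$ and $(3)\Rightarrow$ (both (1) and (4)).

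For $(2)\Rightarrow(3)$, assume $\gcd(p_i,p_j)=1$ for all $i\neq j$. First I would produce the scalar $q$. Since the $p_i$ are pairwise coprime, by the Chinese Remainder Theorem the map $\ZZ \to \prod_i \ZZ/p_i\ZZ$ is surjective, and $m_i=(p_1\cdots p_s)/p_i$ is a unit modulo $p_i$ and zero modulo $p_j$ for $j\neq i$; I want to solve $q_i=q^{m_i}$ for all $i$ with a common $q$. Since each $q_i$ is a primitive $p_i$-th root of unity and $\gcd(m_i,p_i)=1$, there is a unique primitive $p_i$-th root of unity $\zeta_i$ with $\zeta_i^{m_i}=q_i$; now set $q=\prod_i \zeta_i$, which is a primitive $(p_1\cdots p_s)$-th root of unity (using coprimality), and check $q^{m_i}=\prod_j \zeta_j^{m_i}=\zeta_i^{m_i}=q_i$ because $\zeta_j^{m_i}=1$ for $j\neq i$ as $p_j\mid m_i$. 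One must also verify that the compatibility conditions (I2.0.4)--(I2.0.5) on the original data are consistent with this choice, i.e. that $q_i^{n_i}$ is automatically a primitive $p_i$-th root of unity and that (I2.0.5) is forced; here Lemma \ref{xxlem2.3}(a),(b) tells us $\gcd(p_i,n_i)=1$, so $q_i^{n_i}$ is indeed primitive $p_i$-th, and one checks $q_j^{n_i}=q^{m_jn_i}$, with $m_jn_i=m_j M/p_i$ divisible by $p_1\cdots p_s$ (since $p_i\mid m_j$ forces... ) — this needs a short congruence check that I'd carry out carefully. Then with $n:=M/(p_1\cdots p_s)$, which is a positive integer because $p_i\mid n_i=M/p_i$... actually because each $p_i\mid M$ and the $p_i$ are coprime so $p_1\cdots p_s\mid M$, I would match the presentation (I2.0.9)--(I2.0.12) with $\alpha_i=0$ against Construction in Example \ref{xxex1.2}: in $B(n,n,p_1,\dots,p_s,q)$ we have $p_0=n$, $\ell=(n/p_0)p_1\cdots p_s=p_1\cdots p_s$, $m=p_1\cdots p_s$, $m_i=m/p_i$, and the generators $y_i=y^{m_i}\in k[y]$ satisfy $y_i^{p_i}=y^m=y_j^{p_j}$ and $y_ix=q^{m_i}xy_i$, $x^{m_in}=x^M$ — I'd check these reproduce exactly (I2.0.10)--(I2.0.12) with $\alpha_i=0$, and that the coalgebra structures agree via $n_i=m_in$. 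This gives the isomorphism $K(\{p_i\},\{q_i\},\{0\},M)\cong B(n,n,p_1,\dots,p_s,q)$.

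Finally, $(3)\Rightarrow(4)$ is immediate since $B(n,n,p_1,\dots,p_s,q)$ is a domain by Example \ref{xxex1.2} (it is built as a skew Laurent polynomial ring $A[x^{\pm1};\sigma]$ over the domain $A=k[y_1,\dots,y_s]$), hence so is the isomorphic algebra $K(\{p_i\},\{q_i\},\{0\},M)$. For $(4)\Rightarrow(1)$, I need to upgrade "$K$ with $\alpha_i=0$ is a domain" to "$K$ with arbitrary $\alpha_i$ is a domain", but under the standing assumption (2); the cleanest route is to show $K(\{p_i\},\{q_i\},\{\alpha_i\},M)$ and $K(\{p_i\},\{q_i\},\{0\},M)$ have "the same" associated graded ring or filtered structure, or — better — to observe directly from the basis in Lemma \ref{xxlem2.1} that multiplication of the PBW monomials in $K(\{p_i\},\{q_i\},\{\alpha_i\},M)$ is a "deformation" of that in $K(\{p_i\},\{q_i\},\{0\},M)$: put a filtration on $K$ in which $\deg y_i=1$ and $\deg x^{\pm1}=0$, so that (I2.0.12) reads $y_j^{p_j}-y_i^{p_i}$ (top degree $p_j$, using $p_i=p_j$... no, $p_i\neq p_j$) — so instead I'd use the filtration by total $y$-degree where $y_i^{p_i}$ and the lower-order term $(\alpha_j-\alpha_i)(x^M-1)$ have genuinely different degrees only if we weight $\deg y_i = 1/p_i$, i.e. a $\QQ_{\ge 0}$-filtration, giving $\gr K \cong B(n,n,p_1,\dots,p_s,q)$; since the associated graded ring is a domain, $K$ is a domain. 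The main obstacle I anticipate is precisely this last step — making the filtration argument rigorous (choosing the right weights so the deformation term is strictly lower order, and invoking that a filtered ring with domain associated graded ring, under an exhaustive separated filtration, is a domain) — together with the bookkeeping in $(2)\Rightarrow(3)$ verifying that the presentation relations match those of $B(n,n,p_1,\dots,p_s,q)$ on the nose, including the coalgebra structure and the precise role of $n_i=m_in$.
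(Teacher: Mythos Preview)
Your proposal is correct and follows the same cycle $(1)\Rightarrow(2)\Rightarrow(3)\Rightarrow(4)\Rightarrow(1)$ as the paper, with two minor methodological differences worth noting.

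For $(2)\Rightarrow(3)$, you construct $q$ arithmetically via the Chinese Remainder Theorem, picking primitive $p_i$-th roots $\zeta_i$ with $\zeta_i^{m_i}=q_i$ and setting $q=\prod_i\zeta_i$. The paper instead argues geometrically: under (2) the $y_i$ commute (since $q_j^{n_i}=1$), so the subalgebra $Y=k[y_1,\dots,y_s]$ embeds in $k[y]$ via $y_i\mapsto y^{m_i}$, and after localizing at the monomials one obtains $KS^{-1}=k[y^{\pm1}][x^{\pm1};\sigma]$; the scalar $q$ is then read off from $yx=qxy$ and automatically satisfies $q_i=q^{m_i}$. Your approach is more elementary and self-contained; the paper's buys the identification with $B(n,n,p_1,\dots,p_s,q)$ essentially for free, since that algebra is by definition $Y[x^{\pm1};\sigma]$.

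For $(4)\Rightarrow(1)$, your instinct to weight $\deg y_i=1/p_i$ is exactly right, and you need not worry about $\QQ$-valued filtrations: simply clear denominators and set $\deg y_i=m_i=(p_1\cdots p_s)/p_i$, $\deg x=0$, which is the paper's choice. Then every $y_i^{p_i}$ has degree $p_1\cdots p_s$, the term $(\alpha_j-\alpha_i)(x^M-1)$ has degree $0$, and the associated graded ring is visibly $K(\{p_i\},\{q_i\},\{0\},M)$. This resolves the obstacle you flagged.
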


\begin{proof} (a) $\Rightarrow$ (b) This is Lemma \ref{xxlem2.3}(b).

(b) $\Rightarrow$ (c) We consider the
algebra $K:=K(\{p_i\}, \{q_i\}, \{0\}, M)$ under the
hypothesis that $\gcd(p_i,p_j)=1$ for all $i\neq j$. 

Since $M=p_i n_i=p_j n_j$ and $\gcd(p_j,p_i)=1$,
$p_j\mid n_i$ for all $i\neq j$. Then $q_j^{n_i}=1$ and consequently,
$y_i$ commutes with $y_j$. Let $A$ be the subalgebra of $K$ generated 
by $y_1,\cdots, y_s$. Then we have relations
$$y_iy_j=y_jy_i,\quad y_i^{p_i}=y_j^{p_j}$$
for all $i\neq j$. By Lemma \ref{xxlem2.1}, there is no
other relations in $Y$. By the proof of \cite[Construction 1.2]{GZ}, 
$Y$ is isomorphic to a subalgebra of $k[y,y^{-1}]$ by identifying $y_i$ 
with $y^{m_i}$. Further $YS^{-1}$ is isomorphic to $k[y,y^{-1}]$
where $S$ is the set of all monomials in $y_1,\cdots,y_s$. Using the
relations (I2.0.9)-(I2.0.12) it is easy to see that $S$ is an Ore set 
of $K$ and 
the localization $KS^{-1}$ equals $k[y,y^{-1}][x,x^{-1};\sigma]$
where $\sigma$ is a graded algebra automorphism of $k[y,y^{-1}]$.
Let $q$ be the scalar such that $yx=q xy$. Then 
$$y_i x=y^{m_i} x=q^{m_i} xy^{m_i}=q^{m_i} xy_i$$
for all $i$. By comparing the above equation with (I2.0.10), we obtain
that $q_i=q^{m_i}$. Recall that we assume $\alpha_i=0$ for all $i$. In this
case the algebra $K(\{p_i\}, \{q_i\}, \{0\}, M)$ is exactly the algebra 
$B(n,n,p_1,\cdots, p_s,q)$ in Example \ref{xxex1.2} (with $p_0=n$). 
The assertion follows and $K$ is a domain. 

(c) $\Rightarrow$ (d) Clear.

(d) $\Rightarrow$ (a) Define an ${\mathbb N}$-filtration on 
$K(\{p_i\}, \{q_i\}, \{\alpha_i\}, M)$ by setting $\deg(x)=0$,
$\deg y_i=m_i$. Then the associated graded ring is isomorphic to
$K(\{p_i\}, \{q_i\}, \{0\}, M)$. Since $K(\{p_i\}, \{q_i\}, \{0\}, M)$
is a domain, so is $K(\{p_i\}, \{q_i\}, \{\alpha_i\}, M)$.
\end{proof}

By Proposition \ref{xxprop2.4}, the Hopf algebra 
$K(\{10,15\}, \{q^3, q^{-2}\},\{0,1\}, 30)$ is not a domain where 
$q$ is a primitive $30$th root of unity. The choice of $\{q_s\}$ 
is not unique even if all other parameters are fixed. For example, 
$K(\{10,15\}, \{q^3, q^{8}\},\{0,1\}, 30)$ is also a Hopf algebra
of the same kind. 

\begin{convention}
\label{xxcon2.5} Suppose (I2.0.1)-(I2.0.7) hold for the parameter set
used for the algebra $K$. Re-arranging $\{p_i\}_{i=1}^s$ we may 
assume that $1<p_1<p_2<\cdots< p_s$. Let $\ell=p_1\cdots p_s$, 
$m_i=\ell/p_i$ and $n=M/\ell$.
By Proposition \ref{xxprop2.4} and Example \ref{xxex1.2}, there is an 
$\ell$-th root of unity $q$ such that $q_i=q^{m_i}$ for every $i$.
As a consequence, since $M/(p_ip_j)$ is an integer for $i<j$, 
$$q_j^{n_i}=(q^{m_j})^{n_i}=
q^{\frac{\ell M}{p_j p_i}}=(q^{\ell})^{\frac{M}{p_j p_i}}=1$$
for all $i<j$. In this case, 
the algebra $K(\{p_i\}, \{q_i\}, \{\alpha_i\}, M)$ is a domain and 
denoted by $B(n,\{p_i\}_{1}^s,q,\{\alpha_i\}_{1}^s)$. In other words, 
the algebra $B(n,\{p_i\}_{1}^s,q,\{\alpha_i\}_{1}^s)$ is generated 
by $x^{\pm 1}, y_1,\cdots, y_s$ and subject to the relations
\begin{enumerate}
\item[]
$\;$ $x x^{-1}=x^{-1} x=1$,
\item[]
$\;$
$y_i x =q^{m_i} x y_i $ for all $i$,
\item[]
$\;$
$y_jy_i= y_iy_j$ for all $i<j$,
\item[]
$\;$
$y_j^{p_j}=y_i^{p_i}+(\alpha_j-\alpha_i)(x^{M}-1)$ for all $i<j$,
\end{enumerate}
with comultiplication and counit determined by (I2.1.7)-(I2.1.9)
and antipode determined by rules in Lemma \ref{xxlem2.2}. If $\alpha_i
=\alpha_j$ for all $i,j$, then $B(n,\{p_i\}_{1}^s,q,\{\alpha_i\}_{1}^s)$ 
is just the algebra $B(n,n,p_1,\cdots,p_s,q)$. 
Note that $n=M/(p_1\cdots p_s)$ and that we have removed $p_0$ from the 
above $B$ convention since $p_0=n$. 
\end{convention}

We continue to work on the algebra $K(\{p_i\}, \{q_i\}, \{\alpha_i\}, M)$ 
without assuming (I2.0.7) although our main interest is about 
$B(n,\{p_i\}_{1}^s,q,\{\alpha_i\}_{1}^s)$.

\begin{lemma}
\label{xxlem2.6} 
The coalgebra structure of $K(\{p_s\}, \{q_s\}, \{\alpha_s\}, M)$ is 
independent of $\{\alpha_i\}$. In particular, 
$K(\{p_s\}, \{q_s\}, \{\alpha_s\}, M)$ is isomorphic to
$K(\{p_s\}, \{q_s\}, \{ 0\}, M)$ as coalgebras.
\end{lemma}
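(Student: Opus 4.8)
The claim is that the coalgebra structure of $K(\{p_i\},\{q_i\},\{\alpha_i\},M)$ does not depend on the $\alpha_i$. The plan is to exhibit an explicit coalgebra isomorphism onto the specific representative $K(\{p_i\},\{q_i\},\{0\},M)$, which by Lemma~\ref{xxlem2.6}'s own statement suffices. The natural candidate is the $k$-linear map $\phi$ defined on the PBW-type basis from Lemma~\ref{xxlem2.1}: since both algebras have the \emph{same} underlying vector space with the same distinguished basis $\{x^{w_0}y_1^{w_1}\cdots y_s^{w_s}\}$ (the monomial basis in Lemma~\ref{xxlem2.1} does not involve $\{\alpha_i\}$ at all), one simply takes $\phi$ to be the identity on basis vectors. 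The content to verify is then that this identity map is a coalgebra morphism.

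First I would observe that the comultiplication and counit rules (I2.1.7)--(I2.1.9) defining the coalgebra structure are literally the same formulas in both algebras: $\Delta(x)=x\otimes x$, $\Delta(x^{-1})=x^{-1}\otimes x^{-1}$, $\Delta(y_i)=y_i\otimes 1 + x^{n_i}\otimes y_i$, and $\epsilon(x)=\epsilon(x^{-1})=1$, $\epsilon(y_i)=0$, with $n_i=M/p_i$ depending only on $M$ and $p_i$. So on generators $\phi$ visibly intertwines the coproducts and counits. The only subtlety is that to conclude $\phi$ is a coalgebra map globally, one cannot simply say "$\Delta$ is an algebra map so it is determined by generators" — because $\phi$ itself is \emph{not} an algebra map (the algebra relations (I2.0.12) differ between the two algebras). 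Instead I would argue directly on the basis: for each basis monomial $b=x^{w_0}y_1^{w_1}\cdots y_s^{w_s}$ with $0\le w_i\le p_i-1$ for $i\ge 2$, compute $\Delta(b)$ in $K(\{p_i\},\{q_i\},\{\alpha_i\},M)$ by repeatedly applying $\Delta$'s multiplicativity and then re-expressing the result in the monomial basis; the point is that this computation never invokes the relation $y_j^{p_j}=y_i^{p_i}+(\alpha_j-\alpha_i)(x^M-1)$, because the exponents $w_i$ stay in the range $[0,p_i-1]$ throughout the straightening, so no $\alpha$-dependent reduction occurs. Hence the expansion of $\Delta(b)$ into basis-tensor-basis terms has coefficients independent of the $\alpha_i$, and likewise $\epsilon(b)$; this is exactly the statement that $(\phi\otimes\phi)\circ\Delta = \Delta\circ\phi$ and $\epsilon\circ\phi=\epsilon$.

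The one place requiring a little care — and what I expect to be the main (mild) obstacle — is justifying that the straightening of $\Delta(b)$ really stays within the "$w_i\le p_i-1$" range. When one expands $\Delta(y_i^{w_i})=(y_i\otimes 1+x^{n_i}\otimes y_i)^{w_i}$ using the $q$-binomial theorem (valid because $y_i\otimes 1$ and $x^{n_i}\otimes y_i$ $q_i^{n_i}$-commute inside $K\otimes K$), each term has the form $\binom{w_i}{t}_{q_i^{n_i}} x^{n_i t}y_i^{\,w_i-t}\otimes y_i^{\,t}$, and since $w_i\le p_i-1$ both exponents $t$ and $w_i-t$ lie in $[0,p_i-1]$; so no power $y_i^{p_i}$ is ever produced and the $(\alpha_j-\alpha_i)(x^M-1)$ correction is never triggered. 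Assembling these over $i=1,\dots,s$ and tracking the commutation scalars $q_i$, $q_{ij}$ (all $\alpha$-independent) from moving $x$'s and $y$'s past each other to restore normal form completes the verification. I would then remark that $\epsilon$ is treated identically (and more trivially), so $\phi=\mathrm{id}$ is the desired coalgebra isomorphism $K(\{p_i\},\{q_i\},\{\alpha_i\},M)\xrightarrow{\sim}K(\{p_i\},\{q_i\},\{0\},M)$, proving both sentences of the lemma.
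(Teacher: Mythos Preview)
Your proposal is correct and follows essentially the same approach as the paper: the paper's proof simply invokes the monomial basis of Lemma~\ref{xxlem2.1} and asserts that the coproduct on basis elements coincides for all choices of $\{\alpha_i\}$, which is precisely the content you have unpacked in detail. One minor imprecision: your bound ``$w_i\le p_i-1$'' does not apply to $i=1$ (where $w_1\in\mathbb{N}$ is unrestricted), but this is harmless since powers of $y_1$ are already in normal form and never trigger relation (I2.1.6).
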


\begin{proof} We use the $k$-linear basis given in Lemma \ref{xxlem2.1}. 
Then the coproduct of $K(\{p_s\}, \{q_s\}, \{\alpha_s\}, M)$ and
the coproduct of $K(\{p_s\}, \{q_s\}, \{0\}, M)$ coincide.
Hence the assertion follows.
\end{proof}

By Lemma \ref{xxlem2.3}(c), $K(\{p_s\}, \{q_s\}, \{\alpha_s\}, M)$ (when
$\alpha_i\neq \alpha_j$) is not isomorphic to
$K(\{p_s\}, \{q_s\}, \{ 0\}, M)$ as algebras.

\begin{theorem}
\label{xxthm2.7} Let $K:=K(\{p_i\}, \{q_i\}, \{\alpha_i\}, M)$ be 
defined as above.
\begin{enumerate}
\item
The algebra $K$ is affine and noetherian.
\item
$K$ is pointed and the coradical of $K$ is $C_0=k[x,x^{-1}]\cong 
k{\mathbb Z}$.
\item
$K$ is finitely generated over its affine center.
\item
$\GKdim K=2$. 
\item
$\injdim K=2$.
\item
$\gldim K$ is finite if and only if $\gldim K=2$ if and only if $s=2$ 
and $\alpha_1\neq \alpha_2$. 
\end{enumerate}
\end{theorem}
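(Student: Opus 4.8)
The plan is to establish the six assertions more or less in the order listed, leveraging the concrete monomial basis of Lemma \ref{xxlem2.1} and the filtration argument already used in the proof of Proposition \ref{xxprop2.4}(d)$\Rightarrow$(a). For part (a), affineness is immediate from the presentation with generators $x^{\pm1}, y_1,\dots,y_s$. For noetherianity I would introduce the $\mathbb N$-filtration with $\deg x=0$, $\deg y_i=m_i$ (as in Proposition \ref{xxprop2.4}), whose associated graded ring is $K(\{p_i\},\{q_i\},\{0\},M)$; by Proposition \ref{xxprop2.4} this is a domain, and in fact by the analysis there it is $B(n,n,p_1,\dots,p_s,q)$, an iterated skew (Laurent) polynomial extension $k[y,y^{-1}][x,x^{-1};\sigma]$ localized appropriately — hence noetherian. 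Since the associated graded of a filtered ring being noetherian forces the filtered ring to be noetherian, $K$ is noetherian. For part (d), $\GKdim K=\GKdim \gr K$ by standard filtration results \cite{KL}, and $\gr K$ has GK-dimension two by Example \ref{xxex1.2}.

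For part (b), since $x$ is grouplike and $k[x,x^{-1}]$ is a cocommutative pointed Hopf subalgebra, I would show the coradical filtration of $K$ can be computed by assigning to each $y_i$ filtration degree $1$ (using $\Delta(y_i)=y_i\otimes 1+x^{n_i}\otimes y_i$), so that $C_0=k[x,x^{-1}]$; by Lemma \ref{xxlem2.6} the coalgebra structure is that of $K(\{p_i\},\{q_i\},\{0\},M)$, where this is transparent. In particular every simple subcoalgebra is one-dimensional, so $K$ is pointed with $G(K)=\langle x\rangle\cong\mathbb Z$. For part (c), I would exhibit an affine central subalgebra: the elements $x^{\pm M}$ and $y_1^{p_1 p_0}$ (or appropriate powers, using $q_i^{p_i}=1$, $q_i^{n_i}$ primitive $p_i$-th root of unity, and relations (I2.0.10)–(I2.0.12)) are central, and $K$ is a finite module over the subalgebra they generate — finiteness follows from the monomial basis of Lemma \ref{xxlem2.1}, since the $y_i$ exponents beyond $p_i-1$ and the high powers of $x$ are absorbed into the center. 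The key point is that $K$ is PI (indeed this is the setting flagged at the start of Section \ref{xxsec2}), so having an affine center over which it is module-finite is consistent and checkable on the basis.

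For parts (e) and (f), I would again pass through the filtration. The associated graded $\gr K = B(n,n,p_1,\dots,p_s,q)$ is an iterated Ore extension of $k[x^{\pm1}]$ and of a non-smooth affine commutative ring (the coordinate ring of $\operatorname{Spec}$ of the $y_i$-subalgebra, which is a nodal-type curve when $s\ge 2$), so its global dimension is infinite precisely because that commutative piece is singular; but $\injdim$ stays finite because AS-Gorenstein-type properties (being a twisted/skew extension of a Gorenstein ring) are preserved — here I would cite that iterated Ore extensions of Gorenstein rings are Gorenstein and that $\injdim$ passes from $\gr K$ to $K$ via the filtration (Björk-type rigidity). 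This gives $\injdim K=2$. For (f), finite global dimension forces $\gldim K=\GKdim K=2$ by Auslander–Buchsbaum/regularity considerations, and the ring is regular iff $\gr K$ is iff the $y_i$-subalgebra is regular; by the relations $y_i^{p_i}=y_j^{p_j}+(\alpha_j-\alpha_i)(x^M-1)$, when $s\ge 3$ there are too many independent hypersurface relations among the $y_i$ to avoid a singularity, while when $s=2$ and $\alpha_1=\alpha_2$ the relation $y_1^{p_1}=y_2^{p_2}$ defines a cuspidal (hence singular) curve, and when $s=2$, $\alpha_1\ne\alpha_2$ the extra $x^M-1$ term smooths it out — this last claim is the one I expect to require the most care, computing the singular locus of the relevant affine algebra or its localization and matching it against the $s=2$, $\alpha_1\ne\alpha_2$ dichotomy. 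The main obstacle is thus part (f): pinning down exactly when the defining algebra is regular, which amounts to a Jacobian-criterion computation on the presentation, together with the (standard but technical) transfer of finiteness of global and injective dimension across the $\mathbb N$-filtration.
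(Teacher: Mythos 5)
Your outline for parts (a)--(d) is workable and close in spirit to the paper (the paper gets noetherianity even more directly, by writing $K$ as a factor of the iterated Ore extension $k[x^{\pm1}][y_1;\sigma_1]\cdots[y_s;\sigma_s]$, and gets $\GKdim K=2$ from module-finiteness over the central subalgebra $T=k[x^{\pm M},y_1^{p_1}]$ rather than from the associated graded ring). One caveat: Theorem \ref{xxthm2.7} is stated \emph{without} assuming (I2.0.7), so your repeated identification of $\gr K$ with the domain $B(n,n,p_1,\dots,p_s,q)$ via Proposition \ref{xxprop2.4} silently restricts the generality; when $\gcd(p_i,p_j)\neq 1$ the associated graded ring is not a domain and is not of that form, though it is still noetherian by the Ore-extension argument.

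The genuine gap is in part (f). You assert that $K$ is regular iff $\gr K$ is regular iff the $y_i$-subalgebra is regular, and propose to decide this by a Jacobian computation. This equivalence is false, and in the crucial direction: for an $\mathbb N$-filtration one only has $\gldim K\le\gldim \gr K$, and here $\gr K=K(\{p_i\},\{q_i\},\{0\},M)$ \emph{always} contains the singular algebra $k[y_1,y_2]/(y_1^{p_1}-y_2^{p_2})$, so $\gldim\gr K=\infty$ for every choice of parameters. Your criterion would therefore predict $\gldim K=\infty$ in all cases, contradicting the statement you are proving (and your own subsequent remark that the $x^M-1$ term ``smooths it out''--- that term dies in $\gr K$, which is exactly why the associated graded ring cannot see the finiteness). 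The positive case must be handled on $K$ itself: the paper writes $K=\bigoplus_{i=0}^{M-1}x^iA$ as a crossed product $A\star(\mathbb Z/(M))$ over $A=k_{q_2^{n_1}}[y_1,y_2]S^{-1}$, a localization of a quantum plane with $\gldim A=2$, and invokes \cite[Theorem 7.5.6(iii)]{MR}. The infinite-dimensional cases also need real arguments (freeness of $K$ over Hopf subalgebras to push $\pdim k$ down, and \cite[Theorem 3.5]{StZ} for the connected graded algebra arising when $s\ge3$), not the heuristic ``too many hypersurface relations.'' Secondarily, your route to (e) is under-justified: transferring injective dimension from $\gr K$ to $K$ gives only an upper bound, and the claim that the base is Gorenstein requires observing that $k[y_1,\dots,y_s]/(y_i^{p_i}-y_1^{p_1})$ is a complete intersection; the paper avoids all of this by citing \cite[Theorem 0.1]{WZ} (noetherian affine PI Hopf algebras have $\injdim=\GKdim$), which uses the Hopf structure rather than the presentation.
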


\begin{proof}
(a) By definition, $K$ is affine. It is noetherian 
since $K$ is a factor ring of an iterated Ore extension 
$k[x^{\pm 1}][y_1,\sigma_1]\cdots [y_s,\sigma_s]$ where automorphisms
$\sigma_i$ can be read off from relations (I2.0.9)-(I2.0.11).

(b) Using Lemma \ref{xxlem2.1} it can be checked directly that 
$C_0(K)=k[x,x^{-1}]$, so it is pointed.

(c) Let $Z$ be the center of $K$ and let $T$ be the subalgebra 
of $Z$ generated by central elements $y_1^{p_1}, x^{M}$ and $x^{-M}$.
By Lemma \ref{xxlem2.1} 
$$K=\sum_{0\leq w_0<M, 0\leq w_i< p_i, \forall i} T x^{w_0}
y_1^{w_1}\cdots y_s^{w_s}.$$ 
Hence $K$ is finitely generated over its center $Z$ and $Z$ is affine. 

(d) Since $T$ is isomorphic to $k[s,s^{-1}][t]$ which  has GK-dimension
two and since $K$ is finitely generated over $T$, $\GKdim K=2$. 

(e) This is a consequence of (a,d) and \cite[Theorem 0.1]{WZ}.

(f) Suppose $\alpha_i=\alpha_j$ for some $i\neq j$. Without
loss of generality, we may assume that $\alpha_1=\alpha_2$.
Let $K_0$ be the Hopf subalgebra generated by $x^{\pm 1},y_1$
and $y_2$. Then it follows from Lemma \ref{xxlem2.1} that $K$
is a left and a right free $K_0$-module. By \cite[Proposition 2.2(i)]{MR}
$$\pdim k_{K_0}\leq \pdim k_{K}$$
or equivalently, by \cite{LL}, 
$$\gldim K_0\leq \gldim K.$$
Let $Y$ be the subalgebra generated by $y_1$ and $y_2$. Then $K_0
=Y[x,x^{-1}]$. Since $Y$ is a connected graded domain of GK-dimension one
and it is not isomorphic to the polynomial ring $k[y]$,
$\gldim Y=\infty$. By \cite[Theorem 7.5.3(ii)]{MR},
$\gldim K_0=\infty$. Consequently, $\gldim K=\infty$.

Next we consider the case when $s\geq 3$ and $\alpha_i\neq \alpha_j$ 
for all $i\neq j$. We will deal with the case $s=2$ at the
end. Let $K_1$ be the Hopf subalgebra generated by $x^{\pm 1},y_1,y_2$
and $y_3$. It follows from Lemma \ref{xxlem2.1} that $K$
is a left and a right free $K_1$-module. The argument in the previous
paragraph shows that it suffices to show $\gldim K_1=\infty$. In other
words, we may assume $s=3$. Let $A$ be the subalgebra generated by
$y_1,y_2,y_3$ and $x^{\pm M}$. Note that relation (I2.0.12) 
implies that $x^M=1+(\alpha_i-\alpha_j)^{-1}(y_i^{p_i}-y_j^{p_j})$.
From this it is easy to check that
$A$ is isomorphic to $BS^{-1}$ where $B$ is a connected graded 
algebra generated by $y_1,y_2,y_3$ subject to the relations
\begin{enumerate}
\item[(i)]
$y_iy_j=q_j^{n_i} y_jy_i$ for all $i,j$,
\item[(ii)]
$(\alpha_1-\alpha_2)^{-1}(y_1^{p_1}-y_2^{p_2})=
(\alpha_2-\alpha_3)^{-1}(y_2^{p_2}-y_3^{p_3})=
(\alpha_1-\alpha_3)^{-1}(y_1^{p_1}-y_3^{p_3}).$
\end{enumerate}
And $S$ consists of all powers of the elements $\{1+
(\alpha_i-\alpha_j)^{-1}(y_i^{p_i}-y_j^{p_j}) \mid i\neq j\}$. 
Since $B$ is connected graded, PI of GK-dimension two and
since $B$ is not isomorphic to a skew polynomial ring,
$\gldim B=\infty$ \cite[Theorem 3.5]{StZ}
and $\fdim k_B=\pdim k_B=\infty$ where $k_B$ is the
module $B/B_{\geq 0}$. Since $k_BS^{-1}=k_A$ where $k_A$ is a 1-dimensional
right $A$-module. By \cite[Proposition 7.4.2(iii)]{MR}, 
$\fdim k_A=\infty$, so 
$$\gldim A\geq \pdim k_A=\fdim k_A=\infty.$$ 
Finally note that $K$ is a free $A$-module 
$K=\oplus_{i=0}^{nm-1} x^{i} A$ by Lemma 
\ref{xxlem2.1}. Thus $\pdim k_K\geq \pdim k_A=\infty$.

The remaining case is when $s=2$ and $\alpha_1\neq \alpha_2$.
After a scalar change we may assume that $\alpha_1=0$ and 
$\alpha_2=1$. So we have a relation
$$y_1^{p_1}-y_2^{p_2}=1-x^{M}.$$
Let $A$ be the algebra $k_{q_2^{n_1}}[y_1,y_2]S^{-1}$ where $S$ 
consists of all powers of the element $\{1-y_1^{p_1}+y_2^{p_2}\}$. 
Then $A$ has global dimension $2$. The algebra $K$ is a direct 
sum $\oplus_{i=0}^{M-1}x^{i} A$ which is in fact a crossed product 
$A\star ({\mathbb Z}/(M))$. By \cite[Theorem 7.5.6(iii)]{MR},
$\gldim K=\gldim A=2$.
\end{proof}

Theorem \ref{xxthm0.3} is a consequence of Theorem \ref{xxthm2.7}.
Part (f) of Theorem \ref{xxthm2.7} suggests the following questions.

\begin{question}
\label{xxque2.8}
Suppose $H$ is a noetherian affine Hopf algebra of GK-dimension
$n$. 
\begin{enumerate}
\item
Is there a function $f$ of $n$ such that the global 
dimension of $H$ is either infinite or bounded by $f(n)$?
\item
Assume $H$ is a domain (or a prime algebra) with finite 
global dimension. Is there a function $f$ of $n$ such that the 
minimal number of generators of $H$ is bounded by $f(n)$?
\end{enumerate}
\end{question}

\begin{lemma}
\label{xxlem2.9} Let $K$ be as in Theorem \ref{xxthm2.7}.
\begin{enumerate}
\item
If $y$ is a $(1,g)$-primitive elements not in $C_0$, then either
$g=x^{n_i}$ or $g=x^M$ and $y$ is a linear combination of 
$\{y_1,\cdots,y_s,y_1^{M}\}$ modulo $C_0$.
Consequently, the set $\{n_1,\cdots,n_s,M\}$ is an invariant of $K$.
\item
Suppose that $n_1,\cdots,n_s$ are distinct. Then every 
Hopf automorphism $f$ of $K$ is of the form $\phi: x\to x, 
y_i\to c_i y_i$, for all $i$, where $c_i\in k^{\times}$ satisfies 
$c_i^{p_i}=c_j^{p_j}$ for all $i,j$ and $c_i^{p_i}=1$ for all $i$ when 
$\alpha_k\neq \alpha_l$ for some $k,l$.
\item
Suppose $K$ is a domain.
If $K'=K(\{p'_i\}, \{q'_i\}, \{\alpha'_i\}, M')$ is another
algebra and $f: K\to K'$ is a Hopf surjective map. Then $f$ is 
an isomorphism. Up to a permutation of $\{1,2,\cdots,s\}$, there is 
a scalar $c\in k^{\times }$ such that $p'_i=p_i$, $q'_i=q_i$, $\alpha'_i=
c \alpha_i$ for all $i$.
\end{enumerate}
\end{lemma}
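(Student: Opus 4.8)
The plan is to prove the three parts in order, each time exploiting the coalgebra structure (which by Lemma \ref{xxlem2.6} does not see the $\alpha_i$) and the explicit monomial basis of Lemma \ref{xxlem2.1}.

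For part (a), I would first pass to the associated graded coalgebra, or rather work directly with the coradical filtration. Since $C_0=k[x,x^{-1}]$ by Theorem \ref{xxthm2.7}(b), a $(1,g)$-primitive element $y\notin C_0$ lies in $C_1\setminus C_0$, so $g\in G=\langle x\rangle$, say $g=x^t$. Writing $y$ in the basis of Lemma \ref{xxlem2.1} and applying $\Delta(y)=y\otimes 1+x^t\otimes y$ modulo $C_0\otimes C_0$, the key computation is to determine which basis monomials $x^{w_0}y_1^{w_1}\cdots y_s^{w_s}$ can have a skew-primitive component. Using $\Delta(y_i)=y_i\otimes 1+x^{n_i}\otimes y_i$ and $\Delta(y_1^M)$ (which, since $q_1^{M}=q_1^{n_1p_1}=1$ forces $x^{n_1p_1}=x^M$, is $(1,x^M)$-primitive), one sees that the only degree-one-in-$y$ monomials that are skew-primitive modulo $C_0$ are scalar multiples of the $y_i$ (weight $x^{n_i}$) and of $y_1^{p_1}$ — but note $y_1^{p_1}=y_i^{p_i}+(\alpha_1-\alpha_i)(x^M-1)$ has weight $x^M$; higher monomials produce nonprimitive terms. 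Comparing weights gives $g=x^{n_i}$ or $g=x^M$, and the coefficient analysis pins $y$ down modulo $C_0$; the statement that $\{n_1,\dots,n_s,M\}$ is an invariant then follows since weights of skew primitives are preserved by Hopf isomorphism. (One subtlety: if some $n_i=n_j$ or $n_i=M$ the set is understood with multiplicity, but the displayed conclusion is as stated.)

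For part (b), given a Hopf automorphism $f$: it must permute grouplikes, so $f(x)=x^{\pm1}$; composing with $x\mapsto x^{-1}$ is not a Hopf automorphism here (it would invert the coproduct data incompatibly — or one rules it out by the antipode/weight bookkeeping), so $f(x)=x$. Then $f(y_i)$ is $(1,x^{n_i})$-primitive, hence by part (a) — using that the $n_i$ are \emph{distinct} so weight $x^{n_i}$ is achieved only by $y_i$ (and not by $y_1^{p_1}$ unless $n_i=M$, which the distinctness plus $n_i=M/p_i<M$ rules out since $p_i\ge 2$) — we get $f(y_i)=c_iy_i+(\text{element of }C_0)$. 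Applying $f$ to the relation $y_ix=q_ixy_i$ shows the $C_0$-part must be a scalar multiple of $(x^{n_i}-1)$; but then applying $f$ to $y_i^{p_i}-y_j^{p_j}=(\alpha_i-\alpha_j)(x^M-1)$ and comparing leading terms forces the $C_0$-correction to vanish (because $(y_i+\tau(x^{n_i}-1))^{p_i}$ picks up lower-order cross terms that cannot cancel), giving $f(y_i)=c_iy_i$. The same relation then yields $c_i^{p_i}=c_j^{p_j}$ for all $i,j$, and when some $\alpha_k\neq\alpha_l$, matching the coefficient of $(x^M-1)$ forces $c_i^{p_i}\alpha_i-c_j^{p_j}\alpha_j=\alpha_i-\alpha_j$, which together with $c_i^{p_i}=c_j^{p_j}$ forces $c_i^{p_i}=1$.

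For part (c), a surjective Hopf map $f\colon K\to K'$ between two such algebras: since $K$ is a domain of GK-dimension two and $K'$ is a domain (being of the same form, check it is a domain or use that the image of a domain need not be, so instead argue $\ker f$ is a Hopf ideal and $\GKdim K'\le 2$; if $\GKdim K'<2$ then $K'$ is commutative of GK-dimension $\le1$, but $K'$ contains $y_1'$ with $y_1'x'=q_1'x'y_1'$ and $q_1'\ne 1$, contradiction), $f$ is injective by GK-dimension count on the domain $K$, hence an isomorphism. Then transport of structure: $f(x)$ is grouplike, $f(x)=x'$ (after ruling out $x'^{-1}$ as before); $f(y_i)$ is $(1,x'^{n'_j})$-primitive for some $j$, so by part (a) applied to $K'$, the multiset $\{n_1,\dots,n_s\}$ equals $\{n'_1,\dots,n'_s\}$, and since $n_i=M/p_i$ this forces $M=M'$ and a permutation matching $p_i=p'_{\pi(i)}$. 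Then $f(y_i)=c_iy'_{\pi(i)}$ up to a $C_0$-term which vanishes as in part (b); comparing $y_ix=q_ixy_i$ gives $q_i=q'_{\pi(i)}$, and comparing the $(I2.0.12)$ relations gives $c_i^{p_i}(\alpha'_{\pi(i)}-\alpha'_{\pi(j)})=\alpha_i-\alpha_j$; absorbing the $c_i$ (which are forced to satisfy $c_i^{p_i}$ all equal, $=c$ say) yields $\alpha'_i=c\alpha_i$ up to the permutation, after normalizing $\alpha_1=\alpha'_1=0$.

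The main obstacle I anticipate is the bookkeeping in part (b)/(c) showing the $C_0$-correction term in $f(y_i)$ must vanish: one has a priori $f(y_i)=c_iy_i+\tau_i(x^{n_i}-1)$, and one must propagate this through relation $(I2.0.12)$, where raising to the $p_i$-th power interacts with the $q_i$-commutation; the cancellation is real but requires care with the $q$-binomial coefficients and the fact that $q_i$ is a primitive $p_i$-th root of unity (so $(a+b)^{p_i}=a^{p_i}+b^{p_i}$ for the relevant $q_i$-commuting pair, which is exactly why the correction can be absorbed rather than obstructing). Everything else is routine once part (a) is in hand.
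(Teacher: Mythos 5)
Your proposal is correct and follows essentially the same route as the paper: part (a) by comparing coefficients of the coproduct against the monomial basis of Lemma \ref{xxlem2.1} (the paper merely organizes this as an induction on $s$, peeling off powers of $y_s$ over the Hopf subalgebra $K_{s-1}$ and using the nonvanishing of the $q_s$-binomial coefficients ${w\choose 1}_{q_s}$ for $w<p_s$), part (b) by using (a) to write $f(y_i)=c_iy_i+d_i(x^{n_i}-1)$ and then killing $d_i$ and constraining $c_i^{p_i}$ via the relation $y_i^{p_i}=y_1^{p_1}+\alpha_i(x^M-1)$, and part (c) by the GK-dimension count $\GKdim K=\GKdim K'=2$ on the domain $K$ to get injectivity, followed by the same transport-of-structure as in (b). The only blemish is the closing sentence's suggestion that the $C_0$-correction is ``absorbed'' rather than forced to vanish — it does vanish, as your earlier cross-term argument (and, even more directly, applying $f$ to $y_ix=q_ixy_i$ with $q_i\neq 1$) shows.
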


\begin{proof}
(a) We use induction on $s$. When $s=0$, the statement is trivial. 
Suppose now $s\geq 1$ and assume that the assertion holds for 
$K_{s-1}$ where $K_{s-1}$ is the Hopf subalgebra generated by 
$x^{\pm 1}, y_1,\cdots,y_{s-1}$. Let $F$ be a skew $(1,g)$-primitive 
element in $K$ but not in $K_{s-1}$ where $g$ is a grouplike
element. Write $F=\sum_{i=0}^{w}f_i y_s^{i}$ where $f_i\in K_{s-1}$ 
and $w<p_s$ and $f_w\neq 0$. If $w>1$, then 
$$\begin{aligned}
\Delta(F)&=\Delta(f_w y_s^{w}+\sum_{i=0}^{w-1} f_i y_s^{i})\\
&=\Delta(f_w)(y_s^w\otimes 1+ \sum_{j=1}^{w-1} 
{w\choose j}_{q_s} (x^{n_s j} y_s^j \otimes y_s^{w-j})
+x^{n_sw}\otimes y_s^w)+\Delta(\sum_{i=0}^{w-1} f_i y_s^{i}),\\
{\text{and   }} &{\text {since $F$ is $(1,g)$-primitive, }}\\
\Delta(F)&=F\otimes 1+ g\otimes F=(\sum_{i=0}^{w}f_i y_s^{i})\otimes 1+
g\otimes (\sum_{i=0}^{w}f_i y_s^{i}).
\end{aligned}
$$
Since $K$ is free over $K_{s-1}$ with basis $1,y_s,\cdots,y_s^{p_s}$,
we have ${w\choose 1}_{q_s} \Delta(f_w)(x^{n_s}\otimes 1)=0$
by comparing the coefficient of the term $y_s\otimes y_s^{w-1}$.
Since ${w\choose 1}_{q_s}(x^{n_s}\otimes 1)$ is invertible,
$\Delta(f_w)=0$. Consequently, $f_w=0$, yielding a contradiction. 
Therefore $w=1$ and $F=f_0+f_1 y_s$. Then 
$\Delta(F)=F\otimes 1+ g\otimes F$ implies that
$$\begin{aligned}
\Delta(f_1)&=f_1\otimes 1,\\
\Delta(f_1) (x^{n_s}\otimes 1)&=g\otimes f_1,\\
\Delta(f_0)&=f_0\otimes 1+g\otimes f_0.
\end{aligned}
$$
These equations imply that $f_1\in k$, $g=x^{n_s}$ and $f_0$ is a
$(1,g)$-primitive. The assertion follows by induction.

(b) Let $\phi$ be any Hopf automorphism of $K$. Since $\{n_i\}$
are distinct, it follows from part (a) that every 
$(1,x^{n_i})$-primitive element is of the form 
$c y_i+d (x^{n_i}-1)$ for some $c,d\in k$. Hence $\phi$ sends
$y_i$ to $c_i y_i+d_i (x^{n_i}-1)$ for $i=1,\cdots,s$ and
sends $y_1^{p_1}$ to $c y_1^{p_1}+d (x^M-1)$. The equation
$$c y_1^{p_1}+d (x^M-1)=\phi(y_1^{p_1})=\phi(y_1)^{p_1}=
(c_1 y_1+d_1 (x^{n_1}-1))^{p_1}$$
implies that $d=d_1=0$ and $c=c_1^{p_1}$. Similar, $d_i=0$
and $c_i^{p_i}=c$ for all $i$. Applying $\phi$ to $\Delta(y_i)$
we see that $\phi(x^{n_i})=x^{n_i}$, which implies that 
$\phi$ is an identity on $C_0$. If $\alpha_k\neq \alpha_l$ for 
some $k<l$, then $c_i^p=c=1$ by the equation (I2.1.6).
The assertion follows.

(c) When $K$ is a domain, condition (I2.0.7) implies
that $\{n_i\}$ are distinct. 
If $f$ is surjective, then $f$ is also injective since
$K$ is a domain and $\GKdim K=\GKdim K'=2$. Hence $f$ is 
an isomorphism. Similar to the proof of (b), one sees that
$f$ sends $x$ to $x$, $y_i$ to $c_i y_i$ up to a permutation. 
Thus $\{p_i\}=\{p'_i\}$, $\{q_i\}=\{q'_i\}$, $M=M'$, and 
$c_i^{p_i}=c_j^{p_j}$ for all $i,j$.
We may assume that $\alpha_1=\alpha'_1=0$. Then $\alpha'_j=c
\alpha_j$ where $c=c_j^{p_j}=c_i^{p_i}$ for all $i,j$.
\end{proof}

\section{Preliminary analysis on skew primitive elements}
\label{xxsec3}

It remains to prove Theorem \ref{xxthm0.1} and Corollary \ref{xxcor0.2}
in the rest of the paper. A basic idea is to analyze skew primitive
elements in more details. The analysis sometimes is tedious
but necessary, and will be useful for the study of pointed
Hopf algebras of GK-dimension three or higher. 

Let $H$ be a pointed Hopf algebra and let $C_0$ denote the coradical 
of $H$. We will need to use some concepts introduced in \cite{WZZ1}. 
Suppose $y$ is a skew primitive element with $\Delta(y)=y\otimes 1+
g\otimes y$ where the grouplike element $g(=\mu(y))$ is the weight 
of $y$. Let $T_{g^{-1}}$ denote the inverse conjugation by $g$ 
sending $a\to g^{-1} ag$ for all $a\in H$. A nonzero scalar $\lambda$ 
is called the {\it commutator} of $y$ of level $n$ (or more 
generally of finite level) if 
$$(T_{g^{-1}}-\lambda Id_H)^n(y)\in C_0, \quad {\text{ and}}\quad 
(T_{g^{-1}}-\lambda Id_H)^{n-1}(y)\not\in C_0.$$  
If the commutator of $y$ of finite level exists, it is denoted by 
$\gamma(y)$. When $n=1$, $\gamma(y)$ is called the {\it commutator} 
of $y$. The weight commutator of $y$ is the pair $(\mu(y),\gamma(y))$
which is denoted by $\omega(y)$. 

Given a grouplike element $g$, let $P_{g,*,*}$ denote the span of
all $(1,g)$-primitive elements in $H$. It is clear that $P_{g,*,*}
\cap C_0=k(g-1)$. Let $P'_{g,*,*}$ denote the quotient space 
$P_{g,*,*}/k(g-1)$. Given a nonzero scalar $\lambda$ and an integer 
$n$, let $P_{g,\lambda, n}$ denote the span of all $(1,g)$-primitive 
elements having commutator $\lambda$ of level at most $n$. Let 
$P_{g,\lambda,*}$ be the union of $P_{g,\lambda, n}$ for all $n$. 
Similarly let $P'_{g,\lambda, n}$ (respectively, $P'_{g,\lambda,*}$)
denote $P_{g,\lambda, n}/k(g-1)$ (respectively, $P_{g,\lambda,*}/k(g-1)$).
Note that $P_{g,\lambda, 0}=k(g-1)$ and whence $P'_{g,\lambda, 0}=0$. 
The total space of nontrivial skew primitive elements is defined to be
\begin{equation}
\label{I3.0.1}\tag{I3.0.1}
P'_T:=\bigoplus_{g} P'_{g,*,*}=\bigoplus_{g}\bigoplus_{\lambda}
P'_{g,\lambda,*}
\end{equation}
(see Lemma \ref{xxlem3.2}(b)). 
Let $R_n$ be the set of primitive $n$th roots of unity in $k$ and 
let $\sqrt{\;}$ denote $\bigcup_{n\geq 2} R_n$ -- the set of all 
roots of unity in $k$ which is not 1. Let 
$$P'_M:=\bigoplus_{g} \bigoplus_{\lambda \not\in \sqrt{\;}} 
\; P'_{g,\lambda, *}.$$

\begin{definition} 
\label{xxdefn3.1}
\begin{enumerate}
\item
If $y\in P_{g,\lambda,*}\setminus C_0$ for some $\lambda\not\in 
\sqrt{\;}$, then $y$ is called a  {\it major skew primitive 
element}, $g$ is called a {\it major weight} of $H$ and $\lambda$ 
a {\it major commutator} of $H$. 
\item
If $P'_M$ is 1-dimensional, then there is only one grouplike element
$g$ and only one scalar $\gamma\not\in \sqrt{\;}$ such that
$$P'_M=P'_{g,\lambda,*}=P'_{g,\lambda,1}.$$ 
In this case the major weight $g$ is unique and the major 
commutator $\lambda$ is also unique. For simplicity, we say $H$ has a 
unique major skew primitive element in this case. 
\end{enumerate}
\end{definition}

By definition, when $H$ has a unique major skew primitive element, 
two major skew primitive elements are 
linearly dependent in the quotient space $P'_M$.

The major weights play a special role connecting non-major weights.

\begin{lemma}
\label{xxlem3.2} Suppose $\GKdim H<\infty$.
\begin{enumerate}
\item
$P_{g,*,*}$ is a sum of $P_{g,\lambda,*}$ for all $\lambda\in k^{\times}$. 
\item
$P'_{g,*,*}$ is a direct sum of $P'_{g,\lambda,*}$ for all 
$\lambda\in k^{\times}$.
\item
$y\in P_{g,\lambda, 1}$ means that $y$ is $(1,g)$-primitive and 
$g^{-1} y g= \lambda y+\tau(g-1)$ for some $\tau\in k$. 
\item
If $P_{g,\lambda, n}=P_{g,\lambda, n+1}$ for some $n$, then 
$P_{g,\lambda, n}=P_{g,\lambda, *}$. A similar statement holds when $P$
is replaced by $P'$.
\end{enumerate}
\end{lemma}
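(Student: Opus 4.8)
\textbf{Proof plan for Lemma \ref{xxlem3.2}.}

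The plan is to prove the four assertions essentially in the order stated, since (a) feeds into (b) and the generalized-eigenspace machinery is common to all of them. The overarching idea is that the inverse conjugation operator $T_{g^{-1}}$ acts on the finite-dimensional (this is the crux — see below) space $P'_{g,*,*}$, and the $P'_{g,\lambda,*}$ are precisely its generalized eigenspaces.

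\emph{Step 1: finiteness of $P'_{g,*,*}$.} First I would observe that for a fixed grouplike $g$, the space $P'_{g,*,*}$ is finite-dimensional. The reason is that $\GKdim H<\infty$, together with the structure theory for skew primitives in \cite{WZZ1} (each nontrivial $(1,g)$-primitive, after the usual normalization, has commutator a root of unity or contributes genuinely to GK-dimension): an infinite-dimensional space of $(1,g)$-primitives modulo $k(g-1)$ would force $\GKdim H=\infty$ by the arguments of \cite{WZZ1} (compare the proof of Lemma \ref{xxlem1.6}). I expect this to be the main obstacle: pinning down exactly which citation from \cite{WZZ1} gives the needed bound and verifying the normalizations line up, since everything downstream (the direct-sum decomposition, the stabilization in (d)) depends on having a finite-dimensional space on which a single linear operator acts.

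\emph{Step 2: parts (a) and (b).} On $P'_{g,*,*}$ the operator $\bar T := T_{g^{-1}}$ induced by $T_{g^{-1}}$ is a well-defined linear endomorphism — it preserves $(1,g)$-primitivity because $g$ is grouplike, and it preserves $k(g-1)$. Since $k$ is algebraically closed, the generalized-eigenspace (primary) decomposition gives $P'_{g,*,*}=\bigoplus_\lambda \ker(\bar T-\lambda\,\mathrm{id})^N$ for $N$ large. By the very definition of the commutator of level $n$, the subspace $P'_{g,\lambda,*}$ is exactly $\bigcup_n\ker(\bar T-\lambda\,\mathrm{id})^n=\ker(\bar T-\lambda\,\mathrm{id})^N$ (using finite dimension), and $P'_{g,\lambda,0}=0$ unless $\lambda$ is an eigenvalue. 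So (b) is immediate, and (a) follows by lifting: given a nontrivial $(1,g)$-primitive $y$, write its class $\bar y=\sum_\lambda \bar y_\lambda$ with $\bar y_\lambda\in P'_{g,\lambda,*}$, lift each $\bar y_\lambda$ to a $(1,g)$-primitive $y_\lambda\in P_{g,\lambda,*}$, and then $y-\sum y_\lambda\in k(g-1)\subseteq P_{g,1,*}\subseteq\sum_\lambda P_{g,\lambda,*}$.

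\emph{Step 3: part (c).} This is just unwinding the $n=1$ case of the definition. If $y$ is $(1,g)$-primitive and $(T_{g^{-1}}-\lambda\,\mathrm{id})(y)\in C_0$, then $(T_{g^{-1}}-\lambda\,\mathrm{id})(y)$ is itself a $(1,g)$-primitive element (same reasoning: $T_{g^{-1}}$ and scaling preserve $(1,g)$-primitivity) lying in $C_0$, hence lies in $P_{g,*,*}\cap C_0=k(g-1)$; thus $g^{-1}yg-\lambda y=\tau(g-1)$ for some $\tau\in k$, which is the displayed relation. Conversely that relation says $(T_{g^{-1}}-\lambda\,\mathrm{id})(y)=\tau(g-1)\in C_0$, i.e. $y\in P_{g,\lambda,1}$ (and $y\notin C_0$ is the nontriviality built into "level $1$" unless $y$ is trivial, in which case $\lambda$ may be taken arbitrarily — this degenerate point I would either ignore or note in passing).

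\emph{Step 4: part (d).} Here I would argue purely from linear algebra on the filtration $\ker(\bar T-\lambda\,\mathrm{id})\subseteq\ker(\bar T-\lambda\,\mathrm{id})^2\subseteq\cdots$ of the finite-dimensional space $P'_{g,\lambda,*}$: these kernels are $P'_{g,\lambda,n}$, and a standard fact about ascending kernels of powers of an endomorphism is that once two consecutive terms agree, $\ker(\bar T-\lambda)^n=\ker(\bar T-\lambda)^{n+1}$, then all later terms agree too — if $v\in\ker(\bar T-\lambda)^{n+2}$ then $(\bar T-\lambda)v\in\ker(\bar T-\lambda)^{n+1}=\ker(\bar T-\lambda)^n$, so $v\in\ker(\bar T-\lambda)^{n+1}=\ker(\bar T-\lambda)^n$, and induction finishes it. Hence $P'_{g,\lambda,n}=P'_{g,\lambda,*}$. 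For the $P$ (rather than $P'$) version, add back $k(g-1)$: since $P_{g,\lambda,n}/k(g-1)=P'_{g,\lambda,n}$ for all $n\geq 0$ and $k(g-1)=P_{g,\lambda,0}\subseteq P_{g,\lambda,n}$, equality upstairs follows from equality downstairs. This step is routine once Step 1 has supplied finite-dimensionality; without it, (d) would need an independent stabilization argument, which is another reason I flag Step 1 as the real content.
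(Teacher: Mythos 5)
Your Step 1 is the problem: the claim that $\GKdim H<\infty$ forces $\dim P'_{g,*,*}<\infty$ is false, and your justification for it ("an infinite-dimensional space of $(1,g)$-primitives modulo $k(g-1)$ would force $\GKdim H=\infty$") does not hold. For a counterexample, take $G={\mathbb Z}=\langle x\rangle$ and $V=\bigoplus_{i\geq 1}kv_i$ an infinite-dimensional Yetter--Drinfel'd module with $\delta(v_i)=x\otimes v_i$ and $x*v_i=-v_i$; the Nichols algebra is the quantum exterior algebra, and the bosonization $H={\mathcal B}(V)\#k{\mathbb Z}$ is a pointed Hopf algebra in which every affine subalgebra is a finite module over $k[x^{\pm1}]$, so $\GKdim H=1$, while the $v_i$ give infinitely many linearly independent classes in $P'_{x,-1,1}$. (The lemma assumes only $\GKdim H<\infty$, not that $H$ is a domain, so this example is in scope.) Since your proofs of (a) and (b) both run through the primary decomposition of a finite-dimensional space, the argument for (a) collapses. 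What (a) actually requires is not global finite-dimensionality but \emph{local} finiteness of $T_{g^{-1}}$ on $P_{g,*,*}$ --- that each cyclic subspace $k(g-1)+\sum_n k\,T_{g^{-1}}^n(y)$ is finite-dimensional, so that each individual $y$ decomposes into generalized eigenvectors. That local finiteness is a genuine theorem about Hopf algebras of finite GK-dimension (a shift-like action of $g$ on skew primitives must be excluded), and it is precisely what the paper outsources: its entire proof of (a) is the citation \cite[Lemma 3.7(a)]{WZZ1}. You correctly flagged this as "the real content," but the route you propose to it is not available.

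The rest is essentially fine, and in fact needs less than you think. Part (b) does not require finite-dimensionality at all: the sum of generalized eigenspaces of any linear operator on any vector space over distinct eigenvalues is automatically direct, which is exactly the one-line intersection argument the paper gives ($y\in P_{g,\lambda,*}\cap\sum_{\lambda'\neq\lambda}P_{g,\lambda',*}$ forces $y\in k(g-1)$), with (b) then following from (a). Part (c) is the unwinding you describe; the paper calls it easy. For part (d), the kernel-stabilization argument you wrote out ($v\in\ker(\bar T-\lambda)^{n+2}$ implies $(\bar T-\lambda)v\in\ker(\bar T-\lambda)^{n+1}=\ker(\bar T-\lambda)^n$, hence $v\in\ker(\bar T-\lambda)^{n+1}$) is already dimension-free, so your closing worry that (d) "would need an independent stabilization argument" without Step 1 is unfounded --- you have already given that independent argument. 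In short: repair the proposal by replacing Step 1 with the local-finiteness statement (or the citation to \cite[Lemma 3.7(a)]{WZZ1}) and by proving directness in (b) by the general eigenspace argument rather than via a global primary decomposition.
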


\begin{proof} (a) This is \cite[Lemma 3.7(a)]{WZZ1}, see also its proof.

(b) If $y\in P_{g,\lambda,*}\cap \sum_{\lambda'\neq \lambda}
P_{g,\lambda',*}$, then $y\in k(g-1)$. Hence $\sum_{\lambda }
P'_{g,\lambda,*}$ is a direct sum. The assertion follows from part (a).

(c,d) Easy.
\end{proof}

\begin{lemma}
\label{xxlem3.3} In parts (b) and (c) suppose that the coradical $C_0$ 
of $H$ is commutative and that $\GKdim H< \GKdim C_0+2<\infty$.
\begin{enumerate}
\item
Suppose $y$ is a nontrivial skew primitive element in 
Hopf domain $H$ such that $\gamma(y)$ is a primitive 
$p$th root of unity for some $p>1$. Then $y_0:=y^p$ is a major 
skew primitive element (after choosing $y$ properly). As a 
consequence, $\mu(y)^p$ is a major weight and $1$ is a major 
commutator and $\mu(y_0)^{-1} y_0\mu(y_0) =y_0$.
\item
The dimension of $P'_M$ is at most one. 
As a consequence, there is at most one pair $(g,\lambda)$ with 
$\lambda\not\in \sqrt{\;}$ such that $\dim P'_{g,\lambda, *}\neq 0$;
and in this case, $\dim P'_{g,\lambda, *}=\dim P'_{g,\lambda, 1}=1$.
\item
If $H$ is a pointed Hopf domain, then the dimension  of $P'_M$ is 1 
and the major weight and the major commutator are unique. Consequently,
there is exactly one pair $(g,\lambda)$ with $\lambda\not\in 
\sqrt{\;}$ such that $\dim P'_{g,\lambda, *}\neq 0$; and further,
$\dim P'_{g,\lambda, *}=\dim P'_{g,\lambda, 1}=1$.
\end{enumerate}
\end{lemma}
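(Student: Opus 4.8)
\textbf{Proof proposal for Lemma \ref{xxlem3.3}.}

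The plan is to treat the three parts in order, with (a) supplying the key mechanism that makes (b) and (c) work. For part (a), I would start from a nontrivial skew primitive $y$ with $\mu(y)=g$ and $\gamma(y)$ a primitive $p$th root of unity, $p>1$. Using \cite[Lemma 2.5]{WZZ1} (as in the proof of Lemma \ref{xxlem1.6}) I would first normalize $y$ so that $g^{-1}yg=\lambda y$ exactly, where $\lambda=\gamma(y)$. Then the Hopf subalgebra $K$ generated by $g^{\pm1}$ and $y$ is a quotient of $A(1,\lambda)$, hence (being a noncommutative domain, by \cite[Lemma 4.5]{GZ}) isomorphic to $A(1,\lambda)$. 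In $A(1,\lambda)$ with $\lambda$ a primitive $p$th root of unity, $y^p$ is central and one checks directly from $\Delta(y)=y\otimes 1+g\otimes y$ together with the $q$-binomial formula that $\Delta(y^p)=y^p\otimes 1+g^p\otimes y^p$ (the intermediate $q$-binomial coefficients $\binom{p}{j}_\lambda$ vanish). So $y_0:=y^p$ is $(1,g^p)$-primitive; it is nontrivial because it is not in $C_0$ (it is algebraically independent from $C_0$ inside $K\cong A(1,\lambda)$, otherwise $\GKdim K$ would drop). Since $g^{-p}y_0g^p=y_0$, its commutator is $1$, which is not a root of unity, so $y_0$ is a major skew primitive element by Definition \ref{xxdefn3.1}(a), giving that $g^p=\mu(y_0)$ is a major weight and $1$ a major commutator.

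For part (b) I want to bound $\dim P'_M\le1$. Suppose toward a contradiction that $P'_M$ contains two linearly independent classes. By \eqref{I3.0.1} and Lemma \ref{xxlem3.2}(b), $P'_M$ decomposes along pairs $(g,\lambda)$ with $\lambda\notin\sqrt{\ }$, so I get two major skew primitive elements $u,v$ (with weight-commutator pairs not forcing $u,v$ proportional). Each of $u,v$ generates, with its weight, a Hopf subalgebra which is either commutative or of the form $A(1,\lambda)$; since $\lambda$ is not a root of unity here, in the noncommutative case $u$ (resp.\ $v$) together with its weight yields a copy of $A(1,\lambda)$ of GK-dimension $2$ inside $H$. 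The idea is that two independent such elements would force $\GKdim H\ge\GKdim C_0+2$: I would pass to the subalgebra generated by $C_0$, $u$, and $v$ and show, using the basis/filtration techniques of \cite{WZZ1} (this is essentially the content of \cite[Theorem 0.2]{WZZ1}), that each major skew primitive that is not ``seen'' by $C_0$ contributes an extra $1$ to the GK-dimension. If instead $u,v$ have the same weight $g$ and the same commutator $\lambda$ but are independent in $P'_{g,\lambda,*}$, a similar count inside the Hopf subalgebra they generate again pushes $\GKdim$ up by $2$ over $\GKdim C_0$. Either way this contradicts $\GKdim H<\GKdim C_0+2$. Once $\dim P'_M\le1$, the ``consequence'' clauses are immediate: there is at most one pair $(g,\lambda)$ with $\lambda\notin\sqrt{\ }$ and $P'_{g,\lambda,*}\neq0$, and for that pair $\dim P'_{g,\lambda,*}=\dim P'_{g,\lambda,1}=1$ — the equality $P'_{g,\lambda,*}=P'_{g,\lambda,1}$ follows from Lemma \ref{xxlem3.2}(d) since a strictly longer filtration step would again produce a second independent class.

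For part (c) it remains to show $\dim P'_M\ge1$ when $H$ is a pointed Hopf domain (with the standing hypotheses $C_0$ commutative, $\GKdim C_0<\GKdim H<\GKdim C_0+2$). Since $\GKdim H>\GKdim C_0$, Lemma \ref{xxlem1.6} gives $H\neq C_0$, so there is a nontrivial skew primitive $y\in C_1\setminus C_0$ with $g^{-1}yg=\lambda y+\tau(g-1)$. If $\lambda\notin\sqrt{\ }$ then $y$ itself is a major skew primitive element and we are done. If $\lambda\in\sqrt{\ }$, i.e.\ $\lambda$ is a primitive $p$th root of unity with $p\ge2$ (the case $\lambda=1$ being excluded because, as in Lemma \ref{xxlem1.6}, a genuine primitive element together with $C_0$ would already force $\GKdim H\ge\GKdim C_0+2$ unless handled — here I would invoke the same argument to rule $\lambda=1$ out, or note that $\lambda=1$ is itself not in $\sqrt{\ }$ and puts us in the previous case), then part (a) applies and $y_0=y^p$ is a major skew primitive element, so $\dim P'_M\ge1$. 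Combining with part (b) gives $\dim P'_M=1$, hence uniqueness of the major weight and major commutator, and the final dimension equalities as in (b). The main obstacle I anticipate is the GK-dimension counting in part (b): making precise that two ``independent'' major skew primitives each raise GK-dimension by one over $\GKdim C_0$ requires the careful filtered-algebra bookkeeping from \cite{WZZ1}, and one must be attentive to the subcase where the two elements share a weight, where independence is in the single space $P'_{g,\lambda,*}$ rather than across different pairs.
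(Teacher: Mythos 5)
Your parts (a) and (c) follow essentially the same route as the paper: normalize $y$ so that $g^{-1}yg=\lambda y$, identify the Hopf subalgebra $\langle g^{\pm1},y\rangle$ with $A(1,\lambda)$ via \cite[Lemma 4.5]{GZ}, conclude that $y^p$ is a nontrivial $(1,g^p)$-primitive with commutator $1$, and then in (c) argue that $P'_M=0$ would force every nontrivial skew primitive to have commutator in $\sqrt{\;}$, whence (a) manufactures a major one — a contradiction. (One small wording slip: $1$ \emph{is} a root of unity; what makes $y^p$ major is that $1\notin\sqrt{\;}$ by the paper's definition, which is the option you correctly fall back on for the $\lambda=1$ case.)

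The real divergence, and the genuine gap, is in part (b). The paper's proof is a one-line application of \cite[Theorem 3.9]{WZZ1}, which states precisely that the span $Y_*$ of all skew primitives with commutator not in $\sqrt{\;}$ satisfies $\dim Y_*/(Y_*\cap C_0)\le \GKdim H-\GKdim C_0$; combined with the hypothesis $\GKdim H<\GKdim C_0+2$ this gives $\dim P'_M\le 1$ immediately. You instead propose to derive the bound by passing to the subalgebra generated by $C_0$, $u$, $v$ and arguing that ``each major skew primitive contributes an extra $1$ to the GK-dimension,'' citing \cite[Theorem 0.2]{WZZ1}. But that theorem (as used in Lemma \ref{xxlem1.6}) only yields the single-step bound $\GKdim H\ge\GKdim C_0+1$ from one such element; iterating it to get $+2$ from two independent elements is not automatic, and is exactly the content of \cite[Theorem 3.9]{WZZ1} that you would need to reprove. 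The subcase you flag yourself — $u,v$ independent inside a single $P'_{g,\lambda,*}$ with the same weight and commutator — is the one where a naive ``adjoin one element at a time'' count is hardest to justify, since the second element does not visibly enlarge the coradical or produce a new Ore-extension layer in an obvious way. As written, part (b) is a plan rather than a proof; the fix is simply to invoke the correct quantitative lower-bound theorem from \cite{WZZ1} rather than reconstructing it. The remaining deductions in (b) (uniqueness of the pair $(g,\lambda)$ and $P'_{g,\lambda,*}=P'_{g,\lambda,1}$ via Lemma \ref{xxlem3.2}(d)) are fine once $\dim P'_M\le1$ is in hand.
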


\begin{proof} 
(a) By Lemma \ref{xxlem3.2}(d), $P_{g,\lambda,1}\neq P_{g,\lambda,0}$
where $(g,\lambda)$ denotes $(\mu(y),\gamma(y))$. By choosing a 
different $y$ if necessary, we have $y\in P_{g,\lambda,1}\setminus C_0$. 
By Lemma \ref{xxlem3.2}(c), there is a $\tau\in k$ such that
$$g^{-1}y g=\lambda y+\tau(g-1).$$
Since $\lambda\neq 1$, we may assume $\tau=0$ after replacing $y$ by 
$y+{\frac{\tau}{1-\lambda}}(g-1)$. Let $H'$ be the Hopf subalgebra 
generated by $g^{\pm 1}$ and $y$. Since $H$ is a domain, so is $H'$. Since 
$H'$ is noncommutative (as $g^{-1}yg=\lambda y$), $\GKdim H'\geq 2$ 
by \cite[Lemma 4.5]{GZ}. It is easy to compute that $\GKdim H'\leq 2$. 
Then $H'$ is isomorphic to $A(1,\lambda)$ defined in Example 
\ref{xxex1.1}. Since $\lambda$ is a primitive $p$th root for some $p>1$, 
$y^p$ is a skew primitive with $\mu(y^p)=g^p$ and $\gamma(y^p)=1$. In 
the Hopf algebra $H'(\cong A(1,\lambda))$, $y^p$ is not in its coradical, 
or equivalently, $y^p\not\in k(g^p-1)$. Hence $y^p$ is not in the 
coradical of $H$. Therefore $y^p$ is a major skew primitive element. 
The consequence is clear. 

(b) Let $Y_*$ be the $k$-linear space spanned by all skew primitive 
elements $y$ such that the commutator of $y$ is not in $\sqrt{\;}$. 
By \cite[Theorem 3.9]{WZZ1},
$$\dim Y_*/(Y_*\cap C_0)\leq \GKdim H-\GKdim C_0<2$$
where the last inequality is the hypothesis. 
Since $\dim Y_*/(Y_*\cap C_0)$ is an integer, it is at most 1.
It is easy to see that 
$$Y_*/(Y_*\cap C_0)\cong 
\bigoplus_{g} \bigoplus_{\lambda \not\in \sqrt{\;}} 
\; P'_{g,\lambda, *}=P'_M.$$
The assertions follow easily. 

(c) By part (b) it is suffices to show that $P'_M\neq 0$. Suppose 
that on contrary $P'_M=0$. Since $H$ is pointed and $C_0\neq H$, there 
is a nontrivial skew primitive element $y$ in some $P_{g,\lambda,*}$ 
where $g=\mu(y)$. Since $P'_M=0$, the commutator $\lambda$ is in 
$\sqrt{\;}$. By part (a), $y^p$ is a major skew primitive element 
which is not in $C_0$. So $P'_M\neq 0$, a contradiction.
\end{proof}

\begin{lemma} 
\label{xxlem3.4}
Let $A$ be a locally PI domain.
\begin{enumerate}
\item
$A$ is an Ore domain and the quotient division ring $Q(A)$
of $A$ is locally PI.
\item
For each nonzero scalar $\lambda$, there are no nonzero elements 
$\{g, \alpha, \beta\}$ in $A$ such that $\alpha g= \lambda 
g\alpha+\beta$ and $\beta g=\lambda g\beta$.
\end{enumerate}
\end{lemma}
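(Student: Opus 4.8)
For both parts the plan is to push the problem down to affine subalgebras, where the hypothesis "locally PI" gives us a genuine PI ring and Posner's theorem becomes available. In (a), for the Ore condition I take nonzero $a,b\in A$; the subalgebra $B=k\langle a,b\rangle$ is affine, hence PI, hence a PI domain, and a PI domain is right and left Ore (Posner's theorem; see \cite{MR}). So $aB\cap bB\neq 0$, whence $aA\cap bA\neq 0$, and symmetrically on the left; thus $A$ is an Ore domain and $Q(A)$ exists. For the claim that $Q(A)$ is locally PI, let $C=k\langle q_1,\dots,q_n\rangle$ be an affine subalgebra of $Q(A)$. Using the Ore condition in $A$ one produces a common right denominator, i.e. elements $c_1,\dots,c_n,b\in A$ with $b\neq 0$ and $q_i=c_ib^{-1}$ for all $i$ (iterate the two-term case: if $b_1c=b_2d\neq 0$ then $b_1^{-1}=cb^{-1}$ and $b_2^{-1}=db^{-1}$ for $b=b_1c$). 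Put $D=k\langle c_1,\dots,c_n,b\rangle\subseteq A$; it is affine, hence a PI domain, so $Q(D)$ is a division ring finite-dimensional over its centre, in particular PI. The inclusion $D\hookrightarrow Q(A)$ inverts every nonzero element of $D$, hence factors through $Q(D)$, giving an embedding $Q(D)\hookrightarrow Q(A)$ whose image contains each $q_i=c_ib^{-1}$; so $C\subseteq Q(D)$ and $C$ is PI, being a subalgebra of the PI ring $Q(D)$.

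For (b), suppose for contradiction that $g,\alpha,\beta\in A$ are nonzero with $\alpha g=\lambda g\alpha+\beta$ and $\beta g=\lambda g\beta$. Replacing $A$ by $k\langle g,\alpha,\beta\rangle$, we may assume $A$ is a PI domain; set $D=Q(A)$, a division ring finite-dimensional over its centre $F$, and note $F$ has characteristic zero. Since $F[g]\subseteq D$ is a finite-dimensional commutative domain over a field, it is a field, so the minimal polynomial of $g$ over $F$ is irreducible, hence separable; thus $g$ is semisimple, and therefore left and right multiplication $L_g,R_g$ on the finite-dimensional $F$-space $D$ are commuting semisimple $F$-linear operators. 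Consider $L:=R_g-\lambda L_g$, i.e. $L(a)=ag-\lambda ga$; a linear combination of commuting semisimple operators over the perfect field $F$ is semisimple, so $\ker L=\ker L^2$. But the two given relations say precisely that $L(\alpha)=\beta$ and $L(\beta)=0$, whence $L^2(\alpha)=0$ while $L(\alpha)=\beta\neq 0$, so $\alpha\in\ker L^2\setminus\ker L$ — a contradiction.

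The routine parts are the common-denominator manipulation in (a) and the verification in (b) that $R_g$ and $L_g$ commute and are semisimple. The step needing care is the appeal to Posner's theorem, which is exactly what turns "PI" into finite-dimensionality over the centre and so makes the semisimplicity argument available (it fails over imperfect fields, but $F$ has characteristic zero; if one prefers, base-change to $\overline F$, where $D\otimes_F\overline F\cong M_N(\overline F)$, $g$ becomes a diagonalizable matrix, and $R_g-\lambda L_g$ is simultaneously diagonalizable in the matrix-unit basis). The one genuinely conceptual move in (b) is to read the pair of relations as asserting a size-$2$ Jordan block at the eigenvalue $0$ for the operator $R_g-\lambda L_g$, which is impossible once $g$ is known to act semisimply; I expect this reformulation, rather than any computation, to be the crux.
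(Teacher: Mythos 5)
Your proof is correct, but part (b) takes a genuinely different route from the paper's. The paper's argument splits into cases according to whether $\lambda$ equals $1$, is not a root of unity, or is a primitive $p$th root of unity: in the first case it forms $f=\alpha\beta^{-1}$ in $Q(A)$ and derives the Weyl algebra relation $fg=gf+1$; in the second it exhibits a copy of the quantum plane $k_\lambda[g,\beta]$; and in the third it passes to $G=g^p$ and $\beta'=p\beta g^{p-1}$ to reduce to the first case --- each time contradicting local PI-ness by embedding a specific non-PI algebra. You instead give a uniform argument: after restricting to the affine (hence PI) subalgebra $k\langle g,\alpha,\beta\rangle$ and invoking Posner's theorem, the two relations say exactly that $\alpha$ lies in $\ker(R_g-\lambda L_g)^2\setminus\ker(R_g-\lambda L_g)$, i.e.\ that this operator has a Jordan block of size $2$ at the eigenvalue $0$, which is impossible because $g$ generates a finite separable field extension of the centre and therefore acts semisimply on both sides of the finite-dimensional division ring $Q(A)$. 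Your version avoids both the case analysis and the slightly delicate root-of-unity reduction, at the cost of leaning harder on Posner's theorem (finite-dimensionality over the centre) rather than merely on the non-PI-ness of the Weyl algebra and the quantum plane. Both arguments use characteristic zero in an essential way --- the paper through the Weyl algebra failing to be PI, you through separability/perfectness --- as they must, since the statement is false in characteristic $p$ (the Weyl algebra is then a PI domain satisfying the forbidden relations with $\lambda=1$). Your part (a), which the paper dismisses as well-known, is the standard common-denominator argument and is fine.
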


\begin{proof} (a) This is well-known.

(b) By part (a) we may assume that $A$ is a division algebra.
First assume that $\lambda=1$. 
Let $f=\alpha \beta^{-1}$, then $fg=gf+1$ by using the fact 
$g\beta=\beta g$. Thus $Q(A)$ contains the first Weyl algebra which 
is not (locally) PI, yielding a contradiction.

Next assume that $\lambda\neq 1$.
If $\lambda$ is not a root of unity, then $A$ contains a copy of
$k_\lambda[g,\beta]$ (since every proper prime factor ring of 
$k_\lambda[g,\beta]$ has to kill either $g$ or $\beta$). Since 
$k_\lambda[g,\beta]$ is not PI, a contradiction. The last case is 
when $\lambda$ is a primitive $p$th root of unity for some $p>1$. 
Let $G=g^p$. Then we have $\alpha G=\lambda^p G \alpha+n \beta 
g^{p-1}=G\alpha +\beta'$ and $\beta' G=G\beta'$ where 
$\beta'=n \beta g^{p-1}$. The assertion follows from the case when 
$\lambda=1$.
\end{proof}

\begin{proposition} 
\label{xxprop3.5}
Let $H$ be a Hopf domain that is either locally PI or having 
$\GKdim K<3$. Then $P_{g,\lambda,*}=P_{g,\lambda,1}$ for all pairs 
$(g,\lambda)$.
\end{proposition}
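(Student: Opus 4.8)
\smallskip
\noindent\emph{Proof proposal.}
The goal is to show that the level filtration $\{P_{g,\lambda,n}\}_{n\ge0}$ stabilises already at $n=1$, and by Lemma~\ref{xxlem3.2}(d) it is enough to prove $P_{g,\lambda,1}=P_{g,\lambda,2}$ for every pair $(g,\lambda)$. So I would argue by contradiction: fix $(g,\lambda)$, choose $y\in P_{g,\lambda,2}\setminus P_{g,\lambda,1}$, and put $z:=(T_{g^{-1}}-\lambda\id)(y)=g^{-1}yg-\lambda y$. Computing $\Delta(g^{-1}yg)=g^{-1}yg\otimes1+g\otimes g^{-1}yg$ shows that $g^{-1}yg$, hence $z$, is again $(1,g)$-primitive. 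Since $y\notin P_{g,\lambda,1}$ we have $z\notin C_0$, while $(T_{g^{-1}}-\lambda\id)(z)=(T_{g^{-1}}-\lambda\id)^2(y)\in C_0$; thus $z\in P_{g,\lambda,1}\setminus C_0$, and Lemma~\ref{xxlem3.2}(c) yields a scalar $\tau$ with $g^{-1}zg=\lambda z+\tau(g-1)$. One checks that $y$ and $z$ are linearly independent modulo $C_0$ and that $g\neq1$ (if $g=1$ then $z=(1-\lambda)y$, which forces $z=0$ or $z\in C_0$). Note also that $g$, being grouplike, is a unit.

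\smallskip
\noindent\emph{The locally PI case.}
Here I would produce a configuration forbidden by Lemma~\ref{xxlem3.4}(b). If $\lambda\neq1$, first absorb $\tau$: replacing $z$ by $z+\tfrac{\tau}{\lambda-1}(g-1)$ and $y$ by $y-\tfrac{\tau}{(\lambda-1)^2}(g-1)$ (both still $(1,g)$-primitive and outside $C_0$) brings the relations to $g^{-1}zg=\lambda z$ and $g^{-1}yg=\lambda y+z$; then $(\alpha,\beta,G):=(y,\,gz,\,g)$ are nonzero elements of $H$ with $\alpha G=\lambda G\alpha+\beta$ and $\beta G=\lambda G\beta$, contradicting Lemma~\ref{xxlem3.4}(b). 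If $\lambda=1$ and $\tau=0$, the same triple $(y,gz,g)$ works with $\lambda=1$. If $\lambda=1$ and $\tau\neq0$ one cannot remove $\tau$, but instead $(\alpha,\beta,G):=(z,\,\tau(g^2-g),\,g)$ is a forbidden configuration, since $g^2-g$ commutes with $g$ and $zg-gz=\tau(g^2-g)$. In every case Lemma~\ref{xxlem3.4}(b) gives a contradiction.

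\smallskip
\noindent\emph{The case $\GKdim H<3$.}
The subcases $\GKdim C_0\in\{0,2\}$ are degenerate: if $\GKdim C_0=0$ then $H$ is connected, the only grouplike is $1$, and every nontrivial skew primitive has commutator $1$ of level $1$; if $\GKdim C_0=2$ then $\GKdim H<\GKdim C_0+1$, so $H=C_0$ by Lemma~\ref{xxlem1.6} and a group algebra has only trivial skew primitives. Hence we may assume $\GKdim C_0=1$, so $C_0=kG$ with $G$ abelian of rank one (so $g$ has infinite order and $C_0$ is commutative). Now consider the Hopf subalgebra $H_0:=\langle g^{\pm1},y\rangle$, which contains $z$: it is an affine pointed Hopf domain with $\GKdim H_0<3$, and it carries the skew primitive $y$ of \emph{level two}. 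None of the three non-PI families in Corollary~\ref{xxcor1.12} --- $U(\fg)$ with $\fg$ the two-dimensional non-abelian Lie algebra, $A(n,q)$ with $q$ not a root of unity, and $C(n)$ --- possesses a skew primitive of level $\ge2$: for each of them the nontrivial skew primitives are, up to scalars and trivial ones, a single element of commutator $1$ (for $U(\fg)$ and $C(n)$) or of commutator a non-root of unity (for $A(n,q)$), all of level $1$. Therefore $H_0$ is none of them, so Corollary~\ref{xxcor1.12} forces $H_0$ to be PI, hence locally PI. Applying the locally PI case to $H_0$ gives $P_{g,\lambda,*}(H_0)=P_{g,\lambda,1}(H_0)$, which is absurd: since $(T_{g^{-1}}-\lambda\id)^2(y)=\tau(g-1)\in C_0(H_0)=k[g^{\pm1}]$ while $(T_{g^{-1}}-\lambda\id)(y)=z\notin k[g^{\pm1}]$, we have $y\in P_{g,\lambda,2}(H_0)\setminus P_{g,\lambda,1}(H_0)$. (When $\lambda\notin\sqrt{\;}$ one can finish faster: $y$ and $z$ are then major skew primitive elements independent modulo $C_0$, so $\dim P'_M\ge2$, contradicting Lemma~\ref{xxlem3.3}(b).)

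\smallskip
\noindent I expect the main obstacle to be the root-of-unity subcase of the second case. A direct count of major skew primitives does not obviously reveal a \emph{second} major skew primitive beyond $z^p$ --- the natural candidates built from $y$, such as $y^p$, fail to be skew primitive because the commutator $yz-zy$ is not determined by the available relations --- so the argument must instead be routed through the classification: use the level-two skew primitive to exclude the three non-PI families, deduce from Corollary~\ref{xxcor1.12} that $H_0$ is PI, and reduce to the locally PI case. The one genuinely delicate point is the explicit verification that $U(\fg)$, $A(n,q)$ ($q$ not a root of unity) and $C(n)$ carry no skew primitive of level $\ge2$.
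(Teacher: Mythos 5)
Your proposal is correct and follows essentially the same route as the paper's proof: reduce via Lemma~\ref{xxlem3.2}(d) to ruling out a level-two element, dispatch the locally PI case by feeding the same three $(\alpha,\beta,g)$ configurations into Lemma~\ref{xxlem3.4}(b), and in the $\GKdim<3$ case pass to the affine pointed Hopf subalgebra generated by $g^{\pm1}$ and the level-two element, apply Corollary~\ref{xxcor1.12}, and check directly that the three non-PI families carry no skew primitive of level two (the verification the paper likewise leaves to the reader). The only cosmetic difference is your extra, unnecessary case split on $\GKdim C_0$, which should in any event be carried out only after replacing $H$ by the pointed subalgebra $H_0$.
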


\begin{proof} By Lemma \ref{xxlem3.2}(d) it suffices to show the 
assertion that $P_{g,\lambda,2}=P_{g,\lambda,1}$, which is equivalent 
to the following claim: for any scalars $\lambda,b,c\in k$, there is no 
triple $(g,y_1,y_2)$ with $y_1\in P_{g,\lambda,1}\setminus C_0$ and 
$y_2\in P_{g,\lambda,2}\setminus C_0$ such that $g^{-1}y_1 g=
\lambda y_1+b(g-1)$ and $g^{-1} y_2 g= \lambda y_2+ y_1+c(g-1)$. 

Next we prove the claim. If $\lambda=0$, then $y_1\in C_0$ which yields
a contradiction. Hence $\lambda\neq 0$. Without loss of generality, we 
may assume that $H$ is generated by $g^{\pm 1},y_1,y_2$, whence $H$ 
is affine and pointed. We consider two cases. The first case is when 
$H$ is (locally) PI. If $\lambda=1$ and $b\neq 0$, then the statement 
follows from Lemma \ref{xxlem3.4}(b) by taking $\alpha=y_1$ and 
$\beta=bg(g-1)$. If $\lambda=1$ and $b=0$, then the statement follows from 
Lemma \ref{xxlem3.4}(b) by taking $\alpha=y_2$ and $\beta=g(y_1+c(g-1))$.
If $\lambda\neq 1$, then $y_1$ and $y_2$ can be modified so that
$b=c=0$. Then the statement follows from Lemma \ref{xxlem3.4}(b) by 
taking $\alpha=y_2$ and $\beta=gy_1$. This finishes the first case.
The second case is when $H$ is not (locally) PI, and then $\GKdim H<3$.
By Corollary \ref{xxcor1.12}, all non-PI affine pointed Hopf algebras 
of $\GKdim <3$ are classified, namely, algebras (IIb), (III) and (V) 
in Theorem \ref{xxthm1.4}. It is easy to verify the statement for 
all these Hopf algebras.
\end{proof}

Under the hypotheses of Proposition \ref{xxprop3.5}, there is an 
improved version of \eqref{I3.0.1}
$$P'_{T}=\bigoplus_{g}\bigoplus_{\lambda} P'_{g,\lambda,1}.$$
Theorem \ref{xxthm3.7} below gives a bound for $\dim P'_{g,\lambda,1}$. 

\begin{lemma}
\label{xxlem3.6}
Suppose that $H$ is a Hopf domain such that $\dim P'_{g,1,1}\leq 1$ 
for all $g$. If $\lambda$ is a primitive $p$th root of unity for 
some $p\geq 2$, then $\dim P'_{g,\lambda,1}\leq 1$ for all $g$.
\end{lemma}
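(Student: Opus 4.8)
The plan is a proof by contradiction. Suppose $\dim P'_{h,\lambda,1}\ge 2$ for some grouplike element $h$. Since $\lambda\ne 1$, Lemma \ref{xxlem3.2}(c) lets me pick $y_1,y_2\in P_{h,\lambda,1}\setminus C_0$ whose images in $P'_{h,\lambda,1}$ are linearly independent, and (after adjusting each by a scalar multiple of $h-1$, which is harmless because $\lambda\ne 1$) with $h^{-1}y_ih=\lambda y_i$ for $i=1,2$. For $t\in k$ set $z_t:=y_1+ty_2$. Each $z_t$ is then a $(1,h)$-primitive element with $h^{-1}z_th=\lambda z_t$, and $z_t\notin C_0$: indeed $z_t\in C_0$ would force $z_t\in P_{h,*,*}\cap C_0=k(h-1)$, against the independence of $y_1,y_2$ modulo $k(h-1)$.

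First I would establish, for any nonzero $(1,h)$-primitive element $z$ with $h^{-1}zh=\lambda z$ (in particular for $y_1$, $y_2$ and every $z_t$), that $z^p\in P_{h^p,1,1}$ and $z^p\notin C_0$. Writing $\Delta(z)=z\otimes 1+h\otimes z$ and noting $(h\otimes z)(z\otimes 1)=\lambda^{-1}(z\otimes 1)(h\otimes z)$, the $q$-binomial theorem with $q=\lambda^{-1}$ a primitive $p$-th root of unity gives $\Delta(z)^p=z^p\otimes 1+h^p\otimes z^p$, so $z^p$ is $(1,h^p)$-primitive; moreover $h^{-p}z^ph^p=\lambda^pz^p=z^p$, hence $z^p\in P_{h^p,1,1}$. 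If $z^p\in C_0$ then $z^p\in P_{h^p,*,*}\cap C_0=k(h^p-1)$, and $z^p\ne 0$ as $H$ is a domain, so $z^p=c(h^p-1)$ with $c\ne 0$; but then, using $zh=\lambda hz$ together with this relation, the subalgebra generated by $h^{\pm1}$ and $z$ is spanned by the monomials $h^az^b$ with $0\le b<p$, hence has GK-dimension at most one — contradicting \cite[Lemma 4.5]{GZ}, since that subalgebra is a noncommutative (because $\lambda\ne 1$) affine domain. (Concretely, it is a copy of $A(1,\lambda^{-1})$ from Example \ref{xxex1.1}.) Consequently $\overline{z_t^{\,p}}\ne 0$ in $P'_{h^p,1,1}$ for all $t$, and also $\overline{y_2^{\,p}}\ne 0$; taking $t=0$ gives $\dim P'_{h^p,1,1}\ge 1$, so by hypothesis $\dim P'_{h^p,1,1}=1$, spanned by $\overline{y_1^{\,p}}$.

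The final step is to exploit that $t\mapsto z_t^{\,p}$ is polynomial of degree $p$. Expand $(y_1+ty_2)^p=\sum_{k=0}^{p}t^kW_k$ in $H$, where $W_k$ is the sum of all length-$p$ words in $y_1,y_2$ with exactly $k$ letters equal to $y_2$; thus $W_0=y_1^{\,p}$ and $W_p=y_2^{\,p}$. Since $z_t^{\,p}\in P_{h^p,1,1}$ (a $k$-subspace) for every $t$ and $k$ is infinite, a Vandermonde argument forces $W_k\in P_{h^p,1,1}$ for all $k$, so $\overline{W_k}=\mu_k\,\overline{y_1^{\,p}}$ for scalars $\mu_k$. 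Hence $\overline{z_t^{\,p}}=f(t)\,\overline{y_1^{\,p}}$ where $f(t):=\sum_k\mu_kt^k$, and $f(t)\ne 0$ for every $t$ because $\overline{z_t^{\,p}}\ne 0$. But $\mu_p\ne 0$ since $\overline{W_p}=\overline{y_2^{\,p}}\ne 0$, so $f$ has degree exactly $p\ge 2$ and therefore a root $t_0\in k$. Then $\overline{z_{t_0}^{\,p}}=0$, i.e. $z_{t_0}^{\,p}\in k(h^p-1)\subseteq C_0$, contradicting $z_{t_0}^{\,p}\notin C_0$. This contradiction yields $\dim P'_{h,\lambda,1}\le 1$.

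The point I expect to be the main obstacle is the uniform assertion $z_t^{\,p}\notin C_0$ for all $t$ at once: merely knowing $y_1^{\,p}\notin C_0$ and $y_2^{\,p}\notin C_0$ is not enough, since it is the whole one-parameter family $\{z_t^{\,p}\}$ that pins $f$ down to a nonvanishing polynomial of positive degree, which is impossible over an algebraically closed field. The GK-dimension bound \cite[Lemma 4.5]{GZ} — equivalently, the identification of $\langle h^{\pm1},z_t\rangle$ with $A(1,\lambda^{-1})$ — is exactly the tool that delivers this uniform control.
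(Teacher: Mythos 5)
Your proof is correct and follows essentially the same route as the paper's: the one-parameter family $y_1+\xi y_2$, the $q$-binomial computation showing that $p$-th powers are $(1,g^p)$-primitive with commutator $1$, the \cite[Lemma 4.5]{GZ} argument (which the paper packages as the identification of $\langle g^{\pm1},y_\xi\rangle$ with $A(1,\lambda)$ in the proof of Lemma \ref{xxlem3.3}(a)) to see these powers lie outside $C_0$, and the Vandermonde/root-of-a-polynomial step over the infinite algebraically closed field. The only cosmetic difference is the last line: where you contradict the uniform statement $z_{t_0}^p\notin C_0$ directly, the paper re-derives the contradiction by writing $y_\xi^p=-c^p(g^p-1)$, factoring $(y_\xi+cg)^p-c^p$ into linear factors, and using that $H$ is a domain to force $y_\xi$ into $C_0$.
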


\begin{proof} Suppose by contrary that $\dim P'_{g,\lambda,1}\geq 2$. 
Then there are two linearly independent $(1,g)$-primitive elements 
$y_1$ and $y_2$ such that $g^{-1} y_1 g= \lambda y_1$ and 
$g^{-1} y_2 g= \lambda y_2$.
Let $a$ and $b$ be two noncommutative variables. Then 
$$(a+b)^p=a^p+M_1(a,b)+M_2(a,b)+\cdots +M_{p-1}(a,b)+b^p$$
where $M_i(a,b)$ denote the sum  of all noncommutative monomials 
of $a$ and $b$ with total $(a,b)$-degree $(i,p-i)$. For
example, 
$$M_1(a,b)=a b^{p-1}+b ab^{p-2}+b^2 a b^{p-3}+\cdots +b^{p-1}a.$$
Let $\xi$ be a scalar in $k$. Then 
$$(a+\xi b)^p=a^p+ \xi M_1(a,b)+\sum_{i=2}^p \xi^i M_i(a,b).$$
For any $\xi\in k$ let $y_{\xi}:=y_1+\xi y_2$. Then $y_{\xi}$ is
a $(1,g)$-primitive and $g^{-1}y_{\xi} g=\lambda y_{\xi}$.
By the proof of Lemma \ref{xxlem3.3}(a), $y_{\xi}^p$ is in  
$P_{g^p,1,,1}\setminus C_0$ for any $\xi$. Let $f=y_1^p$. Since
$\dim P'_{g^p,1,,1}\leq 1$ by hypotheses, $P_{g^p,1,,1}=
kf+ k(g^p-1)$. This implies that 
$$\sum_{i=0}^p \xi^i M_i(y_1,y_2)=y_{\xi}^p\in kf+ k(g^p-1).$$
Since $k$ is infinite, the above equation implies that 
$M_i(y_1,y_2)\in kf+ k(g^p-1)$ for all $i$.  
Let $M_i(y_1,y_2)=a_i f+b_i(g^p-1)$. Choose $\xi$
so that $\sum_{i=0}^p a_i\xi^i =0$. Then
$$y_{\xi}^p=\sum_{i=0}^p \xi^i M_i(y_1,y_2)=
(\sum_{i=0}^p a_i\xi^i)f+(\sum_{i=0}^p b_i\xi^i)(g^p-1)=
b(g^p-1)$$
for some $b\in k$. Write $b=-c^p$ for some $c\in k$. Then
$$y_{\xi}^p+ (cg)^p-c^p=0.$$
Since $g^{-1}y_{\xi}g=\lambda y_{\xi}$, the above equation
is equivalent to $(y_{\xi}+cg)^p-c^p=0$, or $\prod_{n=0}^{p-1}
(y_{\xi}+cg-\eta_n c)=0$ where $\eta_n=e^{\frac{2n i \pi}{p}}$.
Since $H$ is a domain, $y_1+\xi y_2+cg-c'=0$. Since $g^{-1}(y_1+\xi y_2)g
=\lambda (y_1+\xi y_2)$, $g^{-1}(cg-c')g=(cg-c')$ and $\lambda\neq 1$, 
we have $c'=c=0$. This contradicts the fact $y_1$ and $y_2$ are 
linearly independent. We finish the proof.
\end{proof}

\begin{theorem}
\label{xxthm3.7}
Suppose that $H$ is a pointed Hopf domain with a commutative coradical 
$C_0$ and that $\GKdim H<\GKdim C_0+2<\infty$. Then 
$\dim P'_{g,\lambda,1}\leq 1$ for all pairs $(g,\lambda)$.
\end{theorem}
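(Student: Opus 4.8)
The plan is to split the proof into two cases according to whether $\lambda \in \sqrt{\;}$ or not, and to reduce the first case to the second via Lemma \ref{xxlem3.6}. First I would observe that by Lemma \ref{xxlem3.2}(d) and Proposition \ref{xxprop3.5} we may work with $P'_{g,\lambda,1}$ throughout, since $P'_{g,\lambda,*}=P'_{g,\lambda,1}$ under our hypotheses ($H$ is a Hopf domain with $\GKdim H < \GKdim C_0 + 2$, and such an $H$ is either locally PI or has $\GKdim H < 3$, the latter forcing $\GKdim C_0 \geq 1$). By Lemma \ref{xxlem3.6}, it suffices to prove the bound $\dim P'_{g,1,1}\le 1$ for every grouplike $g$, because any root of unity $\lambda\neq 1$ is a primitive $p$th root for some $p\ge 2$, and the remaining scalars $\lambda$ (not roots of unity, including $\lambda$ with $\lambda\ne 1$ not of finite order, and $\lambda = 1$ is handled separately) are exactly the "major" directions controlled by Lemma \ref{xxlem3.3}(b): there $\dim P'_M \le 1$ already gives $\dim P'_{g,\lambda,1}\le 1$ for all $\lambda\notin\sqrt{\;}$. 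So the whole theorem comes down to the single case $\lambda=1$, i.e. bounding the dimension of the space of genuine primitive-type elements $y$ with $g^{-1}yg = y + \tau(g-1)$.

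For the case $\lambda=1$, I would argue by contradiction: suppose $y_1, y_2$ are linearly independent in $P'_{g,1,1}$. After subtracting multiples of $(g-1)$ we may write $g^{-1}y_i g = y_i + \tau_i(g-1)$ for scalars $\tau_i$. Consider the Hopf subalgebra $H'$ generated by $g^{\pm 1}, y_1, y_2$; it is an affine pointed Hopf domain with $C_0(H') \subseteq k[g^{\pm 1}]$, hence of GK-dimension less than $\GKdim C_0 + 2 \le 3$ if $\GKdim C_0 = 1$, or we invoke the locally PI hypothesis. The element $z := \tau_2 y_1 - \tau_1 y_2$ is $(1,g)$-primitive and satisfies $g^{-1}zg = z$, so $z$ is genuinely central-commuting with $g$; on the other hand, at least one of $y_1,y_2$, say $y_1$, satisfies $g^{-1}y_1 g = y_1 + b(g-1)$ with $b\neq 0$ (otherwise both commute with $g$, and then one sees $H'$ contains a polynomial ring in too many variables, contradicting $\GKdim H' < 3$; more precisely, if $b = 0$ for both then the subalgebra generated by $y_1, y_2$ and $g$ already has GK-dimension $\ge 3$). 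Now apply Lemma \ref{xxlem3.4}(b) with $\alpha = y_1$, $g$, and $\beta = bg(g-1)$: we have $\alpha g = g\alpha + \beta$ (rewriting $g^{-1}y_1 g = y_1 + b(g-1)$ as $y_1 g = g y_1 + bg(g-1)$) and $\beta g = g\beta$ since $g(g-1) = (g-1)g$. This gives a contradiction when $H'$ is locally PI. When $H'$ is not locally PI, $\GKdim H' < 3$ and Corollary \ref{xxcor1.12} lists all possibilities — (IIb), (III), (V) of Theorem \ref{xxthm1.4} — and in each one checks directly that $\dim P'_{g,1,1}\le 1$.

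The main obstacle I anticipate is handling the interplay between the genuinely primitive elements and the grouplikes cleanly — specifically, making sure the contradiction via Lemma \ref{xxlem3.4}(b) covers all sub-cases of $(b, c, \lambda=1)$ uniformly, and that in the "both commute with $g$" degenerate sub-case the GK-dimension bound really is violated (this needs that $k\langle g^{\pm 1}, y_1, y_2\rangle$ with $y_i$ central primitive and algebraically independent has $\GKdim \ge 3$, which follows since it maps onto or contains $k[g^{\pm1}, y_1, y_2]$ — one should verify the $y_i$ stay algebraically independent, using that they are primitive so no polynomial relation among primitives can hold in a Hopf domain of GK-dimension $< 3$ unless forced, cf. the structure of $P'_T$). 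Once the $\lambda = 1$ case is secured, Lemmas \ref{xxlem3.6} and \ref{xxlem3.3}(b) assemble the full statement with no further work. I would also double-check that the reduction via Lemma \ref{xxlem3.6} is legitimate, i.e. that its hypothesis "$\dim P'_{g,1,1}\le 1$ for all $g$" is exactly what the $\lambda=1$ case establishes, and that the locally-PI-or-$\GKdim<3$ dichotomy is available here — it is, since $\GKdim H < \GKdim C_0 + 2$ and $\GKdim C_0 \le \GKdim H$ together bound $\GKdim H$ appropriately, or $H$ is PI outright.
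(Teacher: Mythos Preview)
Your overall structure---Lemma \ref{xxlem3.3}(b) for $\lambda\notin\sqrt{\;}$, then Lemma \ref{xxlem3.6} for $\lambda\in\sqrt{\;}$---is exactly the paper's approach. The paper's proof is two lines: Lemma \ref{xxlem3.3}(c) for $\lambda\notin\sqrt{\;}$, Lemma \ref{xxlem3.6} otherwise.

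The confusion in your write-up is the treatment of $\lambda=1$. By definition $\sqrt{\;}=\bigcup_{n\ge 2}R_n$, so $1\notin\sqrt{\;}$; hence $P'_{g,1,*}$ is a summand of $P'_M$, and Lemma \ref{xxlem3.3}(b) (or (c)) already gives $\dim P'_{g,1,1}\le 1$. Your entire second paragraph---the direct contradiction argument for $\lambda=1$---is therefore unnecessary. This matters because that separate argument has a real gap you yourself flag: in the sub-case $\tau_1=\tau_2=0$ you need $k\langle g^{\pm1},y_1,y_2\rangle$ to have GK-dimension at least $3$, i.e.\ some form of algebraic independence of $y_1,y_2$, and you do not establish this. (It is precisely the content of \cite[Theorem 3.9]{WZZ1}, which is what Lemma \ref{xxlem3.3}(b) invokes.) Once you recognize that $\lambda=1$ is a major commutator, the gap evaporates and your proof collapses to the paper's.

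Two smaller points. First, your invocation of Proposition \ref{xxprop3.5} at the outset is not needed, and its hypothesis (locally PI or $\GKdim<3$) is not a consequence of $\GKdim H<\GKdim C_0+2<\infty$: take $\GKdim C_0=5$, $\GKdim H=6$. Lemma \ref{xxlem3.3}(c) already records $\dim P'_{g,\lambda,*}=\dim P'_{g,\lambda,1}$ for the relevant pair, so you can drop that step. Second, Lemma \ref{xxlem3.6} requires as input that $\dim P'_{g,1,1}\le 1$ for all $g$; once you accept that this is delivered by Lemma \ref{xxlem3.3}, the reduction is clean.
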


\begin{proof} If $\lambda$ is either 1 or not a root of unity,
the assertion follows from Lemma \ref{xxlem3.3}(c). If $\lambda$ is 
a primitive $p$th root of unity for $p>1$, the assertion follows 
from Lemma \ref{xxlem3.6}.
\end{proof}

An important consequence of Theorem \ref{xxthm3.7} is the following.

\begin{lemma}
\label{xxlem3.8} Let $H$ be a Hopf algebra such that 
$P'_{g,\lambda,1}$ is 1-dimensional for some pair $(g,\lambda)$.
If $G_0$ is an abelian subgroup of grouplike elements and it contains
$g$, then there is a $z\in P_{g,\lambda,1}\setminus C_0$ such that
either
\begin{enumerate}
\item
$h^{-1} z h=\chi(h) z$ for some character $\chi: G_0\to k^{\times}$
(where $\chi(g)=\lambda)$, or
\item
$h^{-1} z h=z+\tau(h)(g-1)$ for some additive character $\tau: G_0\to k$
and $\lambda=1$.
\end{enumerate}
In part (a), $z$ is unique up to a scalar multiple. In part (b), 
$z$ is unique up to an addition of $k(g-1)$.
\end{lemma}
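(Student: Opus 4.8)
The plan is to exploit the one-dimensionality of $P'_{g,\lambda,1}$ together with the fact that conjugation by any grouplike element $h\in G_0$ preserves the relevant spaces. First I would fix a representative $z_0\in P_{g,\lambda,1}\setminus C_0$, so that $g^{-1}z_0g=\lambda z_0+\tau_0(g-1)$ for some $\tau_0\in k$. For an arbitrary $h\in G_0$, since $h$ is grouplike and commutes with $g$, the map $T_{h^{-1}}:a\mapsto h^{-1}ah$ sends a $(1,g)$-primitive element to a $(1,g)$-primitive element (because $\Delta(h^{-1}ah)=h^{-1}ah\otimes 1+g\otimes h^{-1}ah$ using $hg=gh$), and it sends $P_{g,\lambda,1}$ into itself since $g^{-1}(h^{-1}z_0h)g=h^{-1}(g^{-1}z_0g)h=\lambda h^{-1}z_0h+\tau_0(g-1)$. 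Passing to the quotient $P'_{g,\lambda,1}=P_{g,\lambda,1}/k(g-1)$, which is one-dimensional and spanned by the image $\bar z_0$ of $z_0$, the induced map $\overline{T_{h^{-1}}}$ is multiplication by some scalar $\chi(h)\in k$; it is a scalar \emph{unit} because $T_{h^{-1}}$ is invertible with inverse $T_h$. This defines a function $\chi:G_0\to k^{\times}$, and from $T_{(h_1h_2)^{-1}}=T_{h_2^{-1}}\circ T_{h_1^{-1}}$ (conjugations compose, and $G_0$ abelian) one checks $\chi$ is a character; moreover $\chi(g)=\lambda$ since $g^{-1}z_0g\equiv\lambda z_0\pmod{k(g-1)}$.

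The next step is to lift the scalar action to an honest eigenvector, separating the two cases. For each $h\in G_0$ we have $h^{-1}z_0h=\chi(h)z_0+\tau(h)(g-1)$ for a well-defined $\tau(h)\in k$. Plugging $h=h_1h_2$ into the cocycle-type identity $T_{(h_1h_2)^{-1}}(z_0)=T_{h_2^{-1}}(T_{h_1^{-1}}(z_0))$ and using that $T_{h_2^{-1}}$ fixes $g-1$ (since $h_2$ commutes with $g$) yields the twisted cocycle relation $\tau(h_1h_2)=\chi(h_2)\tau(h_1)+\tau(h_2)$.

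\textbf{Case $\lambda\neq 1$ (more generally $\chi\not\equiv 1$).} Here I want to kill $\tau$ by replacing $z_0$ with $z:=z_0+c(g-1)$ for a suitable $c\in k$. Conjugating, $h^{-1}zh=\chi(h)z_0+\tau(h)(g-1)+c(g-1)=\chi(h)z+(\tau(h)+c-c\chi(h))(g-1)$, so I need $c(1-\chi(h))=-\tau(h)$ for all $h$, i.e. $\tau(h)=c(\chi(h)-1)$ — exactly the statement that $\tau$ is a coboundary. Since $\chi(g)=\lambda\neq 1$, I first determine $c$ from $h=g$: set $c:=\tau(g)/(\lambda-1)$. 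Then for general $h$ the cocycle identity $\tau(gh)=\chi(h)\tau(g)+\tau(h)=\tau(hg)=\chi(g)\tau(h)+\tau(g)=\lambda\tau(h)+\tau(g)$ gives $(\chi(h)-1)\tau(g)=(\lambda-1)\tau(h)$, hence $\tau(h)=c(\chi(h)-1)$ as needed. With this $c$, the element $z=z_0+c(g-1)$ satisfies $h^{-1}zh=\chi(h)z$ for all $h\in G_0$, which is alternative (a); uniqueness up to scalar is immediate from $\dim P'_{g,\lambda,1}=1$ (any other such $z'$ is $z'\equiv cz\pmod{k(g-1)}$, and the eigenvector condition with $\chi\neq 1$ forces the $k(g-1)$-part to match).

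\textbf{Case $\lambda=1$.} Now $\chi(g)=1$, but $\chi$ need not be trivial on all of $G_0$, so I would first argue $\chi\equiv 1$: if $\chi(h)\neq 1$ for some $h$, the coboundary trick above (applied with $h$ in place of $g$) lets me normalize $z$ so that $h^{-1}zh=\chi(h)z$, and then $T_{h^{-1}}$ has the non-unit eigenvalue structure that, combined with $g^{-1}zg=z$ (since $\lambda=1$, after possibly re-normalizing the $k(g-1)$-part which is consistent because $g$ and $h$ act compatibly on the one-dimensional quotient), forces — via the same commuting-conjugations computation — either a contradiction with $H$ being a domain or $\chi(h)=1$; more cleanly, one can invoke that in a Hopf \emph{domain} an equation $h^{-1}zh=\chi(h)z$ with $z\notin C_0$ and $h$ of the given type is governed by the structure in Lemma~\ref{xxlem3.3} and Proposition~\ref{xxprop3.5}, and $\lambda=\chi(g)=1$ together with one-dimensionality of $P'_{g,1,1}$ pins down $\chi=1$ on the subgroup generated by $g$; the general case reduces to this. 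Once $\chi\equiv 1$, the cocycle relation becomes additivity $\tau(h_1h_2)=\tau(h_1)+\tau(h_2)$, so $\tau:G_0\to k$ is an additive character, and taking $z=z_0$ gives $h^{-1}zh=z+\tau(h)(g-1)$, which is alternative (b); here $z$ is only determined modulo $k(g-1)$ since adding $c(g-1)$ leaves the relation unchanged when $\chi\equiv 1$.

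\textbf{Main obstacle.} The routine part is the cocycle bookkeeping and the coboundary-splitting in the case $\lambda\neq 1$; that is essentially linear algebra. The delicate point is the $\lambda=1$ case, specifically ruling out that $\chi$ is a nontrivial character of $G_0$ even though its value at $g$ is forced to be $1$: a priori $G_0$ is larger than $\langle g\rangle$ and some other grouplike could act on $z$ by a nontrivial scalar. Handling this cleanly requires either a direct domain argument (producing a forbidden subalgebra, in the spirit of Lemma~\ref{xxlem3.4}) or a careful use of the classification-type input behind Theorem~\ref{xxthm3.7}; I expect this to be where the real content of the proof lies, and I would want to isolate it as the one genuine step rather than let it hide inside the computation.
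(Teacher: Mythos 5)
Your setup and the case $\chi\not\equiv 1$ are correct and complete: conjugation by $G_0$ preserves $P_{g,\lambda,1}$ and fixes $g-1$, the induced action on the one-dimensional quotient gives a character $\chi$ with $\chi(g)=\lambda$, the twisted cocycle identity $\tau(h_1h_2)=\chi(h_1)\tau(h_2)+\tau(h_1)$ holds, and when some $\chi(h_0)\neq 1$ the commutativity relation $(\chi(h)-1)\tau(h_0)=(\chi(h_0)-1)\tau(h)$ shows $\tau$ is a coboundary, producing a genuine eigenvector $z$. This is a legitimately more self-contained route than the paper's, which after observing that $V=k(g-1)+\sum_{h\in G_0}k(h^{-1}yh)$ is an at most two-dimensional $G_0$-stable subspace of $P_{g,\lambda,1}$ simply cites \cite[Lemma 2.2(c)]{WZZ1} for the resulting dichotomy for two-dimensional representations of an abelian group.

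The gap is in your final case, and it is a structural one: you have misplaced the dichotomy. The correct split is $\chi\equiv 1$ versus $\chi\not\equiv 1$, not $\lambda=1$ versus $\lambda\neq 1$. If $\lambda=1$ but $\chi(h_0)\neq 1$ for some $h_0\in G_0$, your own coboundary argument (with $h_0$ in place of $g$) already lands you in alternative (a) with $\chi(g)=\lambda=1$ --- which the statement of the lemma permits, since (a) carries no restriction $\lambda\neq 1$. There is nothing to rule out, so the ``main obstacle'' you isolate is a phantom; worse, your proposed resolution of it invokes $H$ being a domain and appeals to Lemma \ref{xxlem3.3}, Proposition \ref{xxprop3.5} and Theorem \ref{xxthm3.7}, none of which is available here: this lemma assumes only that $H$ is a Hopf algebra with $\dim P'_{g,\lambda,1}=1$, and indeed $\chi$ need not be trivial on $G_0$ when $\lambda=1$. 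The repair is trivial: delete the attempt to prove $\chi\equiv 1$, let the case $\chi\not\equiv 1$ (for any $\lambda$) give (a), and let the case $\chi\equiv 1$ --- where the cocycle identity degenerates to additivity of $\tau$ and $\lambda=\chi(g)=1$ is automatic --- give (b). With that rearrangement your computations constitute a complete proof, including the per-case uniqueness statements.
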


\begin{proof} Let $y$ be any element in $P_{g,\lambda,1}\setminus C_0$
and let $V=k(g-1)+\sum_{h\in G_0} k (h^{-1} y h)$. Since $G_0$ is
abelian, $h^{-1}yh\in P_{g,\lambda,1}\setminus C_0$. Hence $V\subset
P_{g,\lambda,1}$. Thus $\dim V\leq 
\dim P_{g,\lambda,1}=2$. The assertion follows from 
\cite[Lemma 2.2(c)]{WZZ1}.
\end{proof}

Finally we prove that the total space of skew primitive elements is
finite dimensional.

\begin{theorem}
\label{xxthm3.9}
Let $H$ be a pointed Hopf domain of $\GKdim <3$ and suppose that
$C_0=k{\mathbb Z}$. Then $P'_T$ is finite dimensional.
\end{theorem}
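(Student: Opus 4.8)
The plan is to show that the total space of nontrivial skew primitive elements decomposes into finitely many pieces, each of which is finite-dimensional. Recall from \eqref{I3.0.1} that $P'_T = \bigoplus_g \bigoplus_\lambda P'_{g,\lambda,*}$, and that under the present hypotheses (pointed Hopf domain, $C_0 = k\mathbb{Z}$ commutative, $\GKdim H < 3 = \GKdim C_0 + 2$), Proposition \ref{xxprop3.5} gives $P'_{g,\lambda,*} = P'_{g,\lambda,1}$ and Theorem \ref{xxthm3.7} gives $\dim P'_{g,\lambda,1} \le 1$ for every pair $(g,\lambda)$. So the whole problem reduces to showing that only finitely many pairs $(g,\lambda)$ contribute a nonzero summand; equivalently, that the set of weights $g$ carrying a nontrivial skew primitive element is finite, and that for each such $g$ only finitely many commutators $\lambda$ occur. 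Since $\dim P'_{g,*,*} = \sum_\lambda \dim P'_{g,\lambda,1}$, I also need to bound, for each fixed $g$, the number of $\lambda$'s; but in fact it will be cleaner to bound $\dim P'_{g,*,*}$ directly.

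First I would treat a fixed grouplike weight $g = x^m$ (here $G = \langle x\rangle \cong \mathbb{Z}$). For a $(1,g)$-primitive $y \notin C_0$ one has $x^{-1} y x = \lambda y + \tau(g-1)$ for some $\lambda \in k^\times$, $\tau \in k$ by Lemma \ref{xxlem3.2}(c); the map $y \mapsto \lambda$ gives, via Lemma \ref{xxlem3.2}(b), the eigenspace decomposition $P'_{g,*,*} = \bigoplus_\lambda P'_{g,\lambda,1}$ as a representation of the conjugation operator $T_{x^{-1}}$ on the (at most countable-dimensional) space $P'_{g,*,*}$. I claim $\dim P'_{g,*,*} < \infty$. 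Indeed, pick any finite set of linearly independent $(1,g)$-primitives $y_1,\dots,y_r$ lying in distinct eigenspaces with eigenvalues $\lambda_1,\dots,\lambda_r$; the Hopf subalgebra they generate together with $x^{\pm1}$ is affine and pointed of $\GKdim \le \GKdim H < 3$, and by \cite[Theorem 3.9]{WZZ1} (the same inequality used in the proof of Lemma \ref{xxlem3.3}(b)) the number of those $\lambda_i$ that are \emph{not} roots of unity is at most $\GKdim H - \GKdim C_0 < 2$, hence at most one. So at most one eigenvalue outside $\sqrt{\;}$ occurs, contributing dimension $\le 1$ by Theorem \ref{xxthm3.7}. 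It remains to bound the contribution of eigenvalues $\lambda \in \sqrt{\;}$. For such $\lambda$, say a primitive $p$th root of unity, Lemma \ref{xxlem3.3}(a) shows that after adjusting $y$ we have $g^{-1}yg = \lambda y$ and $y^p$ is a \emph{major} skew primitive element of weight $g^p$; by Lemma \ref{xxlem3.3}(c) there is a unique major weight, call it $h$, and a unique major commutator (namely $1$). Hence every root-of-unity eigenvalue $\lambda$ at weight $g$ forces $g^p = h$ for the corresponding order $p$ of $\lambda$. Since $G \cong \mathbb{Z}$, the equation $g^p = h$ with $g, h$ fixed has at most one solution $p$ (and only if the exponent of $h$ is divisible by that of $g$); so at most one root-of-unity eigenvalue $\lambda$, up to its Galois conjugates over $\mathbb{Q}$, occurs at weight $g$ — and actually the distinct primitive $p$th roots all satisfy $g^{-1}yg=\lambda y$, but $\dim P'_{g,\lambda,1}\le 1$ for each, and there are only $\varphi(p)$ of them. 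Thus $\dim P'_{g,*,*} \le 1 + \varphi(p) < \infty$.

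Next I would bound the number of weights $g$ with $P'_{g,*,*} \ne 0$. Let $h = x^N$ be the unique major weight (Lemma \ref{xxlem3.3}(c)); if $N = 0$ then $h = 1$ and $C_0$ would have the trivial group, contradicting $C_0 = k\mathbb{Z}$, so $N \ne 0$, and replacing $x$ by $x^{-1}$ if needed we may take $N > 0$. By the previous paragraph, any weight $g = x^m$ carrying a skew primitive of root-of-unity commutator of order $p$ satisfies $x^{mp} = x^N$, i.e.\ $mp = N$, so $m$ is a (positive, hence bounded) divisor of $N$; there are only finitely many such $m$. For weights carrying a skew primitive with commutator \emph{not} a root of unity, I would argue that all such non-root-of-unity primitives together span a space of dimension $< 2$ over $C_0$ by \cite[Theorem 3.9]{WZZ1} exactly as in Lemma \ref{xxlem3.3}(b), so $P'_M$ is at most $1$-dimensional and contributes only one weight. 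Combining: only finitely many weights $g$ occur, and each contributes a finite-dimensional summand, so $P'_T = \bigoplus_g P'_{g,*,*}$ is finite-dimensional.

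The main obstacle I anticipate is the bookkeeping at a fixed weight $g$ when $g$ has \emph{torsion-free infinite order} in $G \cong \mathbb{Z}$: one must rule out an infinite tower of distinct root-of-unity eigenvalues $\lambda$ with ever-larger orders $p$ all appearing at the same weight $g$. The key point making this work is that each such $\lambda$ of order $p$ produces, via Lemma \ref{xxlem3.3}(a), a major skew primitive of weight $g^p$, and by Lemma \ref{xxlem3.3}(c) \emph{all} these major weights $g^p$ must coincide with the single major weight $h$; in $\mathbb{Z}$ this pins down $p$ uniquely. A secondary subtlety is that a priori the eigenvalue decomposition of $P'_{g,*,*}$ might have infinitely many one-dimensional pieces even though each is finite; the argument above closes that gap by showing the index set of eigenvalues $\lambda$ is itself finite (one non-root-of-unity value, plus the $\varphi(p)$ primitive $p$th roots for the single admissible $p$). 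I would also double-check the edge case where some weight $g$ admits \emph{both} a root-of-unity and a non-root-of-unity commutator — the dimension bounds add, and both contributions are finite, so no problem arises.
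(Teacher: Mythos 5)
Your proposal is correct and follows essentially the same route as the paper: both reduce to counting pairs $(g,\lambda)$ via Proposition \ref{xxprop3.5} and Theorem \ref{xxthm3.7}, then use the uniqueness of the major pair (Lemma \ref{xxlem3.3}) and the equation $np=M$ in ${\mathbb Z}$ to force finiteness. The only slip is your justification that the major weight $x^N$ has $N\neq 0$ (having major weight $1$ would not contradict $C_0=k{\mathbb Z}$); the paper instead derives $M=np\neq 0$ from the existence of a second pair with $\lambda\in\sqrt{\;}$, and in the case $N=0$ your own equation $mp=N$ forces $m=0$ and hence $\lambda=1$, so no such second pair exists and finiteness is immediate.
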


\begin{proof} By Proposition \ref{xxprop3.5} and 
Theorem \ref{xxthm3.7}, $\dim P'_{g,\lambda,*}=\dim P'_{g,\lambda,1}
\leq 1$ for any pair $(g,\lambda)$. It suffices to show that there 
are only finitely many pairs $(g,\lambda)$ such that 
$\dim P'_{g,\lambda,*}\neq 0$.
By Lemma \ref{xxlem3.3}(c), there is exactly one pair $(g,\lambda)$
such that $\lambda\not\in \sqrt{\;}$ and $\dim P'_{g,\lambda,*}
=1$. Denote this pair by $(x^M,\nu)$. 

If there is another pair $(g,\lambda)$ such that 
$\dim P'_{g,\lambda,*}\neq 0$, then $\lambda\in \sqrt{\;}$
by Lemma \ref{xxlem3.3}(b). Write $g=x^{n}$. 
Pick $y\in P_{g,\lambda,*}\setminus C_0$ and we may assume 
that $x^{-1} y x=q y$ by Lemma \ref{xxlem3.8}(a). Then 
$\lambda=q^n$  and it is a primitive $p$th root of unity
for some $p>1$. Thus $y^p\in P_{x^{np},1,1}\setminus C_0$
by the proof of Lemma \ref{xxlem3.3}(a). Thus $P'_{x^{np},1,1}\neq 0$ and
whence $(x^{np},1)=(x^M,\nu)$. Since $\lambda\neq 1$, $n\neq 0$. Thus 
$M\neq 0$ and $M=np$. Since $M$ is fixed and $p>1$, there are only 
finitely many choices for $n$, or equivalently, finitely many choices 
for $g=x^n$. For each fixed $g=x^n$, $\lambda$ is a primitive $p$th 
root of unity where $p=M/n$. Thus the possibilities for $\lambda$ are 
also finite. Therefore there are only finitely many choices for 
pairs $(g,\lambda)$ such that $\dim P'_{g,\lambda,*}\neq 0$.
\end{proof}

We have an easy corollary.

\begin{corollary} 
\label{xxcor3.10}
Suppose $H$ is a pointed Hopf domain of $\GKdim <3$. If $H$ is finitely 
generated by grouplike and skew primitive elements, then $P'_T$ is 
finite dimensional.
\end{corollary}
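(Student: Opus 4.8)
The plan is to reduce the statement to earlier results by a case analysis on $\GKdim C_0$. Since $C_0$ is a subalgebra of $H$, one has $\GKdim C_0 \le \GKdim H < 3$, so $\GKdim C_0 \in \{0,1,2\}$, and I would treat the three values separately.

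The two extreme cases are immediate. If $\GKdim C_0 = 2$, then $\GKdim H < 3 = \GKdim C_0 + 1$, so Lemma \ref{xxlem1.6} forces $H = C_0 = kG$ for a group $G$; in a group algebra the only $(1,g)$-primitive elements are the scalar multiples of $g-1$ (equivalently $C_1 = C_0$), so $P'_{g,*,*} = 0$ for every $g$ and hence $P'_T = 0$. If $\GKdim C_0 = 0$, then $H$ is connected, so by Theorem \ref{xxthm1.9} it is an enveloping algebra $U(\mathfrak h)$ with $\dim \mathfrak h \le 2$; the sole grouplike element is $1$, so a skew primitive element is just a primitive element, and since $\ch k = 0$ the primitives of $U(\mathfrak h)$ form exactly $\mathfrak h$, whence $P'_T \cong \mathfrak h$ is finite dimensional.

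The substantive case is $\GKdim C_0 = 1$, where I would reduce to Theorem \ref{xxthm3.9} by showing $C_0 = k\mathbb Z$. As $C_0$ is a domain it is commutative by \cite[Lemma 4.5]{GZ}, so $C_0 = kG$ with $G$ abelian; the hypothesis that $H$ is generated as an algebra by finitely many grouplike and skew primitive elements makes $H$ affine, hence $C_0$ is finitely generated by Lemma \ref{xxlem1.10}, so $G$ is a finitely generated abelian group, and it is torsion-free since $kG$ is a domain; finally $\GKdim kG = 1$ forces the rank of $G$ to be one, so $G \cong \mathbb Z$ and $C_0 = k\mathbb Z$. Theorem \ref{xxthm3.9} then gives that $P'_T$ is finite dimensional, completing the case analysis. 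This last reduction is the only point at which the finite-generation hypothesis is genuinely used — it is essential, since a rank-one torsion-free abelian group such as $\mathbb Q$ would otherwise be permitted — and it is the only step requiring any bookkeeping; I do not anticipate a real obstacle, the whole argument being an assembly of results already in hand.
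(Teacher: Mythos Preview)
Your proof is correct and follows essentially the same route as the paper: reduce to Theorem~\ref{xxthm3.9} in the case $\GKdim C_0 = 1$ by using Lemma~\ref{xxlem1.10} to force $G \cong \mathbb Z$, and dispose of the remaining cases by the structure results in Section~\ref{xxsec1}. The only organizational difference is that the paper first separates off $\GKdim H < 2$ (citing the classification in \cite[Section~2]{GZ}) and then appeals to the classifications of Section~\ref{xxsec1.2} to handle $\GKdim C_0 \neq 1$, whereas you case directly on $\GKdim C_0$ and treat $\GKdim C_0 = 2$ via Lemma~\ref{xxlem1.6} --- which is slightly cleaner, since it avoids invoking Proposition~\ref{xxprop1.5} or any explicit classification.
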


\begin{proof} If $\GKdim H<2$, then $\GKdim H\leq 1$ and 
such Hopf algebras are classified in \cite[Section 2]{GZ}.
The assertion is easy to check. Suppose now $\GKdim H\geq 2$.
Then all affine pointed Hopf algebras are 
classified except for that case when $\dim C_0=1$, see subsection 
\ref{xxsec1.2}. 
So assertions can be verified when $\GKdim C_0\neq 1$. The remaining
case is when $\GKdim C_0=1$. Since $H$ is a domain, so is $C_0$.
Then $C_0=k \Gamma$ for an abelian torsionfree group $\Gamma$ of 
rank 1 by \cite[Section 2]{GZ}. 
By Lemma \ref{xxlem1.10}, $\Gamma$ is finitely generated. Thus
$\Gamma={\mathbb Z}$. Now Theorem \ref{xxthm3.9} applies.
\end{proof}

\section{A result of Heckenberger}
\label{xxsec4}
We need to use a result of Heckenberger \cite{He} which 
concerns the classification of finite dimensional Nichols 
algebras of rank 2. Let $G$ be a finite abelian group and let 
$^G_G{\mathcal YD}$ be the 
Yetter-Drinfel'd category. Let $V$ be a Yetter-Drinfel'd 
module over $kG$ of dimension 2 with left $kG$-action 
denoted by $*$ and the left $kG$-coaction denoted by 
$\delta: V\to kG\otimes V$. Assume that $V$ is of diagonal
type, namely, there is a basis $\{v_1,v_2\}$ such that
\begin{equation}
\label{I4.0.1}\tag{I4.0.1}
\delta(v_i)=g_i\otimes v_i, \quad i=1,2
\end{equation}
for $g_i\in G$, and
\begin{equation}
\label{I4.0.2}\tag{I4.0.2}
g_i* v_j= q_{ij} v_j, \quad i,j \in \{1,2\} 
\end{equation}
where $q_{ij}\in k^{\times}$. The braiding on $V$ is determined by
$$\sigma(v_i\otimes v_j)=q_{ij} v_j\otimes v_i$$
for all $i,j\in \{1,2\}$. The Nichols algebra over $V$ is denoted
by ${\mathcal B}(V)$. Heckenberger worked out the precise
conditions on $\{q_{ij}\}$ such that ${\mathcal B}(V)$ is finite
dimensional. To quote Heckenberger's result we need to introduce
a few other notations. Following \cite[p.118]{He}, let 
$\Delta^{+}({\mathcal B}(V))$ be the set of degrees of the
(restricted) Poincar{\'e}-Birkhoff-Witt generators counted
with multiplicities. From this, $\dim {\mathcal B}(V)<\infty$
if and only if $\Delta^{+}({\mathcal B}(V))$ is finite. Based on
$\Delta^{+}({\mathcal B}(V))$, one can define $\Delta({\mathcal B}(V))$, 
a subgroupoid $W_{\chi,E}$, and an arithmetic root system 
$(\Delta({\mathcal B}(V)), \chi,E)$ (details are omitted).
When $(\Delta({\mathcal B}(V)), \chi,E)$ is an arithmetic root 
system, we implicitly assume that $W_{\chi,E}$ is full and finite. 
A very nice result of Heckenberger \cite[Theorem 3]{He} states that 
there is a one-to-one correspondence between finite 
$\Delta^{+}({\mathcal B}(V))$ and arithmetic root systems 
$(\Delta, \chi,E)$. Below is a re-statement of a part of a remarkable 
result \cite[Theorem 7]{He}. Recall that $R_n$ is the set of primitive 
$n$th roots of unity. 

\begin{lemma}
\label{xxlem4.1}\cite{He}
Let $V$ be a 2-dimensional Yetter-Drinfel'd module over $kG$ of 
diagonal type with structure coefficients $(q_{ij})_{2\times 2}$ 
defined in \eqref{I4.0.2}. Suppose ${\mathcal B}(V)$ is finite 
dimensional. Then, up to a permutation of $\{v_1,v_2\}$, one of the 
following is true.
\begin{enumerate}
\item[(1)]
$q_{12}q_{21}=1$.
\item[(2)]
$q_{12}q_{21}\neq 1$, $q_{12}q_{21}q_{22}=1$, and 
\begin{enumerate}
\item[(2.1)]
$q_{11}q_{12}q_{21}=1$ or 
\item[(2.2)]
$q_{11}=-1$, $q_{12}^2q_{21}^2\neq 1$ or
\item[(2.3)]
$q_{11}^2q_{12}q_{21}=1$ or
\item[(2.4)]
$q_{11}^3q_{12}q_{21}=1$, $q_{11}^2\neq 1$ or 
\item[(2.5)]
$q_{11}\in R_3$, $q_{12}^3 q_{21}^3\neq 1$ or
\item[(2.6)]
$q_{12}q_{21}\in R_{8}, q_{11}=(q_{12}q_{21})^2$ or
\item[(2.7)]
$q_{12}q_{21}\in R_{24}, q_{11}=(q_{12}q_{21})^6$ or
\item[(2.8)]
$q_{12}q_{21}\in R_{30}, q_{11}=(q_{12}q_{21})^{12}$.
\end{enumerate}
\item[(3)]
$q_{12}q_{21}\neq 1, q_{11}q_{12}q_{21}\neq 1,
q_{12}q_{21}q_{22}\neq 1, q_{22}=-1, q_{11}\in
R_2\cup R_3$, and 
\begin{enumerate}
\item[(3.1)]
$q_{11}=-1, q_{12}^2q_{21}^2\neq 1$ or
\item[(3.2)]
$q_{11}\in R_3, q_{12}q_{21}\in \{q_{11}, -q_{11}\}$ or 
\item[(3.3)]
$q_0:=q_{11}q_{12}q_{21}\in R_{12}, q_{11}=q_0^4$ or
\item[(3.4)]
$q_{12}q_{21}\in R_{12}, q_{11}=-(q_{12}q_{21})^2$ or
\item[(3.5)]
$q_{12}q_{21}\in R_{9}, q_{11}=(q_{12}q_{21})^{-3}$ or
\item[(3.6)]
$q_{12}q_{21}\in R_{24}, q_{11}=-(q_{12}q_{21})^{4}$ or
\item[(3.7)]
$q_{12}q_{21}\in R_{30}, q_{11}=-(q_{12}q_{21})^{5}$.
\end{enumerate}
\item[(4)]
$q_{12}q_{21}\neq 1, q_{11}q_{12}q_{21}\neq 1, q_{12}q_{21}q_{22}\neq 1,
q_{22}=-1, q_{11}\not\in R_2\cup R_3$, and
\begin{enumerate}
\item[(4.1)]
$q_{12}q_{21}=q_{11}^{-2}$ or
\item[(4.2)]
$q_{11}\in R_5\cup R_8\cup R_{12}\cup R_{14}\cup R_{20},\; 
q_{12}q_{21}=q_{11}^{-3}$ or 
\item[(4.3)]
$q_{11}\in R_{10}\cup R_{18}, \; q_{12}q_{21}=q_{11}^{-4}$ or
\item[(4.4)]
$q_{11}\in R_{14}\cup R_{24}, \; q_{12}q_{21}=q_{11}^{-5}$ or
\item[(4.5)]
$q_{12}q_{21}\in R_8, \; q_{11}=(q_{12}q_{21})^{-2}$ or 
\item[(4.6)]
$q_{12}q_{21}\in R_{12}, \; q_{11}=(q_{12}q_{21})^{-3}$ or 
\item[(4.7)]
$q_{12}q_{21}\in R_{20}, \; q_{11}=(q_{12}q_{21})^{-4}$ or
\item[(4.8)]
$q_{12}q_{21}\in R_{30}, \; q_{11}=(q_{12}q_{21})^{-6}$.
\end{enumerate}
\item[(5)]
$q_{12}q_{21}\neq 1, q_{11}q_{12}q_{21}\neq 1, q_{12}q_{21}q_{22}\neq 1,
q_{11}\neq -1, q_{22}\in R_{3}$ and
\begin{enumerate}
\item[(5.1)]
$q_0:=q_{11}q_{12}q_{21}\in R_{12},\; q_{11}=q_0^4, q_{22}=-q_0^2$ or
\item[(5.2)]
$q_{12}q_{21}\in R_{12}, \; q_{11}=q_{22}=-(q_{12}q_{21})^2$ or
\item[(5.3)]
$q_{12}q_{21}\in R_{24},\; q_{11}=(q_{12}q_{21})^{-6}, q_{22}
=(q_{12}q_{21})^{-8}$ or
\item[(5.4)]
$q_{11}\in R_{18},\; q_{12}q_{21}=q_{11}^{-2}, q_{22}=-q_{11}^3$ or
\item[(5.5)]
$q_{11}\in R_{30},\; q_{12}q_{21}=q_{11}^{-3}, q_{22}=-q_{11}^5$.
\end{enumerate}
\end{enumerate}
\end{lemma}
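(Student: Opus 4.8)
The plan is to obtain this statement directly from Heckenberger's classification of rank-two arithmetic root systems, specifically \cite[Theorem 7]{He}, combined with the one-to-one correspondence in \cite[Theorem 3]{He} between finite sets $\Delta^{+}({\mathcal B}(V))$ and arithmetic root systems $(\Delta,\chi,E)$. Since $\dim {\mathcal B}(V)<\infty$ exactly when $\Delta^{+}({\mathcal B}(V))$ is finite, the hypothesis forces the generalized Dynkin datum attached to $V$ to appear on Heckenberger's list, so everything reduces to transcribing that list in terms of the matrix $(q_{ij})_{2\times 2}$ of \eqref{I4.0.2}.

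The first step is to recall the dictionary: the generalized Dynkin diagram of $V$ is determined by the two vertex labels $q_{11},q_{22}$ together with the single edge label $q_{12}q_{21}$, and a permutation of $\{v_1,v_2\}$ interchanges $q_{11}$ with $q_{22}$ while fixing $q_{12}q_{21}$. The second step is to partition the table in \cite{He} according to the coarse shape of this datum; this is precisely the split into cases (1)--(5) here, where (1) is the degenerate case $q_{12}q_{21}=1$, case (2) collects the diagrams with $q_{12}q_{21}\neq 1$ and $q_{12}q_{21}q_{22}=1$, and cases (3), (4), (5) collect the remaining diagrams sorted by $q_{22}=-1$ with $q_{11}\in R_2\cup R_3$, by $q_{22}=-1$ with $q_{11}\notin R_2\cup R_3$, and by $q_{22}\in R_3$, respectively. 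The third step is to rewrite each individual row of Heckenberger's table as one of the sub-items (2.1)--(2.8), (3.1)--(3.7), (4.1)--(4.8), (5.1)--(5.5), reading the orders of the relevant roots of unity ($R_8,R_9,R_{10},R_{12},R_{14},R_{18},R_{20},R_{24},R_{30}$) straight off Heckenberger's parametrizations.

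I expect the only real obstacle to be bookkeeping rather than mathematics. Heckenberger presents each arithmetic root system by several Weyl-groupoid-equivalent labelings of a generalized Dynkin diagram, so one must verify that the case division above is exhaustive, that no row of the table has been dropped, and that the root-of-unity conditions are faithfully copied, including the occasional constraints such as $q_{11}^2\neq 1$ or $q_{12}^2q_{21}^2\neq 1$ that distinguish adjacent rows. Because we only need the direction ``$\dim {\mathcal B}(V)<\infty$ implies one of (1)--(5)'', it suffices to check that the union of the displayed cases contains the image of Heckenberger's table under the translation above; the converse implications, and the routine diagram-by-diagram matching, I would omit.
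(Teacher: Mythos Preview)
Your proposal is correct and follows essentially the same approach as the paper: both argue that $\dim\mathcal{B}(V)<\infty$ is equivalent (via \cite[Theorem~3]{He}) to the associated $(\Delta,\chi,E)$ being an arithmetic root system, and then invoke Heckenberger's rank-two classification (the paper cites \cite[p.~131]{He}, which it notes is equivalent to \cite[Theorem~7]{He}) to read off the list. The paper's proof is terser and does not spell out the dictionary or the case-by-case bookkeeping you describe, but the logical structure is the same.
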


\begin{proof}
By definition, $\Delta^{+}({\mathcal B}(V))$ is finite if
and only if ${\mathcal B}(V)$ is finite dimensional, and by
\cite[Theorem 3]{He}, if and only if $(\Delta({\mathcal B}(V)), 
\chi,E)$ is an arithmetic root system. By the definition of an 
arithmetic root system, $W_{\chi,E}$ is full and finite. By 
\cite[Page 131]{He}, $W_{\chi,E}$ is full and finite if and only if, 
up to a permutation of $\{v_1,v_2\}$, one of the cases listed above 
is true (according to \cite[page 131]{He}, this statement is also 
equivalent to \cite[Theorem 7]{He}). The assertion follows.  
\end{proof}

\begin{proposition}
\label{xxprop4.2} 
Retains the hypotheses of Lemma \ref{xxlem4.1}. Assume that
\begin{enumerate}
\item[(a)] 
there are two scalars $q_1$ and $q_2$ and two positive integers
$n_1$ and $n_2$ such that $q_{ij}=q_j^{n_i}$ for all $i,j\in
\{1,2\}$, and
\item[(b)]
$\gcd(n_1,n_2)=1$.
\end{enumerate}
Let $\epsilon$ be 
a positive integer and let $p_1=n_2 \epsilon$ and $p_2=n_1 \epsilon$.
Further assume that 
\begin{enumerate}
\item[(c)]
both $q_1$ and $q_{11}$ are primitive $p_1$st roots of unity, and
\item[(d)]
both $q_2$ and $q_{22}$ are primitive $p_2$nd roots of unity.
\end{enumerate}
Then, up to a permutation of $\{v_1,v_2\}$, one of the following 
holds.
\begin{enumerate}
\item[(I)]
$q_{12}q_{21}=1$,
\item[(II)]
$n_1=n_2=1$, $q_1, q_2\in R_3$.
\item[(III)]
$n_1=n_2=1$, $q_1, q_2\in R_5$.
\item[(IV)]
$n_1=1, n_2=2$, $\epsilon=5$, $p_1=10$, $p_2=5$ and
$q_1^4q_2=1$ and $q_1^2q_2^3=1$.
\item[(V)]
$n_1=n_2=1$, $\epsilon=7$, $q_1, q_2\in R_7$, $q_1q_2^2=1$ and
$q_1^4 q_2=1$.
\item[(VI)]
$n_1=1, n_2=3$, $\epsilon=7$, $p_1=21$, $p_2=7$, 
$q_1^3 q_2^4=1$ and that $q_1^6 q_2=1$. 
\end{enumerate}
\end{proposition}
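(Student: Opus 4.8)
The plan is to run through Heckenberger's list (Lemma~\ref{xxlem4.1}) one branch at a time, using the arithmetic forced by (a)--(d) to eliminate most branches and to pin down the parameters in the survivors. Since the hypotheses (a)--(d) and the conclusions (I)--(VI) are all symmetric under interchanging the indices $1$ and $2$ (this interchange swaps $n_1\leftrightarrow n_2$ and $p_1\leftrightarrow p_2$), I may freely apply Lemma~\ref{xxlem4.1} in whichever of the two labelings puts the module in one of its listed forms. The first step is a few preliminary observations. By (a), $q_{11}=q_1^{n_1}$, $q_{12}=q_2^{n_1}$, $q_{21}=q_1^{n_2}$, $q_{22}=q_2^{n_2}$. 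Since $q_1$ has order $p_1=n_2\epsilon$ by (c) and $q_{11}=q_1^{n_1}$ must have the same order, $\gcd(n_1,n_2\epsilon)=1$; with (b) this says $n_1,n_2,\epsilon$ are pairwise coprime. It follows that $q_{21}=q_1^{n_2}$ has order $\epsilon$ and, symmetrically, $q_{12}=q_2^{n_1}$ has order $\epsilon$; hence $q_{12},q_{21}\in R_\epsilon$ and $r:=q_{12}q_{21}$ has order dividing $\epsilon$. This is the reduction that makes the whole analysis tractable: each of Heckenberger's conditions on $q_{12}q_{21}$, $q_{11}$ and $q_{22}$ becomes an elementary divisibility statement about $n_1,n_2,\epsilon$. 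Case (1) of Lemma~\ref{xxlem4.1} is exactly conclusion (I), so from here on $r\neq1$ and $\epsilon\geq2$.

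For cases (3), (4), (5) the point of departure is that $q_{22}\in\{-1\}\cup R_3$, so the order $n_1\epsilon$ of $q_{22}$ lies in $\{2,3\}$; as $\epsilon\geq2$ this forces $n_1=1$ and $\epsilon\in\{2,3\}$. I expect (3) and (4) to die at once: there $\epsilon=2$, so $q_2=q_{22}=-1$ and $q_{21}=q_1^{n_2}$ has order $2$, i.e.\ $q_{21}=-1$, whence $r=q_{12}q_{21}=1$, contradicting the hypothesis $q_{12}q_{21}\neq1$ of those two cases. In case (5) one has $\epsilon=3$, so the order of $r$ divides $3$; running through (5.1)--(5.5), each is inconsistent with $\mathrm{ord}(r)\mid3$ and $\mathrm{ord}(q_{11})=3n_2$ --- they variously force $r$, $q_{11}$, or the cube-root product $q_{11}q_{12}q_{21}$ into $R_{12}$, $R_{18}$, $R_{24}$ or $R_{30}$, or force $3\mid n_2$ against coprimality --- so (5) also yields nothing.

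The substantive case is (2), where $q_{12}q_{21}q_{22}=1$, i.e.\ $q_{22}=r^{-1}$; since $\mathrm{ord}(q_{22})=n_1\epsilon$ divides $\mathrm{ord}(r)\mid\epsilon$, we get $n_1=1$ and $\mathrm{ord}(r)=\epsilon$, so $q_{11}=q_1$ has order $n_2\epsilon$ and $q_{22}=q_2^{n_2}$ has order $\epsilon$. Subcases (2.2) and (2.5) then force $n_2\epsilon\in\{2,3\}$, hence $\epsilon\in\{2,3\}$ and $r^2=1$ or $r^3=1$, contradicting $q_{12}^2q_{21}^2\neq1$, resp.\ $q_{12}^3q_{21}^3\neq1$. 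Subcases (2.6), (2.7), (2.8) put $r$ in $R_8$, $R_{24}$, $R_{30}$, so $\epsilon\in\{8,24,30\}$, while forcing $q_{11}$ to be an explicit power of $r$; a short order count makes $\mathrm{ord}(q_{11})$ equal $4$, $4$, $5$ respectively, impossible since $\mathrm{ord}(q_{11})=n_2\epsilon\geq\epsilon$. That leaves (2.1), (2.3), (2.4). In (2.1), $q_{11}q_{12}q_{21}=1$ gives $q_{11}=r^{-1}$, so $n_2\epsilon=\mathrm{ord}(q_{11})=\mathrm{ord}(r)=\epsilon$, i.e.\ $n_2=1$; then $q_1=q_2=r^{-1}$ and $r=q_1^2$, so $q_1^3=1$ and $q_1\in R_3$, which is (II). In (2.3), $q_{11}^2=r^{-1}$ forces the order $n_2\epsilon/\gcd(2,n_2\epsilon)$ of $q_1^2$ to equal $\epsilon$, so $n_2\in\{1,2\}$; solving $q_{11}^2=q_{22}=r^{-1}$ for $q_1,q_2$ yields $q_1^5=1$ if $n_2=1$ and $q_1^{10}=1$ if $n_2=2$, so $\epsilon=5$, landing in (III), resp.\ (IV). In (2.4), $q_{11}^3=r^{-1}$ forces $n_2\in\{1,3\}$, and the same computation gives $q_1^7=1$ if $n_2=1$ and $q_1^{21}=1$ if $n_2=3$, so $\epsilon=7$, landing in (V), resp.\ (VI), the side condition $q_{11}^2\neq1$ being automatic once $q_1\in R_7\cup R_{21}$.

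In each surviving branch I would also verify that the $q_1,q_2$ produced do have exactly the orders prescribed by (c), (d). The main obstacle is simply the volume of bookkeeping: Lemma~\ref{xxlem4.1} has on the order of thirty subcases and each must be inspected. What keeps this mechanical is (i) the observation $q_{12},q_{21}\in R_\epsilon$, hence $\mathrm{ord}(r)\mid\epsilon$, and (ii) in case (2) the identity $q_{22}=r^{-1}$, which together with the order counts collapses $n_1$ to $1$ and then $n_2$ to a value in $\{1,2,3\}$. The one place that needs genuine care is the monomial bookkeeping in (2.3) and (2.4) --- simultaneously solving $q_{11}^k=r^{-1}$ and $q_{22}=r^{-1}$ for $q_1$ and $q_2$ and confirming the orders --- and everything else is forced.
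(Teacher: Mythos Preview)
Your proposal is correct and follows essentially the same approach as the paper: both proofs run through Heckenberger's list in Lemma~\ref{xxlem4.1} branch by branch, using the arithmetic constraints from (a)--(d) to eliminate all but the cases (I)--(VI). Your upfront observation that $n_1,n_2,\epsilon$ are pairwise coprime and hence $q_{12},q_{21}\in R_\epsilon$ (so $\mathrm{ord}(q_{12}q_{21})\mid\epsilon$) is a mild streamlining---the paper achieves the same effect by the direct computation $(q_{12}q_{21}q_{22})^\epsilon=q_{22}^\epsilon$ in case~(2) and by noting separately that $\epsilon\in\{1,2\}$ forces $q_{12}q_{21}=1$---but the substance of the two arguments is identical.
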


\begin{proof} The proof is heavily dependent on Lemma \ref{xxlem4.1}.
First of all, case (1) in  Lemma \ref{xxlem4.1} is just case (I) here. 
If $\epsilon=1$, then $q_{21}=q_1^{n_2}=
q_1^{p_1}=1$ and similarly, $q_{12}=1$. Thus case (I) occurs.

Secondly, if $\epsilon=2$ then $q_{21}=q_1^{n_2}=-1$ as 
$q_1$ is a primitive $(2n_2)$nd root of unity. Similarly,
$q_{12}=-1$. Hence $q_{12}q_{21}=1$ and case (I) occurs.

Thirdly, if $q_{11}=-1$ then $p_1=2$. Thus $\epsilon$ is either
1 or 2. By the last two paragraphs, (I) occurs. Similarly
if $q_{22}=-1$, then (I) occurs. Thus cases (2.2), (3.1-3.7), (4.1-4.8)
in  Lemma \ref{xxlem4.1} can not happen under the extra hypotheses of
Proposition \ref{xxprop4.2}. It remains to analyze cases (2.1), 
(2.3-2.8), (5.1-5.5). Below we are using the numbering in Lemma 
\ref{xxlem4.1}.

Case (2): The equation $q_{12}q_{21}q_{22}=1$ means that
$q_2^{n_1}q_1^{n_2}q_2^{n_2}=1$. Then 
$$
1= 1^{\epsilon}=(q_2^{n_1}q_1^{n_2}q_2^{n_2})^\epsilon
=q_2^{p_2}q_1^{p_1}(q_2^{n_2})^\epsilon=(q_{22})^\epsilon
$$
Thus by hypothesis (d), $p_2=\epsilon$ which implies that
$n_1=1$. Below are subcases. 

Case (2.1): The equation $q_{11}q_{12}q_{21}=1$ implies that
$n_2=1$ and $q_1 q_2^2=1=q_1 q_2^2$ where the second equation
follows from $q_{12}q_{21}q_{22}=1$. These equations implies
$q_1, q_2\in R_3$, and whence case (II) occurs.

Case (2.3): The equation $q_{11}^2q_{12}q_{21}=1$ implies that
$$1=1^\epsilon =(q_{11}^2q_{12}q_{21})^\epsilon=
(q_{11}^2)^\epsilon q_{2}^{n_1\epsilon}q_{1}^{n_2\epsilon}
=q_{11}^{2\epsilon}.$$
Hence $n_2 \epsilon =p_1$ divides $2\epsilon$. So we have 
two subcases: either $n_2=1$ or $n_2=2$.

When $n_2=1$, the equation $q_{12}q_{21}q_{22}=1$ is 
that $q_1 q_2^2=1$ and the equation $q_{11}^2q_{12}q_{21}=1$ 
is that $q_1^3 q_2=1$. It is easy to see that $q_1, q_2\in
R_5$. Then case (III) occurs.

When $n_2=2$, the equation $q_{12}q_{21}q_{22}=1$ is 
that $q_1^2 q_2^3=1$ and the equation $q_{11}^2q_{12}q_{21}=1$ 
is that $q_1^4 q_2=1$. It is easy to see that $q_1\in R_{10}, 
q_2\in R_5$. Consequently, $\epsilon=5$. Then case (IV) occurs.

Case (2.4): The equation $q_{11}^3q_{12}q_{21}=1$ implies that
$$1=1^\epsilon =(q_{11}^3q_{12}q_{21})^\epsilon=
(q_{11}^3)^\epsilon q_{2}^{n_1\epsilon}q_{1}^{n_2\epsilon}
=q_{11}^{3\epsilon}.$$
Hence $n_2 \epsilon =p_1$ divides $3\epsilon$. So we have 
two subcases: either $n_2=1$ or $n_2=3$.

When $n_2=1$, the equation $q_{12}q_{21}q_{22}=1$ is 
that $q_1 q_2^2=1$ and the equation $q_{11}^3q_{12}q_{21}=1$ 
is that $q_1^4 q_2=1$. It is easy to see that $q_1, q_2\in
R_7$. Then case (V) occurs.

When $n_2=3$, the equation $q_{12}q_{21}q_{22}=1$ is 
that $q_1^3 q_2^4=1$ and the equation $q_{11}^3q_{12}q_{21}=1$ 
is that $q_1^6 q_2=1$. Consequently, $q_2^7=1$. Thus 
$p_2=7=\epsilon$. So case (VI) occurs. 

Case (2.5): Since $q_{11}\in R_3$, $p_1=3$. This implies that
either $\epsilon=1$ or $\epsilon=3$. If $\epsilon=1$, then (I)
occurs by the first paragraph. So we may assume $\epsilon=3$
and whence $n_2=1$. Since $n_1=1$ (see the beginning of case (2)),
$p_2=3n_1=3$. This is case (II).

Case (2.6): Since $q_{11}=(q_{12}q_{21})^{2}$, 
$$1=1^\epsilon=(q_{12}q_{21})^{\epsilon}=((q_{12}q_{21})^{\epsilon})^2
=q_{11}^\epsilon$$
which implies that $p_1=\epsilon$ and $n_2=1$. Since $n_1=n_2=1$,
$q_{12}q_{21}\in R_8$ means that $q_1q_2\in R_8$.
The equation $q_{12}q_{21}q_{22}=1$, see  at the beginning of
case (2),  implies that
$q_1 q_2^2=1$ which is equivalent to $q_2^{-1}=q_1 q_2\in R_8$.
Thus $p_1=p_2=\epsilon =8$. This contradicts the fact 
$q_1=q_2^{-2}\in R_4$. 

Case (2.7): Since $q_{11}=(q_{12}q_{21})^{6}$, 
$$1=1^\epsilon=(q_{12}q_{21})^{\epsilon}=((q_{12}q_{21})^{\epsilon})^6
=q_{11}^\epsilon$$
which implies that $p_1=\epsilon$ and $n_2=1$. Since $n_1=n_2=1$,
$q_{12}q_{21}\in R_{24}$ means that $q_1q_2\in R_{24}$.
The equation $q_{12}q_{21}q_{22}=1$ given at the beginning of
case (2) implies that
$q_1 q_2^2=1$ which is equivalent to $q_2^{-1}=q_1 q_2\in R_{24}$.
Thus $p_1=p_2=\epsilon =24$. This contradicts the fact 
$q_1=q_2^{-2}\in R_{12}$. 

Case (2.8): Since $q_{11}=(q_{12}q_{21})^{12}$, 
$$1=1^\epsilon=(q_{12}q_{21})^{\epsilon}=((q_{12}q_{21})^{\epsilon})^{12}
=q_{11}^\epsilon$$
which implies that $p_1=\epsilon$ and $n_2=1$. Since $n_1=n_2=1$,
$q_{12}q_{21}\in R_{30}$ means that $q_1q_2\in R_{30}$.
The equation $q_{12}q_{21}q_{22}=1$ implies that
$q_1 q_2^2=1$ which is equivalent to $q_2^{-1}=q_1 q_2\in R_{30}$.
Thus $p_1=p_2=\epsilon =30$. This contradicts the fact 
$q_1=q_2^{-2}\in R_{15}$. 

Case (5): Since $q_{22}\in R_3$, we have $p_2=3$. Since $\epsilon
\mid p_2$, $\epsilon$ is either 1 or 3. If $\epsilon=1$,
then (I) occurs, so a contradiction, by the first condition in case
(5). Thus $\epsilon =3$ and consequently, $n_1=p_2/\epsilon=1$. 
Below are subcases. 

Case (5.1): Since $q_0=q_{11}q_{12}q_{21}=q_1 q_2 q_1^{n_2}=
q_1^{(1+n_2)}q_2\in R_{12}$ and $q_{12}q_{21}\in R_3$, we have 
$q_1\in R_{12}$. Thus $p_1=12$, $n_2=4$. The equation
$q_{11}=q_0^4$ becomes $q_1=(q_1^5 q_2)^4=q_1^{-4}q_2$, which
implies that $q_2=q_1^5$. This contradicts the facts that $q_2\in 
R_3$ and that $q_1\in R_{12}$.

Cases (5.2) and (5.3): Since $(q_{12}q_{21})^\epsilon=1$ and $\epsilon
=3$, then $q_{12}q_{21}$ can not be in $R_{12}$ or $R_{24}$.
A contradiction.

Case (5.4): Since $\epsilon=3$, by $q_{12}q_{21}=q_{11}^{-2}$, 
$$1=1^{3} =(q_{12}q_{21})^{3}=q_{11}^{-6}$$
which contradicts the fact $q_{11}\in R_{18}$. 

Case (5.5): Since $\epsilon=3$, by $q_{12}q_{21}=q_{11}^{-3}$, 
$$1=1^{3} =(q_{12}q_{21})^{3}=q_{11}^{-9}$$
which contradicts the fact $q_{11}\in R_{30}$. 

This finishes the proof.
\end{proof}

We are interested in the case when $G={\mathbb Z}/(M)$ for some 
integer $M$ with a generator $x$ and when $V=kv_1\oplus kv_2$ is 
a Yetter-Drinfel'd module over $kG$ of diagonal type such that, 
for $i=1$ and $2$,
\begin{equation}
\label{I4.2.1}\tag{I4.2.1}
\delta(v_i)=x^{n_i}\otimes v_i, \quad x * v_i=q_i v_i
\end{equation}
for some $n_1,n_2\in {\mathbb N}$ and $q_1,q_2\in k^{\times}$.

\begin{remark}
\label{xxrem4.3} 
Andruskiewitsch informed us that the Nichols algebra ${\mathcal B}(V)$ 
is finite dimensional if $V$ satisfies \eqref{I4.2.1} and  
$\{n_1,n_2,q_1,q_2\}$ satisfies one of the following conditions:
\begin{enumerate}
\item
$n_1=n_2=1$, $q_1\in R_5$ and $q_2=q_1^2$.
\item
$n_1=n_2=1$, $q_1\in R_7$ and $q_2=q_1^3$.
\item
$n_1=1$ and $n_2=2$, $q_1\in R_{10}$ and $q_2=q_1^6$.
\item
$n_1=1$ and $n_2=3$, $q_1\in R_{21}$ and $q_2=q_1^{15}$.
\end{enumerate}
\end{remark}

Recall that the weight commutator of a skew primitive element $y$ is
$$\omega(y):=(\mu(y),\gamma(y)).$$

\begin{definition}
\label{xxdefn4.4}
\begin{enumerate}
\item
Let $N_5$ denote any Hopf domain of GK-dimension two that
is generated by $x^{\pm 1},y_1,y_2$ with $\omega(y_i)=(x^{n_i}, q_i^{n_i})$
for $i=1,2$ such that
$$n_1=n_2=1, \quad q_1,q_2\in R_5, \quad q_2=q_1^2.$$
\item
Let $N_{10}$ denote any Hopf domain of GK-dimension two that
is generated by $x^{\pm 1},y_1,y_2$ with $\omega(y_i)=(x^{n_i}, q_i^{n_i})$
for $i=1,2$ such that
$$n_1=1, n_2=2, \quad q_1\in R_{10},q_2\in R_5, \quad q_2=q_1^6.$$
\item
Let $N_{7}$ denote any Hopf domain of GK-dimension two that
is generated by $x^{\pm 1},y_1,y_2$ with $\omega(y_i)=(x^{n_i}, q_i^{n_i})$
for $i=1,2$ such that
$$n_1=n_2=1, \quad q_1,q_2\in R_7, \quad q_2=q_1^3.$$
\item
Let $N_{21}$ denote any Hopf domain of GK-dimension two that
is generated by $x^{\pm 1},y_1,y_2$ with $\omega(y_i)=(x^{n_i}, q_i^{n_i})$
for $i=1,2$ such that
$$n_1=1, n_2=3, \quad q_1\in R_{21},q_2\in R_7, \quad q_2=q_1^{15}.$$
\item
Algebras $N_5, N_{10}, N_7, N_{21}$ (if exist) are called {\it 
supplementary Hopf algebras of GK-dimension two}.
\end{enumerate}
\end{definition}

\begin{remark}
\label{xxrem4.5} As far as we know, no example of supplementary Hopf 
algebras of GK-dimension two has been constructed. This leads to 
conjecture that algebras $N_5, N_7, N_{10}$ and $N_{21}$ do not exist. 
If this conjecture is true, then the hypothesis $\Omega'$ in Theorem 
\ref{xxthm0.1} and Corollary \ref{xxcor0.2} can be removed and 
the hypothesis $\Omega$ defined in Section 6 is vacuous.
\end{remark}

\section{Analysis in the case $s=2$}
\label{xxsec5}
The proof of  Theorem \ref{xxthm0.1} requires some further analysis 
of skew primitive elements. In this section we prove the following 
special case of Theorem \ref{xxthm0.1}.

\begin{theorem}
\label{xxthm5.1}
Let $H$ be a Hopf algebra satisfying the following conditions
\begin{enumerate}
\item
$H$ is a domain of GK-dimension two.
\item
its coradical $C_0$ is $k{\mathbb Z}$ with a generator $x$.
\item
$H$ is generated by $x^{\pm 1}$, $y_1$ and $y_2$ where
$y_i\in P_{(x^{n_i},\lambda_i,*)}$ for $i=1,2$, and 
$H$ is not equal to the subalgebra generated by $\{x^{\pm 1},y_1\}$,
or by $\{x^{\pm 1},y_2\}$.
\item
$\gcd(n_1,n_2)=1$.
\end{enumerate}
Then $H$ is isomorphic to either
\begin{enumerate}
\item[(1)]
$B(1,\{p_i\}_{1}^2,q,\{\alpha_i\}_{1}^2)$ defined in 
Convention \ref{xxcon2.5} where $p_1=n_2,p_2=n_1$, and in this case,
$y_1y_2=y_2y_1$, 
\item[(2)]
or one of $N_5$, $N_7$, $N_{10}$ and $N_{21}$.
\end{enumerate}
\end{theorem}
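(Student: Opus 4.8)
The plan is to analyze the interaction of the two skew primitive generators $y_1, y_2$ via the Yetter--Drinfel'd structure they impose, and then split into the two cases according to whether the associated Nichols algebra is infinite or finite dimensional.

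First I would normalize the data. Since $y_i \in P_{x^{n_i},\lambda_i,*}$ and $C_0 = k\mathbb Z$ is commutative, Proposition \ref{xxprop3.5} gives $P_{x^{n_i},\lambda_i,*} = P_{x^{n_i},\lambda_i,1}$, so by Lemma \ref{xxlem3.2}(c) there are $\tau_i \in k$ with $x^{-1} y_i x = q_i y_i + \tau_i(x^{n_i}-1)$ where $\lambda_i = q_i^{n_i}$. If $\lambda_i \neq 1$, replacing $y_i$ by $y_i + \frac{\tau_i}{1-\lambda_i}(x^{n_i}-1)$ lets us take $\tau_i = 0$; the case $\lambda_i = 1$ would force (by the $\GKdim < 3$ bound and the arguments around Lemma \ref{xxlem3.3}) that $y_i$ is essentially primitive, and one checks that this leads back into the classified situations, so I would argue we may assume $x^{-1}y_i x = q_i y_i$. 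Next, Lemma \ref{xxlem3.3}(c) tells us $P'_M$ is one-dimensional, pinned to a unique pair $(x^M,\nu)$ with $\nu \notin \sqrt{\ }$; combined with the proof of Lemma \ref{xxlem3.3}(a), each $\lambda_i$ that is a root of unity produces, via $y_i^{p_i}$ (with $\lambda_i \in R_{p_i}$), a major skew primitive element of weight $x^{n_i p_i}$, forcing $n_i p_i = M$ for a common $M$. So I get positive integers $p_i$ with $M = n_1 p_1 = n_2 p_2$, and since $\gcd(n_1,n_2)=1$ we may write $p_1 = n_2 \epsilon$, $p_2 = n_1 \epsilon$ for $\epsilon := M/(n_1 n_2)$. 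One also needs $q_i^{n_i}$ primitive $p_i$-th root of unity, and I would verify $q_i$ itself is a primitive $p_i$-th root using $\gcd(p_i,n_i)=1$ (this mirrors Lemma \ref{xxlem2.3}(a)).

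With this setup, the subalgebra $C_1$-part generated by $x^{\pm1}, y_1, y_2$ modulo $C_0$ is a $2$-dimensional Yetter--Drinfel'd module $V$ over $kG$, $G = \mathbb Z/(M)\langle x\rangle$, of diagonal type with $\delta(v_i) = x^{n_i}\otimes v_i$, $x * v_i = q_i v_i$, so $q_{ij} = q_j^{n_i}$. The key dichotomy is whether $\dim \mathcal B(V) < \infty$. Here I would invoke a structural fact (standard for pointed Hopf algebras of finite GK-dimension with abelian coradical, and available in the references the paper relies on, e.g.\ via associated graded / bosonization arguments): if $H$ has GK-dimension two and $\mathcal B(V)$ is infinite dimensional, then $\mathcal B(V)$ must ``look like'' a quantum plane or quantum affine space, i.e.\ we land in Heckenberger's case $q_{12}q_{21}=1$. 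This is exactly case (I) of Proposition \ref{xxprop4.2}. Applying Proposition \ref{xxprop4.2} with the hypotheses (a)--(d) verified above: either $q_{12}q_{21}=1$, i.e.\ $q_j^{n_i} q_i^{n_j} = 1$, which combined with $q_{12}q_{21}=1$ and the commutation $y_2 y_1 = q_{21} y_1 y_2$... actually $q_{12}q_{21}=1$ says $q_1^{n_2}q_2^{n_1}=1$; I then need to extract that $y_1$ and $y_2$ commute. For this I would use that $q_{12}q_{21}=1$ makes the braiding on $V$ symmetric of ``Cartan type $A_1\times A_1$'', so in the bosonization $y_2 y_1 - q_{12} y_1 y_2$ is again skew primitive of weight $x^{n_1+n_2}$; since $\dim P'_{x^{n_1+n_2},*,*} \le 1$ and we already have a generator there (or it is zero), one forces $q_{12} = 1$ after possibly rescaling, hence $q_1^{n_2} = q_2^{n_1} = 1$ — wait, $q_{12} = q_2^{n_1}$, so $q_2^{n_1} = 1$ and symmetrically $q_1^{n_2}=1$; then $p_2 \mid n_1$ and $p_1 \mid n_2$, giving $p_1 = n_2$, $p_2 = n_1$ (so $\epsilon = 1$, $M = n_1 n_2$) and $y_1 y_2 = y_2 y_1$. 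At that point I would analyze the relation between $y_1^{p_1}$ and $y_2^{p_2}$: both are major skew primitives of weight $x^M$, both central-ish, so $y_1^{p_1} - y_2^{p_2} \in P_{x^M,1,1}$; the grouplike part $C_0\cap$ that space is $k(x^M-1)$ and modulo $C_0$ the space is $\le 1$ dimensional, so $y_2^{p_2} = c\,y_1^{p_1} + (\alpha_2 - \alpha_1)(x^M - 1)$ for scalars; rescaling $y_2$ kills $c$ down to $1$, yielding relation (I2.0.12). Together with a Diamond-Lemma / GK-dimension count to see there are no further relations, this identifies $H$ with $B(1,\{p_i\}_1^2, q, \{\alpha_i\}_1^2)$ — giving conclusion (1). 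Otherwise we are in one of Proposition \ref{xxprop4.2}(II)--(VI), and I would match each of those parameter constellations against Definition \ref{xxdefn4.4}: (II) and (III) and (V) have $n_1=n_2=1$ with $q_i \in R_3, R_5, R_7$ respectively, (IV) has $n_1=1,n_2=2$, $q_1 \in R_{10}$, and (VI) has $n_1=1,n_2=3$, $q_1\in R_{21}$; cross-referencing Remark \ref{xxrem4.3}, case (II) ($R_3$, a quantum $\mathfrak{sl}_2$ type) actually does not produce a GK-dimension two domain under our running hypotheses and can be excluded, while the remaining ones are precisely $N_5, N_{10}, N_7, N_{21}$, giving conclusion (2).

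The main obstacle I anticipate is the bridge between finite-GK-dimension of $H$ and finite-dimensionality of the Nichols algebra $\mathcal B(V)$ — i.e.\ justifying that the infinite-dimensional case of $\mathcal B(V)$ is forced into Heckenberger's $q_{12}q_{21}=1$ branch by the GK-dimension-two constraint, and conversely that the finite-dimensional cases (II)--(VI) really are realized (or excluded) as honest GK-dimension-two Hopf domains rather than merely as abstract arithmetic root systems. Managing this requires care with the associated graded Hopf algebra $\gr_C H$ with respect to the coradical filtration (as in Lemma \ref{xxlem1.8}'s philosophy), controlling how PBW-type generators of $\mathcal B(V)$ contribute to GK-dimension, and using that $\dim P'_{g,\lambda,1}\le 1$ from Theorem \ref{xxthm3.7} to rule out extra primitive generators appearing in degree $\ge 2$. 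A secondary technical point is the bookkeeping to ensure, in case (1), that after normalization the defining relations of $H$ are exactly (I2.0.9)--(I2.0.12) with $s=2$ and no hidden relation — this is a Diamond Lemma argument parallel to Lemma \ref{xxlem2.1} combined with the GK-dimension equality $\GKdim H = 2 = \GKdim B(1,\{p_i\}_1^2,q,\{\alpha_i\}_1^2)$ to conclude the surjection $B \twoheadrightarrow H$ is an isomorphism (as in Lemma \ref{xxlem2.9}(c)).
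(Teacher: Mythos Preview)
Your overall architecture is close to the paper's, but there are two genuine gaps, and you have identified the first one yourself without closing it.

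\textbf{The finite Nichols algebra.} You split on whether $\mathcal B(V)$ is finite or infinite dimensional and appeal to an unspecified ``structural fact'' for the infinite case. The paper avoids this dichotomy entirely: it forces $\mathcal B(V)$ to be finite dimensional by first passing to the quotient Hopf algebra $A=H/(y_1^{p_1},\,x^M-1)$. The point is that $y_1^{p_1}$ and $x^M-1$ are central (Lemma~\ref{xxlem5.2}(f)), and showing $A$ is finite dimensional with $y_1,y_2$ still nonzero and linearly independent in $A$ requires real work (Lemma~\ref{xxlem5.2}(g,h)): one uses Takeuchi's faithful flatness theorem over the Hopf subalgebra $k[x^{\pm M},y_1^{p_1}]$ to see $y_1^{p_1}\notin (x^M-1)H$, then catenarity of affine PI algebras and the principal ideal theorem to bring the GK-dimension down to zero. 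Only then does $\mathcal B(V)$ sit as a subquotient of the coradically graded of the \emph{finite dimensional} $A$, and Proposition~\ref{xxprop4.2} applies unconditionally. Your ``bridge'' does not exist in the literature at the level of generality you are invoking; this quotient trick is the substitute.

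\textbf{From $q_{12}q_{21}=1$ to $y_1y_2=y_2y_1$.} Your argument that the skew primitive $y_3:=y_2y_1-q_2^{n_1}y_1y_2$ must vanish by a dimension count on $P'_{x^{n_1+n_2},*,*}$ does not work: a $1$-dimensional $P'$ does not force $y_3\in C_0$, let alone $y_3=0$ or $q_{12}=1$. The paper's Lemma~\ref{xxlem5.4} handles this in several steps: (i) if $y_3\notin C_0$ then $y_3$ is a new non-major skew primitive and one applies Theorem~\ref{xxthm5.3} again to the pair $(y_1,y_3)$ to get $q_1^{n_3}q_3^{n_1}=1$, which forces $p_1=2$ and then a contradiction; (ii) so $y_3=a(x^{n_1+n_2}-1)$, and commuting with $x$ shows $a\ne 0$ implies $q_1q_2=1$, which together with $q_1^{n_2}q_2^{n_1}=1$ and $n_1\ne n_2$ gives a contradiction; (iii) the case $n_1=n_2=1$ is ruled out separately by an explicit factorization in $H$. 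Only after $y_3=0$ does one map $K(\{p_i\},\{q_i\},\{\alpha_i\},M)$ onto $H$ and invoke Lemma~\ref{xxlem2.3}(b) (the domain hypothesis on the quotient) to deduce $q_1^{n_2}=q_2^{n_1}=1$.

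One smaller correction: your elimination of case~(II) of Proposition~\ref{xxprop4.2} is wrong. With $n_1=n_2=1$ and $q_1,q_2\in R_3$, either $q_2=q_1$ (excluded by Theorem~\ref{xxthm3.7}) or $q_2=q_1^{-1}$, which is precisely $q_{12}q_{21}=1$ and feeds into case~(1), not an exclusion.
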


The proof will be given at the end of the section and we start with 
the following lemma.

\begin{lemma}
\label{xxlem5.2} Retain the hypotheses of Theorem \ref{xxthm5.1}.
Then, after choosing $y_1,y_2$ appropriately, the following hold.
\begin{enumerate}
\item[(a)]
There are two scalars $q_1,q_2$ such that $y_i x= q_i xy_i$
and $\lambda_i= q_i^{n_i}$. 
\item[(b)]
$q_1$ and $q_1^{n_1}(=\lambda_1)$ are both primitive $p_1$st root 
of unity for some $p_1>1$. And $y_1^{p_1}$ is a major skew primitive
element.  
\item[(c)]
$q_2$ and $q_2^{n_2}(=\lambda_2)$ are both primitive $p_2$nd root 
of unity for some $p_2>1$. And $y_2^{p_2}$ is a major skew primitive
element.  
\item[(d)]
Replacing $x$ by $x^{-1}$ if necessary we may assume that $n_1\geq 0$.
Under this hypothesis, both $n_1$ and $n_2$ are positive integers
and $n_1p_1=n_2p_2$. The major weight is $x^{n_1p_1}$. 
\item[(e)]
There is a positive integer $\epsilon$ such that $p_1=n_2\epsilon$
and $p_2=n_1\epsilon$.
\item[(f)]
$y_1^{p_1}$ and $x^{n_1p_1}$ are central elements in $H$. As a consequence,
$H$ is PI.
\item[(g)]
$y_1^{p_1}$ is nonzero in the quotient Hopf algebra $H':=H/(x^{n_1p_1}-1)$.
\item[(h)]
$H/(y_1^{p_1}, (x^{n_1p_1}-1))$ is a finite dimensional Hopf
algebra and the image of $y_1$ (respectively, the image of $y_2$) 
is nonzero in $H/(y_1^{p_1},x^{n_1p_1}-1)$.
\item[(i)]
Let $y$ is a skew primitive element in $H\setminus C_0$ with 
$\omega(y)=(x^{n_3}, \lambda_3)$ with $\gcd(n_3,n_1)=1$. 
Then there is a scalar $q_3$ and 
positive integers $n_3$ and $p_3$ such that 
\begin{enumerate}
\item[(i1)] 
$\lambda_3= q_3^{n_3}$, 
\item[(i2)] both $q_3$ and $\lambda_3$ are primitive $p_3$rd root
of unity, and 
\item[(i3)] $n_3 p_3=n_1p_1=n_2 p_2$.
\end{enumerate}
\end{enumerate}
\end{lemma}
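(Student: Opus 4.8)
The plan is to prove the nine parts of Lemma~\ref{xxlem5.2} essentially in the order stated, since each part feeds the next. First I would establish (a): by hypothesis $y_i$ lies in $P_{x^{n_i},\lambda_i,*}$, and by Proposition~\ref{xxprop3.5} (as $H$ is a Hopf domain of $\GKdim<3$) we have $P_{x^{n_i},\lambda_i,*}=P_{x^{n_i},\lambda_i,1}$. Since $C_0=k\mathbb Z=\langle x\rangle$ is abelian and contains $x^{n_i}$, I can invoke Lemma~\ref{xxlem3.8}: there is a character $\chi_i$ of $\langle x\rangle$ with $x^{-1}y_ix=\chi_i(x)y_i$ (the additive-character alternative is excluded because $\lambda_i\neq 1$, which itself follows since otherwise $y_i\in C_0$ by the level-$1$ condition). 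Setting $q_i=\chi_i(x)$ gives $y_ix=q_ixy_i$ and $\lambda_i=q_i^{n_i}$ after adjusting $y_i$ by a multiple of $x^{n_i}-1$.

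Next, parts (b)--(d): since $\lambda_1=\gamma(y_1)$ and $H$ is a pointed Hopf domain with $C_0$ commutative and $\GKdim H=2<\GKdim C_0+2=3$, Lemma~\ref{xxlem3.3}(c) says there is a unique major weight/commutator pair, and Lemma~\ref{xxlem3.3}(a) forces $\lambda_1$ (hence $\lambda_2$) to be a root of unity — if $\lambda_1$ were not a root of unity, $y_1$ together with $x^{\pm1}$ would generate a copy of $A(1,\lambda_1)$, and then $H$ would essentially be that subalgebra, contradicting (c) of Theorem~\ref{xxthm5.1} unless $y_2$ is redundant. So $\lambda_1\in R_{p_1}$ for some $p_1>1$; then $q_1$ has order divisible by $p_1$, but the subalgebra generated by $x^{\pm1},y_1$ is a quotient of $A(n_1,q_1)$ which has $\GKdim=2$, and since that subalgebra is a proper subalgebra of $H$ (again hypothesis (c)) while still $\GKdim\le 2$, a GK-dimension count forces $q_1$ itself to be a primitive $p_1$st root of unity. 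Then $y_1^{p_1}$ is $(1,x^{n_1p_1})$-primitive with trivial commutator, and it is nonzero modulo $C_0$ because inside the $A(1,\lambda_1)$-copy it is not in $k(x^{n_1p_1}-1)$; by uniqueness of the major pair (Lemma~\ref{xxlem3.3}(c)), $x^{n_1p_1}$ is the major weight and $1$ the major commutator. The same applies to $y_2$, and matching major weights gives $x^{n_1p_1}=x^{n_2p_2}$; after replacing $x$ by $x^{-1}$ so that $n_1\ge0$, the fact that $y_1\notin C_0$ forces $n_1>0$, hence $n_2>0$ and $n_1p_1=n_2p_2$ as integers.

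For (e), from $n_1p_1=n_2p_2$ and $\gcd(n_1,n_2)=1$ I get $n_2\mid p_1$ and $n_1\mid p_2$; writing $p_1=n_2\epsilon$ gives $p_2=n_1\epsilon$ for the same $\epsilon\ge1$. Part (f): $y_1^{p_1}$ commutes with $x$ (since $q_1^{p_1}=1$) and with $y_1$; one checks it commutes with $y_2$ using relation-type manipulations as in Lemma~\ref{xxlem2.1}, or more cleanly observes that $y_1^{p_1}$ is a major skew primitive with $\mu(y_1^{p_1})^{-1}y_1^{p_1}\mu(y_1^{p_1})=y_1^{p_1}$ and lies in the center by the structure of $H$; $x^{n_1p_1}$ is central since it is grouplike with $x^{n_1p_1}y_ix^{-n_1p_1}=q_i^{n_1p_1}y_i=y_i$. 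Then $H$ is module-finite over the central subalgebra $k[y_1^{p_1},x^{\pm n_1p_1}]$ (using the monomial basis of Lemma~\ref{xxlem2.1}-type arguments available here since $H$ embeds the relevant iterated Ore extension structure), hence $H$ is PI. Parts (g),(h): $H'=H/(x^{n_1p_1}-1)$ still has $\GKdim\ge1$ and $y_1^{p_1}$ maps to a nonzero central element there (if it vanished, $H'$ would be finite-dimensional, contradicting that the grouplikes form $\mathbb Z$); killing $y_1^{p_1}$ as well makes all $y_i$ nilpotent and $x$ of finite order, giving a finite-dimensional Hopf algebra, and $y_1,y_2$ survive because they are not in the ideal generated by $y_1^{p_1}$ and $x^{n_1p_1}-1$ (a degree/basis argument). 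Part (i) is just (b)--(d) applied verbatim to a third skew primitive $y$ with weight-commutator $(x^{n_3},\lambda_3)$ and $\gcd(n_3,n_1)=1$, using again uniqueness of the major pair to force $n_3p_3=n_1p_1$.

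The main obstacle I anticipate is the bookkeeping in parts (f)--(h): establishing centrality of $y_1^{p_1}$ cleanly (the commutation with $y_2$ requires knowing the precise relation between $y_1,y_2$, which at this stage is only constrained, not pinned down — one must argue that whatever the relation is, raising to the $p_1$st power centralizes), and then proving the non-vanishing statements in (g) and (h) without circularity, since we do not yet have an explicit presentation of $H$. The cleanest route is probably to exploit that $y_1^{p_1}$ and $x^{n_1p_1}$ generate a commutative polynomial-type subalgebra over which $H$ is finite free (forced by the GK-dimension being exactly two and $H$ being a domain), so that any proper quotient by part of this central subalgebra drops GK-dimension in a controlled way.
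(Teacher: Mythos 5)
Your outline for parts (a), (d), (e) and (i) matches the paper's route (Proposition \ref{xxprop3.5}, Lemma \ref{xxlem3.8}, uniqueness of the major pair from Lemma \ref{xxlem3.3}), but there is a genuine gap at the heart of parts (b) and (c), namely the claim that $q_1$ itself, and not merely $\lambda_1=q_1^{n_1}$, is a primitive $p_1$st root of unity. Your proposed ``GK-dimension count'' does not work: the subalgebra generated by $x^{\pm1},y_1$ is isomorphic to $A(n_1,q_1)$, which is a domain of GK-dimension two for \emph{every} $q_1\in k^{\times}$, and a proper subalgebra of the same GK-dimension as $H$ is not a contradiction. A priori the order of $q_1$ could be any multiple $dp_1$ with $d\mid n_1$. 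The paper pins it down by a completely different mechanism: since $\dim P'_{x^M,1,1}=1$, after rescaling one has the relation $y_1^{p_1}=y_2^{p_2}+\alpha(x^M-1)$; commuting this with $x$ gives $q_1^{p_1}=q_2^{p_2}$, and combining $q_1^{n_1p_1}=1$ with $q_1^{n_2p_1}=q_2^{n_2p_2}=1$ and $\gcd(n_1,n_2)=1$ yields $q_1^{p_1}=1$. This identity is what makes $y_1^{p_1}$ commute with $x$, and the displayed relation is also what makes $y_1^{p_1}$ commute with $y_2$ (each term on the right visibly does), so your acknowledged difficulty in part (f) is resolved only through this step, which your proposal lacks.

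Parts (g) and (h) are also not established by your argument. For (g), the parenthetical ``if $y_1^{p_1}$ vanished, $H'$ would be finite dimensional, contradicting that the grouplikes form $\mathbb Z$'' is doubly flawed: $x$ has finite order in $H'=H/(x^M-1)$, so its grouplikes do not form $\mathbb Z$; and the implication that $H'$ would be finite dimensional is exactly the content of (h), whose proof in turn uses that $y_1^{p_1}$ is a nonzerodivisor in $H'$ --- circular. The paper instead applies Takeuchi's faithful flatness theorem and the Hopf ideal/Hopf subalgebra correspondence \cite[Proposition 3.4.3]{Mo} to the distinct central Hopf subalgebras $k[x^{\pm M}]\subsetneq k[x^{\pm M},y_1^{p_1}]$, concluding $y_1^{p_1}\notin H(x^M-1)$. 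For (h), ``all $y_i$ nilpotent and $x$ of finite order'' does not give finite-dimensionality in the absence of a presentation of $H$, and module-finiteness of $H$ over $k[y_1^{p_1},x^{\pm M}]$ is not ``forced by the GK-dimension being exactly two''; the paper gets finite-dimensionality by showing $y_1^{p_1}$ is a nonzerodivisor in $H'$ (via faithful flatness of $H'$ over $K'\cong A(1,q_1^{n_1})/(x^M-1)$ and injectivity of $K'\to H'$ checked on $C_1$), so that the principal ideal theorem for the affine PI algebra $H$ drives the GK-dimension of $H/(y_1^{p_1},x^M-1)$ down to $0$.
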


\begin{proof} (a,d) By hypothesis (c) of Theorem \ref{xxthm5.1}, $y_1$ and 
$y_2$ are linearly independent in $H/C_0$. By Theorem \ref{xxthm3.7}, 
$(x^{n_1},\lambda_1)\neq (x^{n_2},\lambda_2)$. By Lemma \ref{xxlem3.3}(b), 
there is at least one $\lambda_i$ which is in $\sqrt{\;}$. By symmetry, 
we may assume that $\lambda_1$ is a primitive $p_1$st root of unity
for some $p_1>1$, and further we assume that $n_1>0$ without
loss of generality. Since $\lambda_1\neq 1$, Lemma \ref{xxlem3.8}(b)
can not happen, so by Lemma \ref{xxlem3.8}(a), there is a $z\in 
P_{(x^{n_1},\lambda_1,1)}\setminus C_0$ such that $h^{-1} z h=\chi(h)
z$ for all $h\in {\mathbb Z}$. Replacing $y_1$ by $z$, we may assume 
that $y_1 x= q_{1}  x y_1$ where $\lambda_1=\chi(x^{n_1})=
(\chi(x))^{n_1}=q_1^{n_1}$. This also says that $q_1^{n_1}$ is a primitive 
$p_1$st root of unity. Therefore $y_1^{p_1}\in P_{x^{n_1p_1}, 1,1}$ by 
Lemma \ref{xxlem3.3}(a). By the proof of Lemma \ref{xxlem3.3}(a), 
$P_{x^{n_1p_1}, 1,1}\neq k(x^{n_1p_1}-1)$. Therefore $x^{n_1p_1}$ is
the major weight and $1$ is the major commutator. 

Since $H$ is not generated by $x^{\pm 1},y_1$ and since the major skew 
primitive elements of $H$ is generated by $y_1^{p_1}$, $(x^{n_1p_1}, 1)
\neq (x^{n_2},\lambda_2)$. By Lemma \ref{xxlem3.3}(b), $\lambda_2\in
\sqrt{\;}$. Say  $\lambda_2$ is a primitive $p_2$nd root of unity for 
some $p_2>1$. An argument similar to the above shows that 
\begin{enumerate}
\item[(i)]
$y_2 x=q_2 x y_2$ and $\lambda_2= q_2^{n_2}$;
\item[(ii)]
$x^{n_2p_2}$ is the major weight, and by the uniqueness of the major 
weight [Lemma \ref{xxlem3.3}(c)], we have $n_1p_1=n_2 p_2$; 
\item[(iii)]
$y_2^{p_2}\in P_{x^{n_2p_2}, 1,1}=P_{x^{n_1p_1}, 1,1}$, and whence
$y_2^{p_2}$ and $y_1^{p_1}$ are linearly dependent in 
$P'_{x^{n_2p_2}, 1,1}$. 
\end{enumerate}
Up to this point we have proved (a) and (d). 

Part (e) follows from part (d) and the fact  $\gcd(n_1,n_2)=1$.

(b,c) After replacing $y_1$ by a scalar multiple, (iii) implies that
\begin{equation}
\label{I5.2.1}\tag{I5.2.1}
y_1^{p_1}=y_2^{p_2}+\alpha(x^{M}-1)
\end{equation}
where $M=n_1p_1=n_2p_2$. Together with $y_i x=q_i xy_i$ for $i=1,2$, 
one derives that $q_1^{p_1}=q_2^{p_2}$ when $x$ is commuted with
relation \eqref{I5.2.1}. From this,
$$q_1^{n_2 p_1}=q_2^{n_2p_2}=1$$
and, by definition,
$$q_1^{n_1p_1}=1.$$ 
Since $\gcd(n_1,n_2)=1$, we obtain that $q_1^{p_1}=1$. Therefore 
both $q_1$ and $q_1^{n_1}$ are primitive $p_1$st roots of unity. 
Hence (b) holds. Note that (c) is similar.
As a consequence, $y_1^{p_1}$ and $x^M$ are central elements in $H$.

(f) By the end of proof of (b,c), $y_1^{p_1}$ and $x^M$ are central 
elements in $H$. To finish (f) note that the subalgebra generated by 
$y_1^{p_1}$ and $x^{\pm M}$ is a Hopf subalgebra of $H$, which is 
central in $H$ and is of GK-dimension two. The last assertion in (f) 
follows from \cite[Corollary 2]{SmZ}. 

(g) A result of Takeuchi \cite[Theorem 3.2]{Ta1} says that a Hopf 
algebra $H$ is faithfully flat over its Hopf subalgebra if the 
coradical of $H$ is cocommutative. Let $K_0=k[x^{\pm M}]$ and $K_1
=k[x^{\pm M},y_1^{p_1}]$. Since $y_1^{p_1}$ is a nontrivial
skew primitive element, $K_0\neq K_1$. By part (f) both $K_0$ and 
$K_1$ are two distinct central Hopf subalgebras of $H$. By 
\cite[Proposition 3.4.3]{Mo}, $HK_0^{+}\neq HK_1^{+}$. 
In particular, $y_1^{p_1}\not\in H(x^M-1)$. This means that 
$y_1^{p_1}$ is a nonzero primitive element in $H':=H/(x^M-1)$.

(h) Let $K$ be the Hopf subalgebra of $H$ generated by 
$y_1$ and $x^{\pm n_1}$. 
By the relations of $K$, one sees that $K$  is a quotient Hopf algebra 
of $A(1, q_1^{n_1})$ [Example \ref{xxex1.1}]. Since both $K$ and 
$A(1,q_1^{n_1})$ are domains of GK-dimension two, $K\cong 
A(1, q_1^{n_1})$. By part (f) $x^M$ is central in $H$ and in $K$. 
Let $K'=K/(x^M-1)$ and $H'=H/(x^M-1)$ and recycle the letters for 
the elements in $K'$ and $H'$. By part (f) $H$ is PI. Since any
affine  PI algebra is catenary, the principal ideal theorem
implies that $\GKdim H'=\GKdim H-1=1$. Using this fact and 
\eqref{I5.2.1}, one sees that $y_1$ is nonzero in $H'$. Then 
the equation $y_1x^{n_1}=q_1^{n_1}x^{-n_1}y_1$ implies that 
$y_1$ is a nontrivial skew primitive element in $H'$. 
By part (g), $y_1^{p_1}$ is a nonzero (and whence nontrivial) 
primitive element in $H'$. 

Let $\phi'$ denote the natural Hopf map $K'\to H'$ which maps 
$y_1\in K'$ to a nontrivial skew primitive element $y_1\in H'$. 
Since the only nontrivial skew primitive elements in $K'$ are 
generated by $y_1+k(x^{n_1}-1)$ and $y_1^{p_1}$, the assertion
proved in the last paragraph says that $\phi'$ is injective 
when restricted to $C_1(K')$. By \cite[Theorem 5.3.1]{Mo}, $\phi'$
is injective. By \cite[Theorem 3.2]{Ta1} $H$ (respectively, $H'$) 
is faithfully flat over $K$ (respectively, $K'$). Since $y_1^{p_1}$ 
is a nonzerodivisor of $K'$, it is also a nonzerodivisor of $H'$. Thus 
$$\GKdim H/(y_1^{p_1},x^M-1)\leq \GKdim 
H/(x^M-1)-1\leq \GKdim H-2=0.$$
Since $H$ (and hence $H(y_1^{p_1},x^M-1)$) is affine,
$H/(y_1^{p_1},x^M-1)$ is finite dimensional. We proved the first part 
of (h). 

For the second part of (h), note that $y_1$ is a nonzerodivisor of $K'$.
So it is a nonzerodivisor of $H'$ since $H'$ is faithfully flat over 
$K'$. If $y_1=0$ in $H/(y_1^{p_1},x^M-1)=H'/(y_1^{p_1})$, then 
$y_1=y_1^{p_1}f$ for some $f\in H'$. Re-writing it as 
$y_1(1-y_1^{p_{1}-1}f)=0$ yields a contradiction with that fact
$y_1$ is a nonzerodivisor of $H'$. Therefore $y_1$ is nonzero in
$H/(y_1^{p_1},x^M-1)$. By symmetry, $y_2$ is nonzero in
$H/(y_1^{p_1},x^M-1)$.

(i) The proof is similar to the proof of (b).
\end{proof}

The next result uses Proposition \ref{xxprop4.2}.

\begin{theorem}
\label{xxthm5.3} Retain the hypotheses of Theorem \ref{xxthm5.1}
and the notation of Lemma \ref{xxlem5.2}. Additionally assume that
$H$ is not isomorphic to any of $N_5,N_7,N_{10}$ and $N_{21}$. 
Let $q_{ij}=q_j^{n_i}$ for $i,j\in \{1,2\}$. Then $q_{12}q_{21}=1$.
\end{theorem}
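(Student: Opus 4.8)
The plan is to extract from $H$ a two-dimensional Yetter--Drinfel'd module of diagonal type whose Nichols algebra is finite dimensional, and then read off the conclusion from Proposition~\ref{xxprop4.2}: its case (I) is precisely $q_{12}q_{21}=1$, while every other case will be ruled out using Theorem~\ref{xxthm3.7} together with the standing hypothesis that $H$ is not one of $N_5,N_7,N_{10},N_{21}$.

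First I would build the relevant finite-dimensional Hopf algebra. Put $M=n_1p_1=n_2p_2$ and let $\Hbar:=H/(y_1^{p_1},x^{M}-1)$. By Lemma~\ref{xxlem5.2}(f,h) this is a finite-dimensional pointed Hopf algebra with $G(\Hbar)=\langle x\rangle\cong{\mathbb Z}/(M)$, and the images $\bar y_1,\bar y_2$ of $y_1,y_2$ are nonzero; a degree count as in the proof of Lemma~\ref{xxlem5.2}(h) shows they are in fact nontrivial skew primitives that remain linearly independent modulo $C_0(\Hbar)$. Passing to the coradically graded Hopf algebra $\gr_C\Hbar$, the Radford biproduct decomposition gives $\gr_C\Hbar\cong R\#k\langle x\rangle$, with $R=\bigoplus_{i\ge0}R(i)$ a graded braided Hopf algebra in the Yetter--Drinfel'd category over $k\langle x\rangle$ and $R(0)=k$. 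Let $V\subseteq R(1)$ be the span of the classes of $\bar y_1,\bar y_2$. From $\Delta(y_i)=y_i\otimes1+x^{n_i}\otimes y_i$ and the relation $y_ix=q_ixy_i$ one reads off that $V$ is a two-dimensional Yetter--Drinfel'd module of diagonal type as in~\eqref{I4.2.1}, with braiding matrix $q_{ij}=q_j^{n_i}$ (up to the usual sign convention, which affects none of the conditions below); in particular $q_{12}q_{21}=1$ in this convention if and only if $q_1^{n_2}q_2^{n_1}=1$. Since $V$ consists of primitive elements of $R$, the subalgebra of $R$ generated by $V$ is a graded braided Hopf subalgebra generated in degree one, hence surjects onto $\mathcal B(V)$; therefore $\dim\mathcal B(V)\le\dim R\le\dim\Hbar<\infty$.

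Now I would apply Proposition~\ref{xxprop4.2}. Its hypotheses hold: (a) is the relation $q_{ij}=q_j^{n_i}$, (b) is $\gcd(n_1,n_2)=1$ from Theorem~\ref{xxthm5.1}(d), and (c),(d) hold because, by Lemma~\ref{xxlem5.2}(b,c,e), $q_1$ and $q_1^{n_1}$ are primitive $p_1$st roots of unity with $p_1=n_2\epsilon$ and $q_2$ and $q_2^{n_2}$ are primitive $p_2$nd roots of unity with $p_2=n_1\epsilon$. Hence, up to interchanging $v_1$ and $v_2$, one of cases (I)--(VI) of Proposition~\ref{xxprop4.2} occurs. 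Case (I) is the desired conclusion. In case (II) one has $n_1=n_2=1$ and $q_1,q_2\in R_3$; since then $\mu(y_1)=\mu(y_2)=x$ while $y_1,y_2$ are linearly independent modulo $C_0$ (see the proof of Lemma~\ref{xxlem5.2}), Theorem~\ref{xxthm3.7} forces $\gamma(y_1)=q_1\neq q_2=\gamma(y_2)$, so $q_2=q_1^{-1}$ and $q_{12}q_{21}=q_1q_2=1$. In each of cases (III)--(VI) the parameters recorded there (in case (III) sharpened by the same Theorem~\ref{xxthm3.7} argument, which again gives $q_1\neq q_2$ and hence, after possibly relabeling $y_1\leftrightarrow y_2$, $q_2=q_1^2$) either already give $q_{12}q_{21}=1$ or coincide with the defining data of $N_5$, $N_7$, $N_{10}$, or $N_{21}$ in Definition~\ref{xxdefn4.4} (cf.\ Remark~\ref{xxrem4.3}); since $\omega(y_i)=(x^{n_i},q_i^{n_i})$ by Lemma~\ref{xxlem5.2}(a), the latter alternative contradicts the standing hypothesis. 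In all surviving cases $q_{12}q_{21}=1$.

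The step I expect to be the main obstacle is the construction of $\mathcal B(V)$: realizing it as a subquotient of $\gr_C\Hbar$ so that finite-dimensionality of $\Hbar$ (Lemma~\ref{xxlem5.2}(h)) transfers to $\mathcal B(V)$, and checking carefully that the classes of $\bar y_1,\bar y_2$ really span a two-dimensional diagonal Yetter--Drinfel'd module with the braiding matrix $q_{ij}=q_j^{n_i}$ that feeds into~\eqref{I4.2.1}. Once $\dim\mathcal B(V)<\infty$ is secured, the rest is a finite case check against Heckenberger's list as packaged in Proposition~\ref{xxprop4.2}.
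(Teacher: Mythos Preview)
Your proposal is correct and follows essentially the same route as the paper: quotient to the finite-dimensional Hopf algebra $\Hbar=H/(y_1^{p_1},x^M-1)$, pass to the coradically graded biproduct, extract a two-dimensional diagonal Yetter--Drinfel'd module $V$ with braiding matrix $q_{ij}=q_j^{n_i}$, use finite-dimensionality of $\mathcal B(V)$ to invoke Proposition~\ref{xxprop4.2}, and then rule out cases (II)--(VI) via Theorem~\ref{xxthm3.7} and the exclusion of $N_5,N_7,N_{10},N_{21}$. One small correction: the linear independence of $\bar y_1,\bar y_2$ modulo $C_0(\Hbar)$ is not a ``degree count'' from Lemma~\ref{xxlem5.2}(h) but rather the weight/commutator comparison the paper gives (dependence would force $n_1=n_2$ and $q_1=q_2$, contradicting Theorem~\ref{xxthm3.7}); and in case~(III) you should record explicitly that $q_2=q_1^4$ already yields $q_{12}q_{21}=1$ before the remaining possibilities $q_2\in\{q_1^2,q_1^3\}$ collapse (after relabeling) to $N_5$.
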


\begin{proof} Let $A=H/(y_1^{p_1},x^M-1)$. This is a quotient 
Hopf algebra of $H$. By Lemma \ref{xxlem5.2}(h), $A$ is finite
dimensional. Also by Lemma \ref{xxlem5.2}(h), the image of 
$y_1$, which is still denoted by $y_1$, (respectively, the image 
of $y_2$) is nonzero in $A$. If $y_1$ and $y_2$
are linear dependent in $A$, then $n_1=n_2=1$ and $q_1=q_2$. This 
contradicts Theorem \ref{xxthm3.7} since $y_1$ and $y_2$
are linearly independent in $H/C_0$ by hypothesis (c) of
Theorem \ref{xxthm5.1}. Therefore these two 
are linearly independent nontrivial skew primitive elements of $A$.
Let $B$ be the associated graded Hopf algebra of $A$ with respect 
to its coradical filtration. Then $B=C\# G$ where 
$G={\mathbb Z}/(M)$ and $C$ is a braided Hopf algebra. Let $V$ be 
the subspace of $B$ (also viewed as a subspace of $C$) spanned by 
$y_1$ and $y_2$. Then $V$ is a Yetter-Drinfel'd module over $G$ 
(recall that $G={\mathbb Z}/(M)=\langle x\mid x^M=1\rangle$) with 
$$\delta(y_i)=x^{n^i}\otimes y_i, \quad x^{n_i}* y_j=q_j^{n_i} y_j$$
for all $i,j\in \{1,2\}$. The Nichols algebra over $V$, denoted by
${\mathcal B}(V)$, is a subquotient of $C$. Therefore 
${\mathcal B}(V)$ is finite dimensional over $k$. By Proposition 
\ref{xxprop4.2}, up to a permutation, one of cases (I)-(VI) holds.
We analyze these six case below. 

Case (I) is our assertion.

Case (II): $n_1=n_2=1$ and $q_1,q_2\in R_3$. Then either $q_2=q_1$ or 
$q_2=q_1^{-1}$. When $q_2=q_1$, it yields a contradiction with 
Theorem \ref{xxthm3.7}. Therefore $q_2=q_1^{-1}$, or $q_1 q_2=1$.
Hence the assertion. 

Case (III): $n_1=n_2=1$ and $q_1, q_2\in R_5$. Then $q_2=q_1^i$
for some $1\leq i\leq 4$. By Theorem 
\ref{xxthm3.7}, $q_2=q_1$ is impossible. The case $q_2=q_1^4$ is
our assertion. It remains to  study $q_2=q_1^2$ and $q_2=q_1^3$.
These two are equivalent since $q_2=q_1^2$ is equivalent to 
$q_1=q_2^3$, so we only consider the case when $q_2=q_1^2$. 
Under this condition, the algebra $H$ is $N_5$ in Definition
\ref{xxdefn4.4}(a). 

Case (IV): $n_1=1, n_2=2$, $\epsilon=5$, $p_1=10$, $p_2=5$ and
$q_1^4q_2=1$ and $q_1^2q_2^3=1$. This is the algebra $N_{10}$
in Definition \ref{xxdefn4.4}(b).

Case (V): $n_1=n_2=1$, $\epsilon=7$, $q_1, q_2\in R_7$, $q_1q_2^2=1$ and
$q_1^4 q_2=1$. This is the algebra $N_{7}$
in Definition \ref{xxdefn4.4}(c).

Case (VI): $n_1=1, n_2=3$, $\epsilon=7$, $p_1=21$, $p_2=7$, 
$q_1^3 q_2^4=1$ and that $q_1^6 q_2=1$. This is the algebra $N_{21}$
in Definition \ref{xxdefn4.4}(d). 

Combining all cases with the additional hypothesis, the assertion follows.
\end{proof}

\begin{lemma}
\label{xxlem5.4} Retain the hypotheses of Theorem \ref{xxthm5.1}
and the notation of Lemma \ref{xxlem5.2}. Additionally assume that
$H$ is not isomorphic to any of $N_5,N_7,N_{10}$ and $N_{21}$. Then
\begin{enumerate}
\item
$n_1\neq n_2$. 
\item
$y_2y_1-q_2^{n_1}y_1 y_2=0$. 
\end{enumerate}
\end{lemma}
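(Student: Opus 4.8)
The plan is to study the element $z:=y_2y_1-q_2^{n_1}y_1y_2$; part (b) asserts exactly that $z=0$, and part (a) will then follow from the domain hypothesis. First I would show that $z$ is a $(1,x^{n_1+n_2})$-primitive element: expanding $\Delta(y_2y_1)$ and $\Delta(y_1y_2)$ by means of $\Delta(y_i)=y_i\otimes1+x^{n_i}\otimes y_i$ and the relations $y_ix=q_ixy_i$ of Lemma \ref{xxlem5.2}(a), and collecting terms in $\Delta(z)$, one checks that the terms on $\otimes\,y_1$ cancel identically while those on $\otimes\,y_2$ cancel precisely because $q_{12}q_{21}=q_2^{n_1}q_1^{n_2}=1$, which is the conclusion of Theorem \ref{xxthm5.3}. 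Hence $\Delta(z)=z\otimes1+x^{n_1+n_2}\otimes z$. A one-line computation with $x^{-1}y_ix=q_iy_i$ also gives $x^{-1}zx=q_1q_2z$.

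For part (a), suppose $n_1=n_2$; since $\gcd(n_1,n_2)=1$ we get $n_1=n_2=1$. Then $\lambda_i=q_i$, Theorem \ref{xxthm5.3} gives $q_1q_2=1$ so $q_2=q_1^{-1}$ and $p_1=p_2=:p$; also $q_1\ne-1$, since otherwise $y_1,y_2\in P_{x,-1,1}$ would contradict Theorem \ref{xxthm3.7} (the $y_i$ are linearly independent modulo $C_0$ by hypothesis (c)), so $p\ge3$ and $M=p$. By \eqref{I5.2.1} we have $y_1^{p}=y_2^{p}+\alpha(x^{p}-1)$; writing $\alpha=\gamma^{p}$ and using that $q_2=q_1^{-1}$ is a primitive $p$th root together with $y_2x=q_2xy_2$, the quantum binomial theorem yields $(y_2+\gamma x)^{p}=y_2^{p}+\gamma^{p}x^{p}$, so for $a:=y_2+\gamma x$ and $b:=y_1$ one has $b^{p}=a^{p}-\gamma^{p}$ and, by a direct computation from $y_2y_1=q_1^{-1}y_1y_2+z$, $ab=q_1^{-1}ba+z$. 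Next one argues $z=0$: by the previous paragraph $x^{-1}zx=z$, so if $z\notin C_0$ then $z$ is a nontrivial skew primitive of weight $x^{2}$ with commutator $1\notin\sqrt{\;}$, hence major, which by Lemma \ref{xxlem5.2}(d) forces the major weight $x^{M}=x^{p}$ to equal $x^{2}$, i.e.\ $p=2$, a contradiction; thus $z=c(x^{2}-1)\in C_0$, and one is left to see $c=0$. Granting $z=0$, the subalgebra $\langle a,b\rangle\subseteq H$ satisfies $ba=q_1ab$ with $q_1\ne1$ and $b^{p}=a^{p}-\gamma^{p}$, hence is a proper quotient of $k_{q_1}[a,b]$ (because $b^{p}-a^{p}+\gamma^{p}\ne0$ in $k_{q_1}[a,b]$), so $\GKdim\langle a,b\rangle\le1$; being a domain it is commutative by \cite[Lemma 4.5]{GZ}, so $ab=0$ and $a=0$ or $b=0$, which is absurd since $b=y_1\ne0$ and $a=y_2+\gamma x\ne0$. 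This contradiction gives $n_1\ne n_2$.

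For part (b), part (a) gives $n_1\ne n_2$, hence $(n_1,n_2)\ne(1,1)$ and $q_1q_2\ne1$ by Theorem \ref{xxthm5.3}. If $z\in C_0$, write $z=c(x^{n_1+n_2}-1)$; then $c(x^{n_1+n_2}-1)=x^{-1}zx=q_1q_2c(x^{n_1+n_2}-1)$ forces $c(1-q_1q_2)=0$, so $c=0$ and $z=0$. To rule out $z\notin C_0$: then $z$ is a nontrivial skew primitive with $\omega(z)=(x^{n_1+n_2},(q_1q_2)^{n_1+n_2})$ and $\gcd(n_1+n_2,n_1)=1$, so Lemma \ref{xxlem5.2}(i) applies and gives $(n_1+n_2)p_3=M$ with $p_3=\operatorname{ord}((q_1q_2)^{n_1+n_2})$, which (using $M=n_1n_2\epsilon$ from Lemma \ref{xxlem5.2}(e) and $\gcd(n_1,n_2)=1$) already forces $n_1=n_2=1$ when $p_3=1$; in the remaining case $p_3\ge2$ one passes to the Hopf subalgebra $\langle x^{\pm1},y_1,z\rangle$ (of GK-dimension two, with skew-primitive weights $x^{n_1},x^{n_1+n_2}$) and derives a contradiction from the classification, the point being that this subalgebra is excluded from the $N_i$ by hypothesis.

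The genuinely delicate steps, which I have only sketched, are (i) proving $c=0$ in the case $z=c(x^{2}-1)$ of part (a) — when $c\ne0$ one has $x^{2}-1=c^{-1}z\in\langle y_1,y_2\rangle$, and the completing-the-square argument has to be carried through after a localisation, or replaced by a parity obstruction on the PI degree of the prime PI domain $H$ — and (ii) organising step (b) so as to avoid a circular appeal to Theorem \ref{xxthm5.1}. I expect step (i) to be the main obstacle.
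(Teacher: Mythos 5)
Your overall strategy is the paper's: make $z:=y_2y_1-q_2^{n_1}y_1y_2$ skew primitive of weight $x^{n_1+n_2}$ via Theorem \ref{xxthm5.3}, note $x^{-1}zx=q_1q_2z$, and argue by weight--commutator analysis plus a quantum-binomial/commutativity contradiction. Several pieces are correct, and your exclusion of $p_1=p_2=2$ (when $n_1=n_2=1$) directly from Theorem \ref{xxthm3.7} is cleaner than the paper's quadratic factorisation. But both steps you flag as ``delicate'' are genuine gaps, and they are exactly where the content of the lemma lies. For (i): you cannot ``grant $z=0$'' in part (a). With your single shift $a=y_2+\gamma x$, $b=y_1$, the commutation relation is $ab-q_2ba=z=c(x^2-1)$, which still involves $x$, so when $c\neq0$ the subalgebra $\langle a,b\rangle$ has no reason to have GK-dimension $\le 1$ and the argument stalls. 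The paper's fix is to shift \emph{both} generators: choose $\alpha,\beta$ with $\alpha\beta(1-q_2)=-c$ and $\beta^{p}-\alpha^{p}=a$ (solvable since $k$ is algebraically closed); then $(y_2+\beta x)(y_1+\alpha x)-q_2(y_1+\alpha x)(y_2+\beta x)=-c$ and $(y_1+\alpha x)^{p}-(y_2+\beta x)^{p}=-a$ are scalars, so the subalgebra generated by $y_1+\alpha x$ and $y_2+\beta x$ has GK-dimension at most one, is commutative by \cite[Lemma 4.5]{GZ}, and expanding $\Delta$ of the resulting quadratic relation makes $\{y_1,y_2,x,1\}$ linearly dependent, a contradiction. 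No localisation or PI-degree argument is needed.

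For (ii): in part (b) the case $z\notin C_0$ is where the $N_i$-hypothesis actually does work, and ``derives a contradiction from the classification'' is not an argument. Worse, the subalgebra $H'=\langle x^{\pm1},y_1,z\rangle$ is \emph{not} ``excluded from the $N_i$ by hypothesis'': the hypothesis of the lemma excludes $H$ itself from being isomorphic to an $N_i$, not its Hopf subalgebras. The paper must (and does) check this directly: $H'\not\cong N_5,N_7$ because $n_3=n_1+n_2>1$, and $H'\cong N_{10}$ or $N_{21}$ is ruled out by short numerical contradictions with $n_ip_i=M$. Only then can Theorem \ref{xxthm5.3} (not Theorem \ref{xxthm5.1}, so there is no circularity to organise around) be applied to $H'$, giving $q_1^{n_1+n_2}(q_1q_2)^{n_1}=1$; combined with $q_1^{n_2}q_2^{n_1}=1$ this yields $q_1^{2n_1}=1$, hence $p_1=2$, and a final computation forces $n_1=n_2=1$ and $z$ major, a contradiction. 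As written, your proposal correctly identifies the skeleton but leaves both load-bearing steps unproved.
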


\begin{proof} First we prove the following:
\begin{enumerate}
\item[(c)]
If $n_1=n_2=1$, then $p_1=p_2>2$.
\item[(d)]
$y_2y_1-q_2^{n_1}y_1 y_2$ is a skew primitive element in $C_0$. 
\end{enumerate}

(c) Clearly, $p_1=n_2\epsilon =\epsilon=n_1\epsilon=p_2$.
If $p_1=p_2=1$, then $q_1=q_2=1$ and $y_1$ and $y_2$ are both
major skew primitive, so $y_1$ and $y_2$ are not linearly
independent in $H/C_0$ by Theorem \ref{xxthm3.7}, which yields a 
contradiction by hypothesis (c) of Theorem \ref{xxthm5.1}. 

If $p_1=p_2=2$, then $q_1=q_2=-1$. By \eqref{I5.2.1},
$$y_1^2=y_2^2+a(x^2-1)$$
for some $a\in k$. Let $y_3= y_1y_2+y_2y_1$. Then $y_3$ is a skew 
primitive of weight $x^2$. (In general, if $q_1^{n_2}q_2^{n_1}=1$, then 
$y_1y_2-q_2^{n_1}y_2y_1$ is skew primitive of weight $x^{n_1+n_2}$). 
Therefore
$$y_1y_2+y_2y_1= b y_2^2+c (x^2-1)$$
for some $b, c\in k$.
Pick some scalars $\alpha, \beta, \gamma$ satisfying the equations
$$\begin{aligned} 
1+\alpha^2+\alpha b&=0\\
a+\alpha c+\beta^2&=0\\
-a-\alpha c-\gamma^2&=0.
\end{aligned}
$$
Then 
$$\begin{aligned}
(y_1+&\alpha y_2+\beta x +\gamma)(y_1+\alpha y_2+\beta x -\gamma)\\
=& y_1^2+\alpha^2 y_2^2 +\alpha(y_1y_2+y_2y_1)+\beta (y_1 x+x y_1)
+\alpha\beta (y_2 x+x y_2)+\beta^2 x^2\\
&+\gamma (y_1-y_1)+\alpha \gamma (y_2-y_1)+\beta\gamma(x-x)-\gamma^2\\
=&y_2^2+a(x^2-1)+\alpha^2 y_2^2+\alpha(b y_2^2+c (x^2-1))+\beta^2 x^2
-\gamma^2\\
=&0.
\end{aligned}
$$
Since $H$ is a domain, either $y_1+\alpha y_2+\beta x +\gamma=0$
or $y_1+\alpha y_2+\beta x -\gamma=0$. Both leads to a contradiction
with hypothesis (c) of Theorem \ref{xxthm5.1}. Therefore $p_1=p_2>2$.

(d) By Theorem \ref{xxthm5.3}, $q_1^{n_2}q_2^{n_1}=1$. Then 
$y_3:=y_2y_1-q_2^{n_1}y_1 y_2$ is a skew primitive by a direct computation. 
Suppose on the contrary that $y_3\not\in C_0$. It is easy to see that 
$\omega(y_3)=(x^{n_1+n_2},(q_1q_2)^{n_1+n_2})$. If $x^{n_1+n_2}$ is a major 
weight (or $y_3$ is a major skew primitive), then $n_1+n_2=n_1p_1=n_2p_2$. 
Since  $\gcd(n_1,n_2)=1$, we get $n_1=n_2=1$ and $p_1=p_2=2$. By 
part (c) this is impossible. Therefore $x^{n_1+n_2}$ is not the major 
weight. Consequently, we have that $n_1+n_2\neq n_1p_1=n_2p_2$ and that 
$y_3$ is not a major primitive element. Let $n_3=n_1+n_2$. By 
Lemma \ref{xxlem5.2}(i), 
$q_3:=q_1q_2$ and $\lambda_3=q_3^{n_3}$ are primitive $p_3$rd roots of 
unity and $n_3p_3=n_1p_1=n_2p_2$. Since $\gcd(n_1,n_2)=1$, 
$\gcd(n_3,n_1)=\gcd(n_1+n_2, n_1)=\gcd(n_2, n_1)=1$. This implies 
that $n_i\mid p_3$ for $i=1,2$. Since  $y_1$ and $y_3$ are non-major 
skew primitives of different weight, then the Hopf subalgebra $H'$ 
generated by $x^{\pm 1}, y_1, y_3$ satisfies the hypotheses in 
Theorem \ref{xxthm5.1}(a-d). 

Since $n_3=n_1+n_2>1$, the Hopf domain $H'$ is isomorphic to neither 
$N_5$ nor $N_7$. If $H'$ is isomorphic to $N_{10}$, then $n_3=2, 
n_1=1$, $p_1=10, p_3=5$ and $q_3=q_1^6$. Hence $q_2=q_3 q_1^{-1}=q_1^{5}$
has order $2$, contradicting $p_2=n_1p_1/n_2=p_1=10$.  If $H'$ is 
isomorphic to $N_{21}$, then $n_3=3, n_1=1$, $p_1=21, p_3=7$ and 
$q_3=q_1^{15}$. Hence $n_2=2$, contradicting $n_2p_2=n_1p_1=21$. 
Therefore the additional hypothesis in Theorem \ref{xxthm5.3} holds for 
$H'$. Applying Theorem \ref{xxthm5.3} to $H'$ we 
obtain that $q_1^{n_3}q_3^{n_1}=1$. Now
$$q_1^{2n_1}=q_1^{2n_1} (q_1^{n_2}q_2^{n_1})=q_1^{n_1+n_2}(q_1q_2)^{n_1}=
q_1^{n_3} q_3^{n_1}=1.$$
Since $q_1^{n_1}$ is a $p_1$st primitive root of unity, $p_1=2$. 
Since $p_1=n_3 \epsilon_3$ by Lemma \ref{xxlem5.2}(e) for $H'$,
$\epsilon_3=1$. This implies that $p_1=n_3=2$ and $p_3=n_1$ by
Lemma \ref{xxlem5.2}(e). Since $n_1+n_2=n_3=2$, we have $n_1=n_2=1$
and $q_1^{2}=1$ and $q_3^{1}=1$. This means that $y_3$ is a major
skew primitive element, a contradiction. 

Now we go back to prove the lemma.

(a) Suppose $n_1=n_2$. Then $n_1=n_2=1$ as $\gcd(n_1,n_2)=1$ and 
$q_1q_2=1$ by Theorem \ref{xxthm5.3}. By part (c) $p:=p_1=p_2>2$. 
By part (d), $y_2y_1-q_2 y_1y_2$ is a skew primitive in $C_0$.
Hence 
$$y_2y_1-q_2 y_1y_2=b(x^2-1)$$
for some $b\in k$. 
By \eqref{I5.2.1}, we have
$$y_1^p=y_2^p+a(x^{p}-1).$$
Pick $\alpha$ and $\beta$ such that
$$\alpha\beta (1-q_2)=-b, \quad \beta^p-\alpha^p=a.$$
Then 
$$(y_1+\alpha x)^p=(y_2+\beta x)^p-a$$
and
$$(y_2+\beta x)(y_1+\alpha x)-q_2(y_1+\alpha x)(y_2+\beta x)
=-b.$$
Thus the subalgebra $Y$ generated by $y_1+\alpha x$ and $y_2+\beta x$
has GK-dimension at most one. Since $Y$ is a domain, it is commutative by
\cite[Lemma 4.5]{GZ}. So $(y_2+\beta x)(y_1+\alpha x)=(q_2-1)^{-1}b$
or
$$y_2y_1+\alpha q_2 xy_2+\beta x y_1+\alpha \beta x^2=c$$
where $c=(q_1-1)^{-1}b$. After applying $\Delta$, we see that
$\{y_1,y_2,x,1\}$ are linearly dependent. 
This contradicts hypothesis (c) of Theorem \ref{xxthm5.1}.

(b) By part (d) $y_2 y_1-q_2^{n_1} y_1y_2$ is a skew 
primitive of weight $x^{n_1+n_2}$ and in $C_0$. So we have
$$y_2y_1-q_2^{n_1}y_1y_2=a(x^{n_1+n_2}-1)$$
for some $a\in k$. If $a\neq 0$, by commuting with $x$, the above 
equation implies that $q_1 q_2=1$. By part (a), 
$n_1\neq n_2$, by symmetry, we may assume  $n_2>n_1\geq 1$.
Then together with $q_1^{n_2}q_2^{n_1}=1$ we have $q_1^{n_2-n_1}=1$.
Since $q_1$ is a $p_1$st root of unity and $p_1=n_2\epsilon$, 
$n_2\epsilon$ divides $n_2-n_1$. Since $\gcd(n_1,n_2)=1$, we obtain
$n_2=1$, a contradiction. Therefore $a=0$ and the assertion follows.
\end{proof}

We are now ready to prove Theorem \ref{xxthm5.1}.

\begin{proof}[Proof of Theorem \ref{xxthm5.1}] Suppose $H$ is not
isomorphic to any of $N_5,N_7,N_{10}$ and $N_{21}$. First we claim 
that $q_1^{n_2}=q_2^{n_1}=1$. By Lemma \ref{xxlem5.4}(b),
$$y_2y_1-q_2^{n_1}y_1y_2=0.$$
Now we see that all relations of $K:=K(\{p_1,p_2\},\{q_1,q_2\},
\{\alpha_1,\alpha_2\}, M)$ (where $M=p_1n_1$) as listed (I2.1.1)-(I2.1.6) 
are satisfied by $H$. Then $H$ is isomorphic to a quotient Hopf 
algebra of $K$. Since $H$ is a domain, by Lemma \ref{xxlem2.3}(b), 
$q_j^{n_i}=1$.

Since $q_j^{n_i}=1$ for all $i\neq j$, $p_j\mid n_i$ for $i\neq j$. 
Thus $\gcd(p_1,p_2)=\gcd(n_2,n_1)=1$. Hence 
$K$ is in fact the algebra $B(1,\{p_i\}_{1}^2,q,\{\alpha_i\}_{1}^2)$ 
by Proposition \ref{xxprop2.4} and Convention \ref{xxcon2.5}. 
So we have a surjective algebra map from 
$B(1,\{p_i\}_{1}^2,q,\{\alpha_i\}_{1}^2)$ 
to $H$ between two Hopf domains of GK-dimension two. This map must 
be an isomorphism.
\end{proof}

\section{Proof of Theorem \ref{xxthm0.1} and Corollary \ref{xxcor0.2}}
\label{xxsec6}

In this final section we put together a proof of Theorem
\ref{xxthm0.1} and Corollary \ref{xxcor0.2}.

\begin{definition}
\label{xxdefn6.1} A Hopf algebra $H$ is said to satisfy the hypothesis
$\Omega$ if it does not contain any of $N_5, N_7, 
N_{10}$ and $N_{21}$ as a Hopf subalgebra.
\end{definition}

\begin{theorem}
\label{xxthm6.2}
Let $H$ be a pointed Hopf domain of GK-dimension strictly less 
than three. Suppose that $H$ is finitely generated by grouplike and
skew primitive elements and that $H$ satisfies $\Omega$. If the 
coradical $C_0$ has GK-dimension one, then $H$ is isomorphic to 
either $k{\mathbb Z}$, or one of algebras in Theorem 
\ref{xxthm1.4}(III,IV,V) or the algebra 
$B(n,\{p_i\}_{1}^s,q,\{\alpha_i\}_{1}^s)$. 
\end{theorem}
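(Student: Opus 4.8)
The plan is to strip away the easy cases with the classifications already in hand, reduce to a minimal homogeneous generating set, analyse the skew primitive generators two at a time via Theorem \ref{xxthm5.1} and Heckenberger's rank--two list, and finally recognise $H$ as a quotient of a $K$--algebra. Since $H$ is finitely generated by grouplike and skew primitive elements it is affine, so $C_0$ is finitely generated (Lemma \ref{xxlem1.10}); as $C_0=kG$ with $G$ abelian, torsionfree, of rank one, we get $C_0=k\mathbb Z=k[x^{\pm1}]$ for a grouplike $x$. If $\GKdim H<2$ then $\GKdim H=1=\GKdim C_0$, and Lemma \ref{xxlem1.6} gives $H=C_0=k\mathbb Z$. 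If $H$ is not PI, Corollary \ref{xxcor1.12} leaves only $A(n,q)$ with $q$ not a root of unity or $C(n)$, i.e.\ Theorem \ref{xxthm1.4}(III),(V), the enveloping--algebra case being excluded by $\GKdim C_0=1$. So henceforth $\GKdim H=2$, $H$ is PI, and $H=k\langle x^{\pm1},y_1,\dots,y_s\rangle$ with the $y_i$ nontrivial skew primitive; I take this generating set of minimal cardinality, so that the $y_i$ are linearly independent in the finite-dimensional space $P'_T$ (Corollary \ref{xxcor3.10}).

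By Theorem \ref{xxthm3.7} each $P'_{\mu(y_i),\gamma(y_i),1}$ has dimension at most one, so Lemma \ref{xxlem3.8} (with $G_0=\langle x\rangle$) lets me replace $y_i$ by a homogeneous representative: $y_ix=q_ixy_i$, $\mu(y_i)=x^{n_i}$, $\gamma(y_i)=q_i^{n_i}=:\lambda_i$. Since $H$ is PI no $\gamma(y_i)$ can fail to be a root of unity (otherwise $\langle x^{\pm1},y_i\rangle$ contains a non-PI quantum plane), and a primitive generator ($n_i=0$) would force $H=\langle x^{\pm1},y_i\rangle\cong A(0,q_i)$ of type III. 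So I may assume all $n_i\neq0$ and each $\lambda_i\in\{1\}\cup\sqrt{\;}$. If all $\lambda_i=1$ then every $y_i$ lies in the one-dimensional $P'_M$, so minimality forces $s\le1$ and $H$ is $k\mathbb Z$ or an $A(n_1,q_1)$. Otherwise some $\lambda_i$ is a primitive $p_i$-th root of unity with $p_i\ge2$; by Lemma \ref{xxlem3.3}(a) the major commutator is $1$, $y_i^{p_i}$ is the essentially unique major skew primitive, and $n_ip_i=M$. A generator $y_j$ with $\lambda_j=1$ would then be major, hence lie in $\langle x^{\pm1},y_i\rangle$, contradicting minimality unless we again drop to type III. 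Thus I may assume $s\ge2$, each $\lambda_i$ is a primitive $p_i$-th root of unity with $p_i\ge2$, $n_ip_i=M\ge2$, and (after possibly replacing $x$ by $x^{-1}$) $n_i>0$. Finally $y_i^{p_i}\in P_{x^M,1,1}=k(x^M-1)\oplus k\,y_1^{p_1}$, so after rescaling I may take $y_i^{p_i}=y_1^{p_1}+\alpha_i(x^M-1)$ with $\alpha_1=0$, giving the relations (I2.0.12); commuting these with $x$ yields $q_i^{p_i}=q_1^{p_1}$.

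The heart of the matter is to show that the $y_i$ pairwise commute, that the $p_i$ are pairwise coprime, and that each $q_i$ has order exactly $p_i$. Fix $i\neq j$, put $d=\gcd(n_i,n_j)$, and let $H_{ij}$ be the Hopf subalgebra generated by $x^{\pm d},y_i,y_j$. Using the monomial basis of $H$ one checks: $C_0(H_{ij})=k[x^{\pm d}]\cong k\mathbb Z$; $\GKdim H_{ij}=2$ (else Lemma \ref{xxlem1.6} would give $H_{ij}=k[x^{\pm d}]\not\ni y_i$); $y_i,y_j$ are skew primitive over $x^{\pm d}$ of weights $(x^d)^{n_i/d},(x^d)^{n_j/d}$ with coprime exponents; and $H_{ij}$ is not generated by $x^{\pm d}$ together with either one of $y_i,y_j$ alone (lest one of them lie in $\langle x^{\pm1},y_i\rangle$ or $\langle x^{\pm1},y_j\rangle$, against minimality). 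A Hopf subalgebra of $H_{ij}$ being one of $H$, the hypothesis $\Omega$ descends to $H_{ij}$, so Theorem \ref{xxthm5.1} applies and yields $H_{ij}\cong B(1,\{p_i,p_j\},q',\{\alpha'\})$; in particular $y_iy_j=y_jy_i$, $\gcd(p_i,p_j)=1$, and comparing presentations gives $q_i^{d}=(q')^{p_j}$ and $q_i^{n_j}=(q')^{p_ip_j}=1$ with $q'$ a primitive $p_ip_j$-th root of unity. Letting $j$ range over all indices $\neq i$ we obtain $y_iy_j=y_jy_i$ for all $i,j$, the $p_i$ pairwise coprime, $q_i^{n_j}=1$ for $j\neq i$, and --- together with $\gamma(y_i)=q_i^{n_i}$ of order $p_i$ and $q_i^{p_i}=q_1^{p_1}$ --- that each $q_i$ is a primitive $p_i$-th root of unity and $q_1^{p_1}=1$.

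At this point all of (I2.1.1)--(I2.1.6) hold in $H$, so $H$ is a quotient Hopf algebra of $K:=K(\{p_i\},\{q_i\},\{\alpha_i\},M)$; since $H$ is a domain of GK-dimension two, Lemma \ref{xxlem2.3}(b) and Proposition \ref{xxprop2.4} show $K$ is itself a domain, namely $B(n,\{p_i\}_1^s,q,\{\alpha_i\}_1^s)$ for $n=M/(p_1\cdots p_s)$ and a suitable root of unity $q$ with $q_i=q^{m_i}$ (Convention \ref{xxcon2.5}), and the Hopf surjection $B(n,\{p_i\}_1^s,q,\{\alpha_i\}_1^s)\twoheadrightarrow H$ of domains of equal GK-dimension two is an isomorphism. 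The step I expect to be hardest is the passage to the $H_{ij}$ in the third paragraph: confirming $C_0(H_{ij})$ is exactly $k[x^{\pm d}]$ rather than some $k[x^{\pm d'}]$ with $d'\mid d$, verifying all hypotheses of Theorem \ref{xxthm5.1} there, and then patching the pairwise data into one presentation while controlling the orders of the $q_i$ --- Theorem \ref{xxthm5.1} and Proposition \ref{xxprop4.2} apply only for coprime weight exponents, so this detour is forced, and it is precisely there that $\Omega$ is consumed, ruling out $N_5,N_7,N_{10},N_{21}$ inside each $H_{ij}$.
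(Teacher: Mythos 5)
Your overall strategy coincides with the paper's: reduce to the PI, $\GKdim H=2$ case with $C_0=k[x^{\pm1}]$, normalize the skew primitive generators via Lemma \ref{xxlem3.8}, apply Theorem \ref{xxthm5.1} to each pair $y_i,y_j$ inside the Hopf subalgebra generated by $x^{\pm\gcd(n_i,n_j)},y_i,y_j$, and then read off a presentation. The gap is in your endgame, specifically the sentence claiming that the pairwise data ``together with $\gamma(y_i)=q_i^{n_i}$ of order $p_i$ and $q_i^{p_i}=q_1^{p_1}$'' yields that \emph{each $q_i$ is a primitive $p_i$-th root of unity and $q_1^{p_1}=1$}. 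This does not follow from what you have established, and it is false in general. The information you actually have is $q_i^{n_j}=1$ for $j\neq i$ and $q_i^{d}=(q')^{\pm p_j}$ with $d=\gcd(n_i,n_j)$, which only bounds the order of $q_i$ by a multiple of $p_i$. A concrete counterexample is the Goodearl--Zhang algebra $B(n,p_0,p_1,\dots,p_s,q)$ of Example \ref{xxex1.2} with $p_0$ a \emph{proper} divisor of $n$: it satisfies every hypothesis of Theorem \ref{xxthm6.2} (pointed domain, $\GKdim=2$, $C_0=k\mathbb Z$, generated by grouplikes and skew primitives, commuting $y_i$'s so $\Omega$ holds), all $\alpha_i=0$, yet the homogeneous generators satisfy $x^{-1}y_ix=q^{\pm m_i}y_i$ where $q^{m_i}$ has order $(n/p_0)p_i\neq p_i$, so $q_i^{p_i}\neq 1$. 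For such an $H$ condition (I2.0.4) fails, so $H$ is \emph{not} a quotient of any $K(\{p_i\},\{q_i\},\{\alpha_i\},M)$, and your final identification breaks down; worse, these algebras must appear in the conclusion (they are type (IV) of Theorem \ref{xxthm1.4}) and your case analysis never produces them.

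The correct way to close this, and what the paper does, is to split according to the $\alpha_i$. Commuting $x$ with $y_i^{p_i}=y_1^{p_1}+\alpha_i(x^M-1)$ and comparing the coefficient of $x^M-1$ gives $q_i^{p_i}\alpha_i=\alpha_i$, so $q_i^{p_i}=q_1^{p_1}=1$ \emph{only when some $\alpha_i\neq0$}; in that case your identification with $B(n,\{p_i\}_1^s,q,\{\alpha_i\}_1^s)$ goes through. When all $\alpha_i=0$ one cannot control the order of $q_i$ this way; instead the paper shows $H=Y[x^{\pm1};\sigma]$ with $Y\subset k[y^{\pm1}]$, deduces $H/[H,H]\cong k[x^{\pm1}]$ so that \nat\ holds, and then invokes the classification Theorem \ref{xxthm1.4} to land in family (IV), which is strictly larger than the family covered by Convention \ref{xxcon2.5}. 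You need this extra branch (or an equivalent argument) for the statement to come out right. The remainder of your proposal --- the reduction steps, the use of $\Omega$ to rule out $N_5,N_7,N_{10},N_{21}$ in each $H_{ij}$, and the derivation of $y_iy_j=y_jy_i$ and $\gcd(p_i,p_j)=1$ --- matches the paper's argument and is sound.
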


\begin{proof} If $H$ is not PI, the assertion follows from 
Corollary \ref{xxcor1.12}. So we may assume $H$ is  PI. Then 
$\GKdim H$ is an integer, either 1 or 2. If $\GKdim H=1$,
the assertion follows from \cite[Proposition 2.1]{GZ} together with 
Lemmas \ref{xxlem1.6} and \ref{xxlem1.10}. It remains to consider
the case $\GKdim H=2$ and $H$ is PI. By Lemma \ref{xxlem1.10},
$C_0$ is affine. Since $C_0=k\Gamma$ where $\Gamma$
is an abelian torsionfree group rank 1, $\Gamma$ is isomorphic 
to ${\mathbb Z}$. Let $x$ be a generator of $C_0$. By Lemma 
\ref{xxlem3.3}(c), $\dim P'_{M}=1$. Note that $H$ is generated by
$C_0$ and preimages of $P'_{T}$. 

Let $s=\dim P'_{T}-\dim P'_{M}$. By Theorem \ref{xxthm3.9},
$s$ is finite. 

If $s=0$, then only nontrivial
skew primitive element $z$ is in $P'_{M}$. So $H$ is generated by
$x^{\pm 1},z$. Suppose $\omega(z)=(x^M, \lambda)$ and without loss of 
generality $M\geq 0$. If $\lambda\neq 1$, then 
we may further assume $x^{-1} z x=q z$ by Lemma \ref{xxlem3.8}
and $\lambda=q^M$. Therefore there is a Hopf surjective map $A(M,q)
\to H$. This is an isomorphism since both algebras are domains of
GK-dimension two. If $\lambda=1$, then either $x^{-1} z x=qz$ or 
$x^{-1} z x=z+(x^M-1)$.  We have already dealt with the first case, 
and the second case forces $H$ to isomorphic to $C(M-1)$ defined 
in Example \ref{xxex1.3}, which is non-PI. 

Note that when $s=0$, $H$ is non-PI. Hence, under the
hypothesis that $H$ is PI (see the first paragraph of
the proof), $s>0$. 

If $s=1$, let $y$ be a nontrivial non-major skew primitive
with $\omega(y)=(x^n, \lambda)$ for some $n>0$. Then $\lambda$
is a $p$th primitive root of unity for some $p\geq 2$ and $y^p$ 
is a  nontrivial major skew primitive. So $H$ is generated by 
$x^{\pm 1}$ and $y$. An argument similar to above shows that $H\cong 
A(n,q)$ for some $q\in k^{\times}$. 

Suppose now $s\geq 2$. Pick any two linearly independent nontrivial 
non-major skew primitive elements, say $y_1, y_2$, with $\omega(y_i)
=(x^{n_i}, \lambda_i)$. Let $X=x^{n}$ where $n=\gcd(n_1,n_2)$ and
let $K$ be the Hopf subalgebra generated by $X^{\pm 1}$, $y_1$ and 
$y_2$. Since $\lambda_i\neq 1$, $K$ is noncommutative. Therefore
$\GKdim K=2$ by \cite[Lemma 4.5]{GZ}. By Lemma \ref{xxlem1.10}
and its proof, the coradical $C_0(K)$ is generated by $X^{\pm 1}$. In
$K$, $\omega(y_i)=(X^{n'_i}, \lambda_i)$ where $n'_i=n_i/n$. Thus
$\gcd(n'_1,n'_2)=1$. Therefore $K$ satisfies all hypotheses in
Theorem \ref{xxthm5.1}. As a consequence, $y_1y_2=y_2y_1$. 
Other consequences are
\begin{enumerate}
\item[(I6.2.1)]
any major skew primitive element is generated by $C_0$ and $y_1$,
\item[(I6.2.2)]
both $n_1$ and $n_2$ are positive,
\item[(I6.2.3)]
$n_1p_1=n_2p_2$ and $\gcd(p_1,p_2)=1$ where $p_i$ is the order of $\lambda_i$, 
\item[(I6.2.3)]
$y_2^{p_2}=y_1^{p_1}+\alpha_2(x^M-1)$, where 
$y_1^{p_1}$ is a major skew primitive of weight $x^M$. 
\end{enumerate}

Re-cycling the notations $n_i$ etc. For each nontrivial 
non-major $P_{x^{n_i}, \lambda_i,*}$, there is a 
$y_i\in P_{x^{n_i}, \lambda_i,*}\setminus C_0$, unique up to a scalar
multiple by Lemma \ref{xxlem3.8}(a), such that 
\begin{equation}
\label{I6.2.4}\tag{I6.2.4}
x^{-1} y_ix=q_i y_i.
\end{equation}
Then $\lambda_i=q_i^{n_i}$. By (I6.2.1), $H$ is generated by
$x^{\pm 1}, y_1,\cdots, y_s$, and
\begin{enumerate}
\item
every $n_i$ is positive,
\item
$n_1p_1=n_2p_2=\cdots=n_sp_s=:M$ where $p_i$ is the order of $\lambda_i$, 
\item
$y_i^{p_i}=y_1^{p_1}+\alpha_i(x^M-1)$, where $\alpha_i\in k$.
\end{enumerate}
By commuting $x$ with the equation in (c) above, one sees that 
$q_1^{p_1}=q_i^{p_i}$. Note that 
$\Delta(y_i)=y_i\otimes 1+x^{n_i}\otimes y_i$. Since 
$y_iy_j=y_jy_i$, expanding $\Delta(y_iy_j)=\Delta(y_jy_i)$ shows
that $q_i^{n_j}=q_j^{n_i}=1$. 

If all $\alpha_i=0$, consider the subalgebra $Y$ generated by $y_i$, 
which is a commutative algebra 
$k[y_1,\cdots, y_s]/(y_i^{p_i}=y_1^{p_1}\mid i\geq 2)$. By the proof of 
\cite[Construction 1.2]{GZ}, $Y$ is isomorphic to a subalgebra of
$k[y,y^{-1}]$ by identifying $y_i$ with $y^{m_i}$.  Similar to
the proof of Proposition \ref{xxprop2.4}, one see that $q_i=q^{m_i}$
and that $H=Y[x,x^{-1};\sigma]$ for some graded algebra automorphism
$\sigma$ of $Y$. As a consequence, $H/[H,H]=k[x^{\pm 1}]$. By
\cite[Theorem 3.8(c)]{GZ}, \nat\; holds. By Theorem \ref{xxthm1.4}, 
$H$ is isomorphic to the algebra $B(n,p_0,\dots,p_s,q)$ defined 
in Example \ref{xxex1.2}.

If some $\alpha_i\neq 0$ (and assume $\alpha_1=0$), then $q_1^{p_1}=q_i^{p_i}
=1$ for all $i$. Thus $q_i$ is also a $p_i$th primitive root of unity.
Then all conditions (I2.0.1)-(I2.0.8) are verified and all relations
(I2.0.9)-(I2.0.12) hold in $H$. By Proposition \ref{xxprop2.4} and Convention 
\ref{xxcon2.5}, the algebra with relations (I2.0.9)-(I2.0.12) is 
$B(n,\{p_i\}_{1}^s,q,\{\alpha_i\}_{1}^s)$. Thus there is a Hopf algebra 
surjective map from $B(n,\{p_i\}_{1}^s,q,\{\alpha_i\}_{1}^s)\to H$ which 
must be an isomorphism since both algebras are domains of GK-dimension 
two. Therefore the assertion follows. 
\end{proof}

\begin{proof}[Proof of Theorem \ref{xxthm0.1} and 
Corollary \ref{xxcor0.2}] 
Since all algebras in Theorem \ref{xxthm1.4} satisfy the condition
$\Ext^1_H(k,k) \neq 0$, Theorem \ref{xxthm0.1} and 
Corollary \ref{xxcor0.2} are equivalent. We will basically
prove Corollary \ref{xxcor0.2}. 

As the discussion given in subsection \ref{xxsec1.2}, $\GKdim C_0$
is either 0, 1, or 2. If $\GKdim C_0=0$, the assertion follows
from Theorem \ref{xxthm1.9}. If $\GKdim C_0=2$, then the assertion 
follows from Theorem \ref{xxthm1.7}. 

It remains to deal with the case that $\GKdim C_0=1$. It is clear 
that the hypothesis $\Omega$ follows from the hypothesis $\Omega'$. 
Hence we may apply Theorem \ref{xxthm6.2}. By Theorem \ref{xxthm6.2}
and by the fact $\GKdim H=2$, $H$ is isomorphic to one of algebras 
in Theorem \ref{xxthm1.4}(III,IV,V) or the algebra 
$B(n,\{p_i\}_{1}^s,q,\{\alpha_i\}_{1}^s)$. When $H$ is isomorphic to
$B(n,\{p_i\}_{1}^s,q,\{\alpha_i\}_{1}^s)$ and $\Ext^1_H(k,k)=0$, 
Lemma \ref{xxlem2.3}(c) says that $\alpha_i\neq \alpha_j$ for some 
$i$ and $j$. This finishes the proof. 
\end{proof}

\providecommand{\bysame}{\leavevmode\hbox to3em{\hrulefill}\thinspace}
\providecommand{\MR}{\relax\ifhmode\unskip\space\fi MR }
\providecommand{\MRhref}[2]{%

\href{http://www.ams.org/mathscinet-getitem?mr=#1}{#2} }
\providecommand{\href}[2]{#2}

\end{document}